\newtheorem{theorem}{Theorem}
\newtheorem*{nonumtheorem}{Theorem}
\theoremstyle{remark}
\newtheorem{remark}{Remark}
\newtheorem{example}{Example}
\theoremstyle{plain}
\newtheorem*{acknowledgement}{Acknowledgement}
\newtheorem{corollary}[theorem]{Corollary}
\newtheorem{definition}[theorem]{Definition}
\newtheorem{lemma}[theorem]{Lemma}
\newtheorem{proposition}[theorem]{Proposition}
\numberwithin{equation}{section}
\begin{document}
\title[On the residue symbol]{On the local residue symbol\linebreak in the style of Tate and Beilinson}
\author{Oliver Braunling}
\address{Albert Ludwig University of Freiburg, Eckerstra\ss e 1, D-79104 Freiburg, Germany}
\email{oliver.braeunling@math.uni-freiburg.de}
\thanks{This work has been partially supported by the DFG SFB/TR45 \textquotedblleft%
{Periods, moduli spaces, and arithmetic of algebraic varieties}%
\textquotedblright\ and the Alexander von Humboldt Stiftung.}

\begin{abstract}
Tate gave a famous construction of the residue symbol on curves by using some
non-commutative operator algebra in the context of algebraic geometry. We
explain Beilinson's multidimensional generalization, which is not so
well-documented in the literature. We provide a new approach using Hochschild homology.

\end{abstract}
\maketitle

Suppose $X/k$ is a smooth proper algebraic curve over a field. One can define
the residue of a rational $1$-form $\omega$ at a closed point $x$ as
\begin{equation}
\operatorname*{res}\nolimits_{x}\omega=\operatorname*{Tr}\nolimits_{\kappa
(x)/k}a_{-1}\text{,}\qquad\text{where}\qquad\omega=\sum a_{i}t^{i}%
\ \mathrm{d}t\label{la3}%
\end{equation}
in terms of a local coordinate $t$, i.e. by picking an isomorphism
$\operatorname*{Frac}\widehat{\mathcal{O}}_{X,x}\simeq\kappa(x)((t))$. This
works, but is unwieldy since it depends on the choice of the isomorphism and
one needs to prove that it is well-defined, cf. Serre \cite[Ch. II]{serrecft}.
One could ask for a bit more:\medskip

\textbf{Aim:}\ Construct the local residue symbol without ever needing to
choose coordinates.\medskip

J. Tate \cite{MR0227171} has pioneered an approach which circumvents choices
of coordinates at all times by employing ideas in the style of functional
analysis: The local field%
\begin{equation}
\widehat{\mathcal{K}}_{X,x}:=\operatorname*{Frac}\widehat{\mathcal{O}}%
_{X,x}=\underset{s\neq0}{\underrightarrow{\operatorname*{colim}}}\underset
{i}{\underleftarrow{\lim}}\,\mathcal{O}_{X,x}/\mathfrak{m}_{x}^{i}\left\langle
s^{-1}\right\rangle \label{la20}%
\end{equation}
carries a canonical topology, defined by viewing it as an ind-pro limit of
finite-dimensional discrete $k$-vector spaces. This topology needs no
assumptions on the base field, e.g. it could be just a finite field. We get a
non-commutative algebra of continuous $k$-vector space endomorphisms $E$. Via
multiplication operators $x\mapsto f\cdot x$ the functions $f\in
\widehat{\mathcal{K}}_{X,x}$ embed into $E$. Using the ideal of compact
operators, Tate shows that $E$ has a canonical central extension $\widehat{E}$
as a Lie algebra by a formal element \textsf{c} so that%
\begin{equation}
\lbrack f,g]_{\widehat{E}}=\operatorname*{res}\nolimits_{x}f\mathrm{d}%
g\cdot\text{\textsf{c}.}\label{la2}%
\end{equation}
Tate now uses the left-hand side as an intrinsically coordinate-independent
definition for the residue (R. Hartshorne advertises this as `clever' in his
textbook \cite[Ch. III, \S 7]{MR0463157}). For an $n$-dimensional smooth
proper algebraic variety $X/k$, the global residue%
\[
H^{n}(X,\Omega_{X/k}^{n})\longrightarrow k
\]
is induced from $n$-dimensional local residue symbols. There is the
conventional approach to this using A. Grothendieck's residue symbol
\cite{MR0222093}, however A. Beilinson \cite{MR565095} has shown that one can
also describe this map by a beautiful multidimensional generalization of
Tate's approach. He interprets the commutators which appear in Tate's theory
as low-degree avatars of the differential in Lie homology. As such, one can
give an explicit formula for the higher residue in terms of cascading
commutators, roughly generalizing Eq. \ref{la2}.

\subsection{\label{minisect_IntroduceMainResult}The results}

Beilinson's theory of ad\`{e}les allows us to reduce the construction of his
residue to a certain axiomatic structure, which we will call a \emph{cubically
decomposed algebra} $A^{n}$ (it does not carry a name in Beilinson's paper
\cite{MR565095}). It then all boils down to constructing a functional on Lie
homology%
\[
\phi_{Beil}:\quad H_{n+1}((A^{n})_{Lie},k)\longrightarrow k\text{.}%
\]
This is done via homological algebra $-$ a concrete determination of the map
leads to a differential on the $(n+1)$-st page of a certain spectral sequence.
It is a notoriously unpleasant problem to make such maps explicit; also this
formulation is quite exotic among other treatments of residues, making it
difficult to compare them.

We propose a new viewpoint: Under mild assumptions, we exhibit $A^{n}$ as an
iterated algebra extension of simpler cubically decomposed algebras $A^{i}$.
Now define%
\begin{equation}
\phi_{C}:\quad HC_{n}(A^{n})\overset{\delta\Lambda}{\longrightarrow}%
HC_{n-1}(A^{n-1})\overset{\delta\Lambda}{\longrightarrow}\cdots\overset
{\delta\Lambda}{\longrightarrow}HC_{0}(A^{0})\longrightarrow k\text{,}%
\label{lEqPhiC}%
\end{equation}
where $\Lambda:A^{n}\rightarrow A^{n}/A^{n-1}$ is a kind of Toeplitz operator
mechanism, and $\delta$ a connecting homomorphism $\delta:HC_{\ast}%
(A^{n}/A^{n-1})\rightarrow HC_{\ast-1}(A^{n-1})$ coming from the algebra
extension $A^{n-1}\hookrightarrow A^{n}\twoheadrightarrow A^{n}/A^{n-1}$.
Modulo maps relating Lie with Hochschild/cyclic homology and identifying
differential forms along $\Omega_{R/k}^{n}\simeq HH_{n}(R)$, the two
constructions produce the same map. The main idea is to view the Lie homology
as the Hodge $n$-part of Hochschild homology and get rid of relative Lie
homology by Hochschild excision, see \S \ref{section_Etiology} for a detailed
explanation. Concretely:

\begin{nonumtheorem}
[Lie-to-Hochschild Comparison]Suppose $A$ is a unital $n$-fold cubically
decomposed algebra over $k$ which has local units on all levels. Let
$\mathfrak{g}$ denote its Lie algebra. Then there are canonical maps, making
the diagram%
\[%
\xymatrix@R=0.1in
{
& H_{n}(\mathfrak{g},\mathfrak{g}) \ar[dl] \ar[dr] \\
H_{n+1}(\mathfrak{g},k) \ar[dddr]_{{\phi}_{Beil}} \ar[dr] & & HH_{n}%
(A) \ar[dl] \ar[dddl]^{{\phi}_{HH}} \\
& HC_{n}(A) \ar[dd]_{{\phi}_{C}} \\
\\
& k
}%
\]
commutative. Here $\phi_{Beil}$ is Beilinson's construction in \cite{MR565095}%
, and $\phi_{HH}$ and $\phi_{C}$ are constructed in this paper; $\phi_{C}$
being as in Line \ref{lEqPhiC}.
\end{nonumtheorem}

This is glued from the triangles of Cor. \ref{marker_CorCompat1} and Cor.
\ref{marker_AllComparisonDiagramCor}. Applied to the concrete task of
describing residues, this leads to our Local Formula, Theorem
\ref{thm_localformula}, unravelling all these maps in concrete terms, once
local coordinates are chosen. This is our multi-dimensional generalization of
Equation \ref{la2}.\medskip

We also give our own interpretation of the Lie homology mechanism in
Beilinson's \cite{MR565095} in \S \ref{section_Etiology}. No such attempt of
an explanation seems to exist in the literature, and\ I hope that future
readers of \cite{MR565095} will find this helpful.\medskip

We also give our own version of a reciprocity-like vanishing theorem. I tried
to find the correct formulation of such a result on the level of cubically
decomposed algebras. The `abstract residue formula' of Arbarello, de Concini
and Kac \cite[\S 2]{MR1013132} may be regarded as its ancestor.

\begin{nonumtheorem}
[Cube Reciprocity\ Law]Let $A$ be a unital $n$-fold cubically decomposed
algebra with local units on all levels. Let $P^{\pm}\in A$ be idempotents such
that%
\[
P^{+}+P^{-}=1\qquad\text{and}\qquad P^{\pm}A\in I_{1}^{\pm}\text{.}%
\]
If $R\subseteq A$ is a sub-algebra such that $P^{+}A$ (or $P^{-}A$) is a left
$R$-submodule of $A$, then for all $r\in HH_{n}(R)$:%
\[
\phi_{C}(r)=0\text{.}%
\]

\end{nonumtheorem}

See Theorem \ref{thm_multreclaw}. Applied to geometry, this result implies,
for example, the vanishing of the sum of residues of a rational $1$-form on an
integral proper curve. It is a possible abstraction and generalization of the
corresponding vanishing theorem in Tate's paper \cite{MR0227171}.

\section{\label{marker_sect_TateOriginalConstruction}Tate's original
construction}

\subsection{Operator ideals and the snake lemma}

We shall quickly recall the classical construction of Tate \cite{MR0227171},
from a perspective which points naturally to the multidimensional
generalization. Let $X/k$ be a smooth algebraic curve. For every closed point
$x\in X$, the completed stalk of the structure sheaf is a complete discrete
valuation ring with residue field $\kappa(x)$. By Cohen's Structure Theorem
there is an isomorphism%
\begin{equation}
\widehat{\mathcal{K}}_{X,x}:=\operatorname*{Frac}\widehat{\mathcal{O}}%
_{X,x}\simeq\kappa(x)((t))\text{,}\label{lbb1}%
\end{equation}
however there is no canonical isomorphism.

\begin{example}
[how not to do it]Attempting to construct the residue via a map%
\begin{equation}
\widehat{\mathcal{K}}_{X,x}\ni\sum a_{i}t^{i}\mapsto a_{-1}\in\kappa\left(
x\right) \label{lbb2}%
\end{equation}
quickly leads us into trouble. One could use expansions in $(t+t^{2})$ instead
of $t$ for example, or any other isomorphism in Eq. \ref{lbb1}. Even worse,
there is no canonical copy of $\kappa(x)$ inside $\widehat{\mathcal{K}}_{X,x}%
$:\ Suppose the residue field is $\mathbf{Q}(s)$ so that $\widehat
{\mathcal{O}}_{X,x}=\mathbf{Q}(s)[[t]]$. Then $\mathbf{Q}(s+t)\subset
\widehat{\mathcal{O}}_{X,x}$ is a subfield which is mapped isomorphically to
$\mathbf{Q}(s)$ modulo the maximal ideal $\mathfrak{m}_{x}=(t)$. Thus, we get
isomorphisms%
\[
\widehat{\mathcal{K}}_{X,x}\simeq\mathbf{Q}(s)((t))\qquad\text{and}%
\qquad\widehat{\mathcal{K}}_{X,x}\simeq\mathbf{Q}(s+t)((t))\text{,}%
\]
both of which qualify for the residue definition as in Eq. \ref{lbb2}. Hence,
both the choice of a coefficient field and the choice of a uniformizing
variable $t$ are non-canonical. Only in exceptional situations one \emph{does}
have a canonical coefficient field, notably if the residue field is perfect of
positive characteristic or algebraic over the rationals \cite[Ch. II
\S 5.2-5.4]{MR1915966}.
\end{example}

Without needing to choose such an isomorphism, $\widehat{\mathcal{K}}_{X,x}$
has a canonical topology coming from the presentation $\widehat{\mathcal{K}%
}_{X,x}=\underrightarrow{\lim}_{s}\underleftarrow{\lim}_{i}\mathcal{O}%
_{X,x}/\mathfrak{m}_{x}^{i}\left\langle \frac{1}{s}\right\rangle $, where we
regard each $\mathcal{O}_{X,x}/\mathfrak{m}_{x}^{i}$ as a \emph{discrete}
$k$-vector space. This turns the inner pro-limit into a linearly compact
$k$-vector space and the ind-limit over all finitely generated $\widehat
{\mathcal{O}}_{X,x}$-submodules of $\widehat{\mathcal{K}}_{X,x}$ into a
linearly locally compact $k$-vector space.

We can now regard $\widehat{\mathcal{K}}_{X,x}$ as a infinite-dimensional
topological $k$-vector space. The topology differs from the ones
conventionally used in functional analysis over $\mathbf{R}$ or $\mathbf{C}$
because it is generated from an open neighbourhood basis of $0$ which consists
of linear subspaces; they are called \emph{lattices}:

\begin{definition}
\label{def_onedim_lattice}A \emph{lattice} in a finite-dimensional
$\widehat{\mathcal{K}}_{X,x}$-vector space $V$ is a finitely generated
$\widehat{\mathcal{O}}_{X,x}$-submodule $L\subseteq V$ so that $\widehat
{\mathcal{K}}_{X,x}\cdot L=V$.
\end{definition}

Using the topology, we get the associative operator algebra of continuous
$k$-linear endomorphisms%
\begin{equation}
E:=\{\phi:\widehat{\mathcal{K}}_{X,x}\rightarrow\widehat{\mathcal{K}}%
_{X,x}\mid\phi\text{ is }k\text{-linear and continuous}\}\text{.}\label{la21}%
\end{equation}

\begin{definition}
\label{def_onedim_compactanddiscrete}We call an operator $\phi\in E$

\begin{enumerate}
\item \emph{compact} if there is a lattice $L$ with $\operatorname*{im}%
\phi\subseteq L$;

\item \emph{discrete} if there is a lattice $L$ with $L\subseteq\ker\phi$.
\end{enumerate}
\end{definition}

These classes of operators form two-sided ideals $I^{+},I^{-}$ in $E$.
Moreover, we have $I^{+}+I^{-}=E$. Write $I_{tr}:=I^{+}\cap I^{-}$ for their
intersection. Thus, we get a short exact sequence of $E$-bimodules,%
\begin{equation}
0\longrightarrow I_{tr}\longrightarrow I^{+}\oplus I^{-}\longrightarrow
E\longrightarrow0\text{.}\label{la22}%
\end{equation}
We may formally \textquotedblleft exterior tensor\textquotedblright\ this with
another copy of $E$, giving a commutative diagram with exact rows:%
\begin{equation}
\xymatrix{ 0 \ar[r] & {(I^{+}\wedge E)\cap (I^{-}\wedge E)} \ar[r] \ar[d]^{[-,-]} & \left( I^{+}\wedge E\right) \oplus \left( I^{-}\wedge E\right) \ar[r] \ar[d]^{[-,-]} & E\wedge E \ar[r] \ar[d]^{[-,-]} & 0 \\ 0 \ar[r] & I_{tr} \ar[r] & I^{+}\oplus I^{-} \ar[r] & E \ar[r] & 0 }\label{la6}%
\end{equation}
(for $V\subseteq W$ a subspace of a vector space, $V\wedge W$ denotes the
subspace of $%
{\textstyle\bigwedge\nolimits^{2}}
W$ generated by vectors $v\wedge w$ with $v\in V,w\in W$.) The snake lemma
gives us a canonical morphism, call it $(\ast)$, and thus%
\begin{equation}
\phi:\widehat{\mathcal{K}}_{X,x}\wedge\widehat{\mathcal{K}}_{X,x}%
\longrightarrow\ker(E\wedge E\rightarrow E)\overset{(\ast)}{\longrightarrow
}\operatorname*{coker}([\ldots]\rightarrow I_{tr})\overset{\operatorname*{tr}%
}{\longrightarrow}k\text{.}\label{la7}%
\end{equation}
The local rational functions $\widehat{\mathcal{K}}_{X,x}\subset E$ are viewed
as the respective multiplication operator $x\mapsto f\cdot x$, which is
clearly continuous. Functions commute, i.e. $[f,g]=0$, so the left-hand side
arrow indeed exists. On the other hand, traces satisfy $\operatorname*{tr}%
([X,Y])=0$, so the trace on the right-hand side factors through the cokernel.
Tate now proves that $\phi(f\wedge g)=\operatorname*{res}\nolimits_{x}%
f\mathrm{d}g$. See Lemma \ref{Lemma_LocalTateFormulaDimOne} for the proof.
\cite[\S 2]{MR0227171}.

\begin{remark}
Tate's original paper \cite{MR0227171} actually defines $I^{+},I^{-}$ (called
$E_{1},E_{2}$ in \emph{loc. cit.}) slightly differently. He fixes a special
open, the ring of integers $\widehat{\mathcal{O}}_{X,x}\subset\widehat
{\mathcal{K}}_{X,x}$, and instead of compactness he demands an operator to map
the entire space into this open, up to a finite-dimensional discrepancy. See
also Definition \ref{def_HigherAdeleOperatorIdeals}. But this comes down to
the same as the topological definition we use here. The presentation using a
topological language is taken from \cite[\S 1.2]{BFM_Conformal} ($I^{+},I^{-}$
are called $Hom_{+},Hom_{-}$ in \emph{loc. cit.}).
\end{remark}

\subsection{Finite-potent trace}

We have tacitly swept a detail under the rug: Since $E$ is
infinite-dimensional, a general operator in $E$ will not have a well-defined
trace. Clearly finite-rank operators will still have a trace, but in Tate's
construction the operators in $I_{tr}$ a priori need not be of finite rank. In
functional analysis one would now hope for the ideal of nuclear operators, but
the ind-pro type topologies are not rich enough to give a convergence
condition on the operator spectrum any interesting content. Instead, Tate uses
the philosophy that any nilpotent operator should have trace zero, even if it
is not of finite rank. We briefly summarize Tate's operator trace
\cite{MR0227171} as we will also need it later:\medskip

Let $F_{0}$ be a field and $V$ an $F_{0}$-vector space. Call an endomorphism
$g\in\operatorname*{End}\nolimits_{F_{0}}\left(  V\right)  $
\emph{finite-potent} if there is some $n\geq1$ such that the image $g^{n}V$ is
finite-dimensional over $F_{0}$. An $F_{0}$-vector subspace $\Gamma
\subseteq\operatorname*{End}\nolimits_{F_{0}}\left(  V\right)  $ is called a
\emph{finite-potent family} if there is some $n\geq1$ such that $(g_{1}%
\circ\cdots\circ g_{n})V$ is finite-dimensional for any choice of
$g_{1},\ldots,g_{n}\in\Gamma$.

\begin{proposition}
[\cite{MR0227171}]\label{BT_PropTateTraceConstruction}(Tate) For every $F_{0}%
$-vector space $V$ and every finite-potent $g\in\operatorname*{End}%
\nolimits_{F_{0}}\left(  V\right)  $ there is a unique element, denoted
$\operatorname*{tr}\nolimits_{V}g\in F_{0}$ (and called \emph{Tate trace}),
such that the following rules hold:

\begin{description}
\item[T1] If $V$ is finite-dimensional, $\operatorname*{tr}\nolimits_{V}g$ is
the usual trace.

\item[T2] If $W\subseteq V$ is any $F_{0}$-vector subspace and $gW\subseteq W
$, we have~$\operatorname*{tr}\nolimits_{V}g=\operatorname*{tr}\nolimits_{W}%
g+\operatorname*{tr}\nolimits_{V/W}g$.

\item[T3] If $g$ is nilpotent, $\operatorname*{tr}\nolimits_{V}g=0$.

\item[T4] Suppose $\Gamma\subseteq\operatorname*{End}\nolimits_{F_{0}}\left(
V\right)  $ is a finite-potent family. Then $\operatorname*{tr}\nolimits_{V}%
\mid_{\Gamma}$ is $F_{0}$-linear, i.e. $\operatorname*{tr}\nolimits_{V}\left(
af+bg\right)  =a\operatorname*{tr}\nolimits_{V}f+b\operatorname*{tr}%
\nolimits_{V}g$ for all $a,b\in F_{0}$ and $f,g\in\Gamma$.
(\footnote{Mysteriously, in general the linearity axiom \textbf{T4} fails. A
concrete counter-example is given by Pablos Romo in \cite{MR2319783}. See also
\cite{MR2360831}, \cite{MR3177048} for a more thorough discussion. However,
this need not concern us; the non-linearity will never show up in the
applications of the above proposition in this paper.})

\item[T5] Suppose $f:V\rightarrow V^{\prime}$ and $g:V^{\prime}\rightarrow V$
are $F_{0}$-vector space homomorphisms and the composition $f\circ g$ is
finite-potent on $V^{\prime}$. Then the reverse composition $g\circ f$ is
finite-potent on $V$ and $\operatorname*{tr}\nolimits_{V^{\prime}}\left(
f\circ g\right)  =\operatorname*{tr}\nolimits_{V}\left(  g\circ f\right)  $.
\end{description}
\end{proposition}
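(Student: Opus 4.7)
The plan is to construct the trace explicitly and then verify the five axioms, with uniqueness forced by \textbf{T1}, \textbf{T2}, \textbf{T3}. Given a finite-potent $g \in \operatorname{End}_{F_0}(V)$, I would pick any finite-dimensional $g$-stable subspace $W \subseteq V$ on which the induced action on $V/W$ is nilpotent, and define $\operatorname{tr}_V g := \operatorname{tr}_W(g|_W)$ using the ordinary finite-dimensional trace. Such $W$ exists: if $g^n V$ is finite-dimensional, take $W := g^n V$, which is $g$-stable since $gW = g^{n+1}V \subseteq W$, and on which $g^n$ annihilates $V/W$. Independence of the choice of $W$ is obtained by comparing two such subspaces $W_1, W_2$ through their sum: $W_1 + W_2$ is again finite-dimensional, $g$-stable, and has nilpotent quotient action, and standard additivity of the finite-dimensional trace gives $\operatorname{tr}_{W_1} g = \operatorname{tr}_{W_1 + W_2} g = \operatorname{tr}_{W_2} g$. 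Uniqueness is then immediate: for any candidate function, \textbf{T2} applied to $0 \subseteq W \subseteq V$ splits $\operatorname{tr}_V g$ as $\operatorname{tr}_W g + \operatorname{tr}_{V/W} g$, \textbf{T3} kills the second summand, and \textbf{T1} identifies the first with the ordinary trace on $W$.

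Axioms \textbf{T1} and \textbf{T3} follow from the construction by taking $W = V$ or $W = 0$. For \textbf{T2}, given a $g$-stable subspace $W \subseteq V$, I would enlarge $W$ to a $g$-stable $W^{\ast} \subseteq V$ with $W^{\ast}$ finite-dimensional and $V/W^{\ast}$ on which $g$ acts nilpotently; the image of $W^{\ast}$ in $V/W$ simultaneously witnesses $\operatorname{tr}_{V/W} g$, and the identity reduces to finite-dimensional additivity. For \textbf{T5}, the finite-potence of $g \circ f$ follows from $(g \circ f)^{n+1} V = g (fg)^n fV \subseteq g (fg)^n V'$, a finite-dimensional subspace. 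Choose $W' \subseteq V'$ finite-dimensional, $fg$-stable, with nilpotent quotient action, and set $W := g W' \subseteq V$; then $f$ restricts to $W \to W'$ and $g$ restricts to $W' \to W$, so the equality reduces to the classical identity $\operatorname{tr}_{W'}((f|_W)(g|_{W'})) = \operatorname{tr}_W((g|_{W'})(f|_W))$.

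The main obstacle is \textbf{T4}, because the footnote warns that linearity fails in general, so the finite-potent family assumption must be used essentially. For fixed $f, g \in \Gamma$ and $a, b \in F_0$, the key is to exhibit a single finite-dimensional subspace $W \subseteq V$ stable under both $f$ and $g$, on which all relevant operators are nilpotent mod $W$, so that $\operatorname{tr}_V f$, $\operatorname{tr}_V g$, and $\operatorname{tr}_V(af + bg)$ all reduce to ordinary traces on the same $W$, where linearity is automatic. The natural candidate is
\[
W := \sum_{h_1, \ldots, h_N \in \{f, g\}} h_1 h_2 \cdots h_N \cdot V\text{,}
\]
a sum of $2^N$ finite-dimensional pieces (by the finite-potence index $N$ of $\Gamma$), hence finite-dimensional. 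It is $f$- and $g$-stable: prepending a letter to a length-$N$ word $h_1 \cdots h_N$ gives a length-$(N+1)$ word $h_0 h_1 \cdots h_N$, whose image on $V$ equals $(h_0 h_1 \cdots h_{N-1}) (h_N V) \subseteq (h_0 h_1 \cdots h_{N-1}) V$, a length-$N$ word summand of $W$. Moreover $f^N V, g^N V \subseteq W$, and any $F_0$-linear combination of $f$ and $g$ raised to the $N$-th power expands to a sum of length-$N$ words by the distributivity in $\Gamma$, so every such combination acts nilpotently on $V/W$. The independence result from the first paragraph then guarantees that these restricted traces agree with the globally defined $\operatorname{tr}_V$, finishing the proof.
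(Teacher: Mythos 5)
The paper itself does not prove this proposition but simply cites Tate's original article, so your proposal is compared against the standard approach (and the one Tate actually uses). Your overall structure is correct and matches Tate's: define $\operatorname*{tr}_V g$ as the ordinary trace on a finite-dimensional $g$-stable subspace $W$ with $g$ nilpotent on $V/W$, prove independence by passing to $W_1+W_2$, deduce uniqueness from \textbf{T1}--\textbf{T3}, and handle \textbf{T4} by building a single common witness from all length-$N$ words in $f,g$. Your arguments for the construction, independence, uniqueness, \textbf{T1}, \textbf{T3}, \textbf{T4} and \textbf{T5} are all correct; in particular your treatment of \textbf{T4} via the subspace $\sum h_1\cdots h_N V$ is exactly the point where the finite-potent family hypothesis is used, and it is carried out properly.

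However, the argument you give for \textbf{T2} contains a genuine error. You propose to \emph{enlarge} $W$ to a $g$-stable $W^\ast\supseteq W$ with $W^\ast$ finite-dimensional; but $W$ is an arbitrary $g$-stable subspace and will usually be infinite-dimensional, so no such enlargement exists. The fix is to run the argument in the other direction: let $W^\ast:=g^nV$ (or any valid finite-dimensional witness for $\operatorname*{tr}_V g$), which need not contain $W$. Then $W^\ast\cap W$ is a finite-dimensional $g$-stable subspace of $W$ with $g^n W\subseteq W^\ast\cap W$, hence it witnesses $\operatorname*{tr}_W g$; and the image $\overline{W^\ast}$ of $W^\ast$ in $V/W$ is a finite-dimensional $g$-stable subspace containing $g^n(V/W)$, hence it witnesses $\operatorname*{tr}_{V/W}g$. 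The short exact sequence $0\to W^\ast\cap W\to W^\ast\to\overline{W^\ast}\to 0$ of finite-dimensional $g$-stable spaces then gives $\operatorname*{tr}_{W^\ast}g=\operatorname*{tr}_{W^\ast\cap W}g+\operatorname*{tr}_{\overline{W^\ast}}g$, which is exactly \textbf{T2}. With this repair your proof is complete.
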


\begin{example}
Consider $F_{0}:=k$ and $V:=k[t,t^{-1}]$. Then $f\in\operatorname*{End}%
\nolimits_{F_{0}}(V)$ given by $t^{i}\mapsto t^{-i}$ for $i\geq0$ and
$t^{i}\mapsto0$ for $i<0$ is a finite-potent morphism which is \textit{not}
finite-rank, so the usual trace is not applicable. The vector $t^{0}$ spans a
$1$-dimensional $f$-stable subspace and on the vector space quotient
$k[t,t^{-1}]/k\left\langle t^{0}\right\rangle $ the induced operator
$\overline{f}$ is nilpotent, so by \textbf{T1} and \textbf{T2} we get
$\operatorname*{tr}\nolimits_{V}f=1$.
\end{example}

\begin{lemma}
[{\cite[Thm. 2]{MR0227171}}]\label{Lemma_LocalTateFormulaDimOne}$\phi(f\wedge
g)=\operatorname*{res}\nolimits_{x}f\mathrm{d}g$.
\end{lemma}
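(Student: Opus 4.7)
My strategy is to unravel the snake-lemma definition of $\phi$ into a concrete commutator expression, then reduce the Tate-trace to a finite-rank trace that can be computed in coordinates. The point is that once the snake-lemma step is carried out, \emph{any} choice of splitting $E = I^{+}+I^{-}$ gives the same answer (the construction is canonical), so the final verification is allowed to use an arbitrary coordinate system.

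\textbf{Step 1 (snake-lemma unravelling).} Choose the lattice $L:=\widehat{\mathcal{O}}_{X,x}$ with its $k$-linear projection $\pi:=\pi_L$, and decompose the multiplication operator
\[
M_f = M_f\pi + M_f(1-\pi) =: M_f^{+}+M_f^{-}.
\]
Since $M_f^{+}$ has image in the lattice $fL$ it lies in $I^{+}$, while $M_f^{-}$ has $L$ in its kernel so lies in $I^{-}$. Lifting $f\wedge g\in E\wedge E$ to $(M_f^{+}\wedge M_g,\; M_f^{-}\wedge M_g)\in(I^{+}\wedge E)\oplus(I^{-}\wedge E)$ and applying the commutator produces $([M_f^{+},M_g],[M_f^{-},M_g])$, whose two components add up to $[M_f,M_g]=0$. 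By exactness of the bottom row of diagram \ref{la6} this is the image of a unique element of $I_{tr}$, which the snake lemma sends to $[M_f^{+},M_g]\in I_{tr}$ modulo commutators. Thus $\phi(f\wedge g)=\operatorname{tr}[M_f^{+},M_g]$, and since $M_f, M_g$ commute this equals $\operatorname{tr}\bigl(M_f\,[\pi,M_g]\bigr)$.

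\textbf{Step 2 (reduction to finite rank).} The operator $[\pi,M_g]$ actually has finite rank: on $L$ it acts as $-(1-\pi)M_g$, whose image lies in $(gL+L)/L$, and on a chosen complement of $L$ it acts as $\pi M_g$ with image in $L\cap gL^{c}$. Both spaces are finite-dimensional by commensurability of the lattices $L$ and $gL$ inside the one-dimensional $\widehat{\mathcal{K}}_{X,x}$-space $\widehat{\mathcal{K}}_{X,x}$. Hence $M_f[\pi,M_g]$ is finite-rank, and by axiom \textbf{T1} its Tate trace coincides with the ordinary matrix trace; the cokernel-well-definedness is then guaranteed by \textbf{T3} (nilpotent commutators) and \textbf{T4} (linearity on the finite-potent family involved).

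\textbf{Step 3 (coordinate computation).} Pick any isomorphism $\widehat{\mathcal{K}}_{X,x}\simeq\kappa(x)((t))$ and a $k$-basis $\{e_\alpha\}$ of $\kappa(x)$; then $\{e_\alpha t^{i}\}_{i\in\mathbf{Z},\alpha}$ is a $k$-basis, $\pi$ is the projection onto the span of the $i\geq 0$ part, and a direct computation on monomials yields
\[
[\pi,M_{t^{n}}]\,t^{i}=\begin{cases} t^{i+n}, & i<0\leq i+n,\\ -t^{i+n}, & i+n<0\leq i,\\ 0, & \text{otherwise.}\end{cases}
\]
Composing with $M_{t^{m}}$ and taking the diagonal gives $\operatorname{tr}\bigl(M_{t^{m}}[\pi,M_{t^{n}}]\bigr)=n\,\delta_{m+n,0}$, which is exactly $\operatorname{res}_{x}(t^{m}\,\mathrm{d}t^{n})$. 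For general $f,g\in\kappa(x)((t))$ only finitely many coefficients of $f$ and $g$ contribute to $M_f[\pi,M_g]$ (by the finite-rank reduction of Step 2), so bilinearity over $k$ together with the $\kappa(x)/k$-trace coming from the diagonal blocks $M_{a_{-1}}$ gives $\operatorname{tr}\bigl(M_f[\pi,M_g]\bigr)=\operatorname{Tr}_{\kappa(x)/k}(a_{-1})=\operatorname{res}_{x}f\,\mathrm{d}g$.

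\textbf{Main obstacle.} The interesting step is Step 2, the finite-rank reduction: without it, the ``trace'' in Step 1 is only a Tate trace and one would have to argue via \textbf{T2}--\textbf{T4} on filtrations of $\widehat{\mathcal{K}}_{X,x}$ to perform the calculation. Once one has commensurability of lattices and hence genuine finite-rank, the computation reduces to the elementary linear algebra of Step 3, and the coordinate-independence of the answer is automatic from the canonical nature of the snake-lemma construction.
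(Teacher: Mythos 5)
Your Step 1 matches the paper's argument: pick a splitting of $E=I^{+}+I^{-}$ via a projector $\pi$ onto a lattice, chase the snake map, and obtain $\phi(f\wedge g)=\operatorname{tr}[\,\pi f,g\,]$ (the paper writes $P^{+}$; whether one multiplies the projector on the left or right is immaterial since changing the lift by an element of $I_{tr}\wedge E$ changes the commutator by $[[\,\pi,f\,],g]$, which has Tate trace $0$). Step 3 is also the paper's computation in coordinates.

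The gap is in Step 2, the finite-rank reduction. It is \emph{not} true that $[\pi,M_{g}]$ (equivalently $M_{f}[\pi,M_{g}]$) has finite rank in general. Writing
$[\pi,M_{g}]=\pi M_{g}(1-\pi)-(1-\pi)M_{g}\pi$,
your commensurability argument correctly controls the second summand: its image sits inside a subquotient of the finitely many ``levels'' between the lattices $L$ and $gL+L$, hence is finite dimensional. But it does not control the first summand $\pi M_{g}(1-\pi)$. Concretely, with $L=\widehat{\mathcal O}_{X,x}$ and a complement spanned by $\mathbf t^{-1},\mathbf t^{-2},\dots$, one has $\pi M_{g}(1-\pi)\,\mathbf t^{-j}=\sum_{k\ge0}b_{k+j}\mathbf t^{k}$ where $g=\sum b_{i}t^{i}$, and for a generic power series (e.g.\ $g=\sum_{i\ge0}a_{i}t^{i}$ over $k=\mathbf Q(a_{0},a_{1},\dots)$ with the $a_{i}$ algebraically independent) these vectors are linearly independent, so the operator has infinite rank. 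What is true is that $[\pi,M_{g}]\in I_{tr}=I^{+}\cap I^{-}$: it is compact (image bounded by a lattice) \emph{and} discrete (a lattice inside the kernel, namely $L\cap g^{-1}L$), hence finite-potent — but not finite rank. This is exactly the situation Tate's finite-potent trace is built to handle.

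The consequence is that your justification for Step 3's reduction (``only finitely many coefficients contribute'') cannot invoke finite rank. The paper's route is instead: use Prop.\ \ref{Prop_LocalEAtFullFlagIsCubicallyDecomposedAlgebra}(2) (or directly axioms \textbf{T1}--\textbf{T3}) to produce a finite-dimensional $f$-stable subspace $W\subseteq\widehat{\mathcal K}_{X,x}$ with $\operatorname{tr}_{I_{tr}}=\operatorname{tr}_{W}$, and then bilinearity in $f$ and $g$ is available on a finite-potent family by \textbf{T4}; after that the monomial computation of your Step 3 (which is correct) finishes the argument. So the overall strategy is sound and in fact the same as the paper's, but Step 2 as written is false and must be replaced by the finite-potent/Tate-trace reduction.
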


\begin{proof}
We just need to follow the snake morphism in Eq. \ref{la6}. For this we need
to split the surjection in the top row of Eq. \ref{la6}, i.e. pick idempotents
$P^{\pm}$ on $E$ such that $P^{\pm}E\subseteq I^{\pm}$ so that $P^{+}%
+P^{-}=\mathbf{1}$. Then unwinding the snake morphism yields%
\[
\xymatrix{ & (P^{+}f \wedge g) \oplus (-P^{-}f \wedge g) \ar[r] \ar[d] & f \wedge g \\ [P^{+}f, g] \ar[r] & [P^{+}f, g] \oplus -[P^{-}f, g] }
\]
and so the composition of maps in Eq. \ref{la7} unwinds to the concrete
formula%
\begin{equation}
\phi:\widehat{\mathcal{K}}_{X,x}\wedge\widehat{\mathcal{K}}_{X,x}\rightarrow
k\qquad\qquad\phi(f\wedge g)=\operatorname*{tr}[P^{+}f,g]\label{la8}%
\end{equation}
(or $-\operatorname*{tr}[P^{-}f,g]$ equivalently). It follows immediately that
this formula is independent of the choice of a particular $P^{+}$. We may pick
any isomorphism $\widehat{\mathcal{K}}_{X,x}\simeq\kappa(x)((t))$. Suppose $x$
is a $k$-rational point, i.e. $\kappa(x)=k$. In order to distinguish between
$t^{i}$ as a multiplication operator or as a topological basis element of
$\widehat{\mathcal{K}}_{X,x}$, let us write $\mathbf{t}^{i}$ for the
latter.\ Then take for example $P^{+}(\mathbf{t}^{i}):=\delta_{i\geq
0}\mathbf{t}^{i}$. This clearly lies in $I^{+}$, $P^{-}:=\mathbf{1}-P^{+}$
lies in $I^{-}$ and we compute%
\[
\lbrack P^{+}t^{i},t^{j}]\mathbf{t}^{\lambda}=\delta_{\lambda+i+j\geq
0}\mathbf{t}^{\lambda+i+j}-\delta_{\lambda+i\geq0}\mathbf{t}^{\lambda
+i+j}=\delta_{-j\leq\lambda+i<0}\mathbf{t}^{\lambda+i+j}\text{.}%
\]%
\begin{equation}%
{\includegraphics[
height=0.4359in,
width=3.9738in
]%
{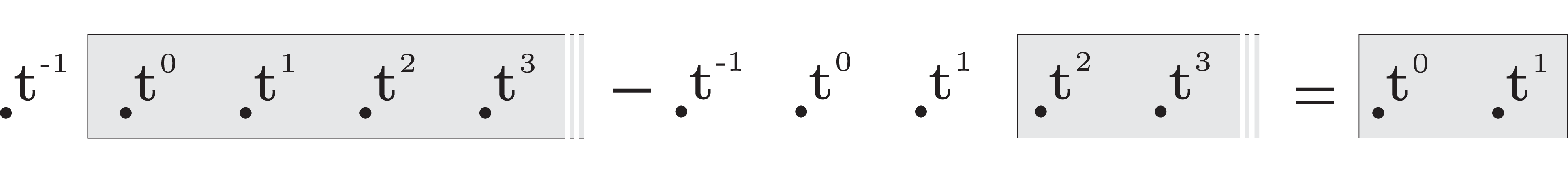}%
}%
\label{lTRACEPIC1}%
\end{equation}
Suppose $j=1$, then $[P^{+}t^{i},t]\mathbf{t}^{\lambda}=\delta_{-1\leq
\lambda+i<0}\mathbf{t}^{\lambda+i+1}$. This has a non-trivial invariant
subspace iff $i=-1$, so $\phi(t^{i}\wedge t)=0$ for $i\neq-1$. For $i=-1$ we
get $[P^{+}t^{-1},t]\mathbf{t}^{\lambda}=\delta_{-1\leq\lambda-1<0}%
\mathbf{t}^{\lambda}$, so $k\left\langle \mathbf{t}^{0}\right\rangle $ is a
$1$-dimensional invariant subspace and therefore $\phi(t^{-1}\wedge t)=1$:
Just like $\operatorname*{res}t^{i}\mathrm{d}t=\delta_{i=-1}$. If $x$ is an
arbitrary closed point, $\kappa(x)/k$ is a finite field extension. The above
computation still applies if we work with $\kappa(x)$-vector spaces. Writing
$\kappa(x)$ itself as a $[\kappa(x):k]$-dimensional $k$-vector space yields
the formula $\operatorname*{res}t^{i}\mathrm{d}t=[\kappa(x):k]\delta_{i=-1}$.
\end{proof}

The map $\phi:\widehat{\mathcal{K}}_{X,x}\wedge\widehat{\mathcal{K}}%
_{X,x}\rightarrow k$ induces a functional $H_{2}((\widehat{\mathcal{K}}%
_{X,x})_{Lie},k)^{\ast}\cong H^{2}((\widehat{\mathcal{K}}_{X,x})_{Lie},k)$ and
the resulting Lie central extension is the one arising from pushing out Eq.
\ref{la22} by Tate's trace,%
\[
\xymatrix{ 0 \ar[r] & I_{tr} \ar[r] \ar[d] & I^{+} \oplus I^{-} \ar[r] \ar[d] & E \ar[r] \ar[d] & 0 \\ 0 \ar[r] & k \ar[r] & \widehat{E} \ar[r] & E \ar[r] & 0 }
\]

\begin{definition}
The central extension $\widehat{E}$ in the lower row is \emph{Tate's central
extension}.
\end{definition}

\section{Ad\`{e}les\label{section_Adeles}}

\subsection{For curves\label{marker_ForCurves}}

Let $X/k$ be an integral smooth proper algebraic curve. Tate \cite{MR0227171}
uses the language of ad\`{e}les of the curve $-$ a technique borrowed from
number theory. We write $\prod_{x\in U^{1}}\widehat{\mathcal{O}}_{X,x}$ as a
shorthand for the $\mathcal{O}_{X}$-module sheaf%
\[
U\mapsto\left.
{\textstyle\prod\nolimits_{x\in U^{1}}}
\right.  \widehat{\mathcal{O}}_{X,x}\qquad\text{for }U\text{ any Zariski open
set,}%
\]
where $\widehat{\mathcal{O}}_{X,x}$ is the $\mathfrak{m}_{x}$-adically
completed local ring and $U^{p}$ denotes the set of codimension $p$ points in
$U$. The restriction map to smaller opens is the factorwise identity so that
the sheaf is flasque. There is an exact sequence of $\mathcal{O}_{X}$-module
sheaves%
\begin{equation}
0\longrightarrow\mathcal{O}_{X}\overset{\operatorname*{diag}}{\longrightarrow
}k(X)\oplus\left.
{\textstyle\prod\nolimits_{x\in U^{1}}}
\right.  \widehat{\mathcal{O}}_{X,x}\overset{\operatorname*{diff}%
}{\longrightarrow}\left.
{\textstyle\prod\nolimits_{x\in U^{1}}^{\prime}}
\right.  \widehat{\mathcal{K}}_{X,x}\longrightarrow0\text{,}\label{la9}%
\end{equation}
where $\mathcal{O}_{X}$ is the structure sheaf, $k\left(  X\right)  $ the
locally constant sheaf of rational functions, $\widehat{\mathcal{K}}%
_{X,x}:=\operatorname*{Frac}\widehat{\mathcal{O}}_{X,x}$, and the prime
superscript in the rightmost sheaf abbreviates the condition that for all but
finitely many $x\in U^{1}$ we demand sections to lie in the subspace
$\widehat{\mathcal{O}}_{X,x}\subset\widehat{\mathcal{K}}_{X,x}$. It is clear
that the sequence is exact and that it is actually a flasque resolution of
$\mathcal{O}_{X}$. Moreover, the global sections of the sheaves are
classically known as%
\[%
\begin{tabular}
[c]{l|l|l}%
sheaf side & ad\`{e}le side & \\\hline
$H^{0}(X,k(X))$ & $k(X)$ & function field of the curve\\
$H^{0}(X,\left.
{\textstyle\prod\nolimits_{x\in U^{1}}}
\right.  \widehat{\mathcal{O}}_{X,x})$ & $\mathbf{A}_{X}^{0}$ & integral
ad\`{e}le ring\\
$H^{0}(X,\left.
{\textstyle\prod\nolimits_{x\in U^{1}}^{\prime}}
\right.  \widehat{\mathcal{K}}_{X,x})$ & $\mathbf{A}_{X}$ & ad\`{e}le ring
\end{tabular}
\]
The ad\`{e}le approach to the theory of curves is due to Weil, we refer to
\cite{serrecft}, \cite{MR0227171} for a presentation of this formalism. The
same technique works for arbitrary quasi-coherent sheaves by tensoring. As a
result of the resolution in Eq. \ref{la9} we obtain for example%
\[
H^{0}(X,\mathcal{O}_{X})=\mathbf{A}_{X}^{0}\cap k(X)\qquad H^{1}%
(X,\mathcal{O}_{X})=\mathbf{A}_{X}/(\mathbf{A}_{X}^{0}+k(X))\text{.}%
\]
In particular, in order to describe the global residue map%
\[
H^{1}(X,\Omega_{X/k}^{1})\longrightarrow k
\]
we can employ such an ad\`{e}le resolution of the sheaf $\Omega_{X/k}^{1}$ to
give elements of the left-hand side a concrete representation, cf. Tate
\cite{MR0227171}.

\subsection{In general}

Parshin generalized this method to surfaces by introducing two-dimensional
ad\`{e}les \cite{MR0419458}, \cite{ParshinAdeleTheory}. Beilinson's paper
\cite{MR565095} provides the multidimensional technology. We need to recall
this for later use:\medskip

We mostly follow the notation in \cite{MR565095}. Let $X$ be a Noetherian
scheme. For points $\eta_{0},\eta_{1}\in X$ we write $\eta_{0}>\eta_{1}$ if
$\overline{\{\eta_{0}\}}\ni\eta_{1}$, $\eta_{1}\neq\eta_{0}$. Denote by
$S\left(  X\right)  _{n}:=\{(\eta_{0}>\cdots>\eta_{n}),\eta_{i}\in X\}$ the
set of chains of length $n+1$. Let $K_{n}\subseteq S\left(  X\right)  _{n}$ be
an arbitrary subset. For any point $\eta\in X$ define $\left.  _{\eta
}K\right.  :=\{(\eta_{1}>\cdots>\eta_{n})$ s.t. $(\eta>\eta_{1}>\cdots
>\eta_{n})\in K_{n}\}$, a subset of $S\left(  X\right)  _{n-1}$. Let
$\mathcal{F}$ be a \textit{coherent} sheaf on $X$. For $n=0$ and $n\geq1 $ we
define inductively%
\begin{align}
A(K_{0},\mathcal{F}):=  &
{\displaystyle\prod\nolimits_{\eta\in K_{0}}}
\underleftarrow{\lim}_{i}\mathcal{F}\otimes_{\mathcal{O}_{X}}\mathcal{O}%
_{X,\eta}/\mathfrak{m}_{\eta}^{i}\label{TATEMATRIX_l6}\\
A(K_{n},\mathcal{F}):=  &
{\displaystyle\prod\nolimits_{\eta\in X}}
\underleftarrow{\lim}_{i}A(\left.  _{\eta}K_{n}\right.  ,\mathcal{F}%
\otimes_{\mathcal{O}_{X}}\mathcal{O}_{X,\eta}/\mathfrak{m}_{\eta}^{i}%
)\text{.}\nonumber
\end{align}
The sheaves $\mathcal{F}\otimes_{\mathcal{O}_{X}}\mathcal{O}_{X,\eta
}/\mathfrak{m}_{\eta}^{i}$ are usually only quasi-coherent, so we complement
this partial definition as follows: For a \textit{quasi-coherent} sheaf
$\mathcal{F}$ we define $A(K_{n},\mathcal{F}):=\underrightarrow
{\operatorname*{colim}}_{\mathcal{F}_{j}}A(K_{n},\mathcal{F}_{j})$, where
$\mathcal{F}_{j}$ runs through all coherent subsheaves of $\mathcal{F}$ (and
hereby reducing to Eq. \ref{TATEMATRIX_l6}). As it is built successively from
ind-limits and Mittag-Leffler pro-limits, $A(K_{n},-)$ is a covariant exact
functor from quasi-coherent sheaves to abelian groups. Next, we observe that
$S(X)_{\bullet}$ carries a natural structure of a simplicial set (omitting the
$i$-th entry in a flag yields faces; duplicating the $i$-th entry in a flag
degeneracies). This turns%
\[
\mathbf{A}^{\bullet}(U,\mathcal{F}):=A(S(U)_{\bullet},\mathcal{F}%
)\qquad\text{(for }U\text{ Zariski open)}%
\]
into a sheaf of cosimplicial abelian groups (actually even cosimplicial
$\mathcal{O}_{X}$-module sheaves) and via the unreduced Dold-Kan
correspondence into a complex of sheaves, which we may denote by
$\mathbf{A}_{\mathcal{F}}^{i}$.

\begin{theorem}
[{\cite[\S 2]{MR565095}}]\label{lX_BeilinsonResolutionThm}(Beilinson) For a
Noetherian scheme $X$ and a quasi-coherent sheaf $\mathcal{F}$ on $X$, the
$\mathbf{A}^{i}(-,\mathcal{F})$ are flasque sheaves and%
\[
0\longrightarrow\mathcal{F}\longrightarrow\mathbf{A}_{\mathcal{F}}%
^{0}\longrightarrow\mathbf{A}_{\mathcal{F}}^{1}\longrightarrow\cdots
\]
is a flasque resolution.
\end{theorem}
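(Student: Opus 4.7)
My approach would be to separate the flasqueness from the resolution property, reducing first to the case of coherent $\mathcal{F}$: since $A(K_n,-)$ is defined on quasi-coherent sheaves as a filtered colimit of its values on coherent subsheaves, and filtered colimits preserve exactness of complexes and flasqueness, it suffices to treat coherent $\mathcal{F}$.

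Flasqueness of $\mathbf{A}^i(-,\mathcal{F})$ is essentially formal. For an open inclusion $V\subseteq U$, the restriction $\mathbf{A}^i(U,\mathcal{F})\to\mathbf{A}^i(V,\mathcal{F})$ is a projection of a product indexed by chains lying in $U$ onto the subproduct indexed by chains lying in $V$, and extending by zero on chains outside $V$ gives a set-theoretic section, hence surjectivity.

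Exactness of the augmented complex is the real content. Injectivity of $\mathcal{F}\to\mathbf{A}^0_{\mathcal{F}}$ I would check stalkwise: at $x\in X$ the map sends a germ to its image in $\prod_{\eta}\widehat{\mathcal{F}}_\eta$, and injectivity follows from Krull's intersection theorem applied to the coherent sheaf on the Noetherian scheme. For exactness in positive cosimplicial degree, the natural strategy is induction on $d=\dim\operatorname{supp}\mathcal{F}$. Fix a generic point $\eta$ of $\operatorname{supp}\mathcal{F}$ and decompose $S(X)_\bullet$ into chains that begin with $\eta$ and chains that do not; this gives a short exact sequence of simplicial sets, and by exactness of $A(K_n,-)$ in the first argument, a short exact sequence of cosimplicial groups. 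The chains starting with $\eta$ form a simplicial cone on $S(\overline{\{\eta\}})_{\bullet-1}$, so by the inductive definition \eqref{TATEMATRIX_l6} the corresponding subcomplex agrees (after a degree shift) with the cone on the adele complex of $\widehat{\mathcal{F}}_\eta$ over $\overline{\{\eta\}}$ and is therefore contractible in positive degree, contributing exactly the augmentation piece $\widehat{\mathcal{F}}_\eta$ in degree zero. The complement consists of chains avoiding $\eta$ and computes the adele complex of a sheaf whose support has strictly smaller dimension, so the inductive hypothesis applies. The long exact sequence obtained by splicing then yields the desired vanishing.

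The main obstacle is carrying this inductive decomposition cleanly through the ind-pro definition of $A(K_n,-)$. One has to verify that completion along $\eta$, localization away from $\eta$, and the simplicial face/degeneracy operations all cooperate as expected — in particular that $\varprojlim_i\mathcal{F}\otimes\mathcal{O}_{X,\eta}/\mathfrak{m}_\eta^i$ is exact on coherent sheaves (Artin--Rees, using Noetherianness) and that the relevant pro-systems are Mittag-Leffler so that the $\varprojlim{}^1$ terms vanish. These verifications are what swell the proof beyond the rough outline; once they are in place, the induction closes and both the flasqueness and the resolution assertion follow.
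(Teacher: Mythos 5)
The paper itself does not prove this theorem: it states it as a citation to Beilinson \cite{MR565095} and immediately refers the reader to Huber \cite{MR1105583}, \cite{MR1138291} for a detailed proof, so there is no in-paper argument to compare against. That said, your outline is a credible reconstruction of the standard strategy of Beilinson and Huber: reduction to coherent $\mathcal{F}$ via filtered colimits, flasqueness from the full-product form of the definition together with extension by zero, injectivity of the augmentation from Krull's intersection theorem, and exactness by induction on the dimension of the support with a decomposition of flags relative to a generic point, with Artin--Rees and Mittag-Leffler controlling the ind-pro limits. That is indeed where the substance lies.

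Two points in your sketch need tightening, though they are repairable. First, the set of chains that \emph{begin} with $\eta$ is not a simplicial subset of $S(X)_{\bullet}$: the face map that drops the first entry takes such a chain to one that no longer begins with $\eta$. What is a simplicial subset is the set of chains not containing $\eta$ (for $\eta$ the generic point of an irreducible support this coincides with the set not beginning with $\eta$), and the chains beginning with $\eta$ then appear only as the levelwise kernel of the induced cosimplicial surjection, not as a ``short exact sequence of simplicial sets.'' Second, $A(K_{n},-)$ is exact in the sheaf variable, not ``in the first argument''; what you are really using is the product decomposition $A(K_{n},\mathcal{F})\cong A(K_{n}^{\prime},\mathcal{F})\times A(K_{n}^{\prime\prime},\mathcal{F})$ attached to a disjoint union $K_{n}=K_{n}^{\prime}\sqcup K_{n}^{\prime\prime}$, combined with the cosimplicial compatibility just described. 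Getting these right, and then verifying that completion along $\eta$ commutes with the induction as you claim, is exactly the bookkeeping that occupies the bulk of Huber's proof.
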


See Huber \cite{MR1105583}, \cite{MR1138291} for a detailed proof. There are
also discussions circling around this construction in H\"{u}bl-Yekutieli
\cite{MR1374916}, Osipov \cite{MR2314612}, Parshin \cite{MR697316}. A very
interesting perspective on the relation of the Grothendieck residue complex
and ad\`{e}les can be found in Yekutieli \cite{MR2007399}. Beilinson actually
defines $S(X)_{n}$ so that also degenerate chains with $\eta_{i}=\eta_{i+1}$
are allowed, but one can check that this yields a slightly larger, but
quasi-isomorphic complex \cite[\S 5.1]{MR1138291}.

\begin{example}
Suppose $X$ is an integral smooth proper curve and $\triangle$ the set of all
flags. We may read the sets of codimension $p$ points $X^{p}$ as length one
flags. One computes%
\begin{align*}
A(X^{0},\mathcal{O}_{X})  &  =k(X)\qquad A(X^{1},\mathcal{O}_{X})=\left.
{\textstyle\prod\nolimits_{x\in X^{1}}}
\right.  \widehat{\mathcal{O}}_{X,x}\\
A(\triangle,\mathcal{O}_{X})  &  =\left.
{\textstyle\prod\nolimits_{x\in U^{1}}^{\prime}}
\right.  \widehat{\mathcal{K}}_{X,x}%
\end{align*}
so that Thm. \ref{lX_BeilinsonResolutionThm} reduces to the Eq. \ref{la9}.
\end{example}

It is also instructive to have a detailed look at a computation in dimension two:

\begin{example}
[generic behaviour]\label{example_GenericBehaviour}For a commutative and
unital ring $R$ and a prime $P\subset R$, $\underrightarrow
{\operatorname*{colim}}_{f\notin P}R[f^{-1}]$ is the localization $R_{P}$. For
any such $f$, $R[f^{-1}]=\underrightarrow{\operatorname*{colim}}%
_{i}R\left\langle f^{-i}\right\rangle $ for $i\rightarrow\infty$, where
$\left\langle f^{-i}\right\rangle $ denotes the $R $-sub\emph{module}
generated by $f^{-i}$\ inside $R[f^{-1}]$. Combining both colimits writes
$R_{P}$ as a colimit of finitely generated $R$-modules. We shall abbreviate
this colimit by writing $\underrightarrow{\operatorname*{colim}}_{f\notin
P}R\left\langle f^{-\infty}\right\rangle $. Now suppose
$X:=\operatorname*{Spec}k[s,t]$ and $\triangle:=\{(0)>(s)>(s,t)\}\in S\left(
X\right)  _{2}$ is a singleton set. Then
\begin{align*}
A(\triangle,\mathcal{O}_{X})  &  =A(\left.  _{(0)}\triangle\right.
,k(s,t))=\underset{f\notin(0)}{\underrightarrow{\operatorname*{colim}}%
}A(\left.  _{(0)}\triangle\right.  ,k[s,t]\left\langle f^{-\infty
}\right\rangle )\\
&  =\underset{f\notin(0)}{\underrightarrow{\operatorname*{colim}}}\underset
{i}{\underleftarrow{\lim}}A(\left.  _{(s)(0)}\triangle\right.
,k[s,t]\left\langle f^{-\infty}\right\rangle \otimes k[s,t]_{(s)}/(s^{i}))\\
&  =\underset{f\notin(0)}{\underrightarrow{\operatorname*{colim}}}\underset
{i}{\underleftarrow{\lim}}\underset{g\notin(s)}{\underrightarrow
{\operatorname*{colim}}}A(\left.  _{(s)(0)}\triangle\right.
,k[s,t]\left\langle f^{-\infty}\right\rangle \left\langle g^{-\infty
}\right\rangle /(s^{i}))\\
&  =\underset{f\notin(0)}{\underrightarrow{\operatorname*{colim}}}\underset
{i}{\underleftarrow{\lim}}\underset{g\notin(s)}{\underrightarrow
{\operatorname*{colim}}}\underset{j}{\underleftarrow{\lim}}k[s,t]_{(s,t)}%
\left\langle f^{-\infty}\right\rangle \left\langle g^{-\infty}\right\rangle
/(s^{i})/(s,t)^{j}\text{.}%
\end{align*}
and this yields%
\begin{align*}
&  =\underset{f\notin(0)}{\underrightarrow{\operatorname*{colim}}}\underset
{i}{\underleftarrow{\lim}}\underset{g\notin(s)}{\underrightarrow
{\operatorname*{colim}}}k[[s,t]]\left\langle f^{-\infty}\right\rangle
\left\langle g^{-\infty}\right\rangle /(s^{i})\\
&  =\underset{f\notin(0)}{\underrightarrow{\operatorname*{colim}}}\underset
{i}{\underleftarrow{\lim}}k[[s,t]][\left(  k[s,t]-(s)\right)  ^{-1}%
]\left\langle f^{-\infty}\right\rangle /(s^{i})\\
&  =\underset{f\notin(0)}{\underrightarrow{\operatorname*{colim}}%
}k((t))[[s]]\left\langle f^{-\infty}\right\rangle =k((t))((s))\text{.}%
\end{align*}
Note that this computation has not provided us with a canonical isomorphism to
$k((t))((s))$. Already writing $\mathbf{A}_{k}^{2}$ as $\operatorname*{Spec}%
k[s,t]$ involved the choice of coordinates $s,t$.
\end{example}

The structural similarities to the entire discussion in
\S \ref{marker_sect_TateOriginalConstruction} are more than obvious. Again, we
get an isomorphism $\simeq k((t))((s))$ making it tempting to define a
two-dimensional residue as%
\[
\operatorname*{res}\nolimits_{t}\operatorname*{res}\nolimits_{s}%
f\mathrm{d}s\wedge\mathrm{d}t=a_{-1,-1}\qquad\text{where}\qquad f=%
{\textstyle\sum\nolimits_{s,t}}
a_{k,l}s^{k}t^{l}\text{.}%
\]
While this would work (cf. \cite{MR0419458}, \cite{ParshinAdeleTheory}, but
beware of the topological pitfalls explained by Yekutieli \cite{MR1213064}) it
is a priori again entirely unclear whether this construction is independent of
the choice of the isomorphism.

\begin{example}
[exceptional behaviour]\label{example_exceptionaladelefibering}An example
where $A(\triangle,\mathcal{O}_{X})$ has two summands arises at singularities.
Note that for $\operatorname*{char}k\neq2$ the prime ideal $(s^{3}+s^{2}%
-t^{2})$ in $k[s,t]$ does not remain prime under the adelic completion because
the new element $\sqrt{1+s}=\sum_{k\geq0}\binom{1/2}{k}s^{k}$ enables a
factorization. Instead, we get two irreducible components.%
\[%
{\includegraphics[
height=0.5878in,
width=1.2478in
]%
{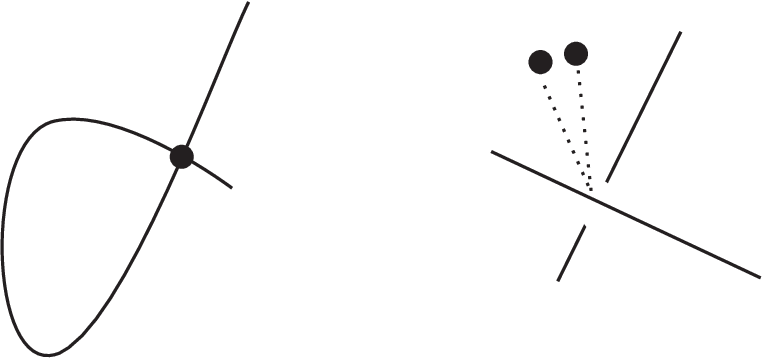}%
}%
\]
For the flag $\triangle:=\{(0)>(s,t)\}$ we obtain%
\begin{align*}
A(\triangle,\mathcal{O}_{X})  &  =A(\left.  _{(0)}\triangle\right.
,k(s)[t]/(s^{3}+s^{2}-t^{2}))\\
&  =\underset{f\notin(0)}{\underrightarrow{\operatorname*{colim}}}A(\left.
_{(0)}\triangle\right.  ,k[s,t]/(s^{3}+s^{2}-t^{2})\left\langle f^{-\infty
}\right\rangle )\\
&  =\underset{f\notin(0)}{\underrightarrow{\operatorname*{colim}}}\underset
{i}{\underleftarrow{\lim}}\underset{g\notin(s,t)}{\underrightarrow
{\operatorname*{colim}}}k[s,t]/(t^{i},s^{i},s^{3}+s^{2}-t^{2})\left\langle
f^{-\infty}\right\rangle \left\langle g^{-\infty}\right\rangle \\
&  =\underset{f\notin(0)}{\underrightarrow{\operatorname*{colim}}%
}k[[s,t]]/(s\sqrt{1+s}+t)(s\sqrt{1+s}-t)\left\langle f^{-\infty}\right\rangle
\\
&  =k((s))\oplus k((s))\text{,}%
\end{align*}
so that the image of $t$ is $(-s\sqrt{1+s},+s\sqrt{1+s})$. For the last step
in the computation note that the colimit is an Artinian ring, so it is
isomorphic to the product over the localizations at its maximal ideals.
\end{example}

A detailed description of the behaviour of ad\`{e}les especially for flags
along singular subvarieties can be found in \cite{MR697316},
\cite{ParshinAdeleTheory}. One can give a precise dictionary between direct
summand decompositions in ad\`{e}les and fibers of singularities under
normalization. We recommend Yekutieli \cite[\S 3.3]{MR1213064} for a thorough discussion.

\begin{definition}
[see \cite{MR1804915}]\label{def_nlocalfield}For $n\geq1$ an $n$\emph{-local
field with last residue field} $k$ is a complete discrete valuation field
whose residue field is an $(n-1)$-local field with last residue field $k$.
Moreover, we call $k$ itself the only $0$-local field with last residue field
$k$.
\end{definition}

In the formulation of the following proposition, we write $A_{Y}(-,-)$ to
refer to the ad\`{e}les belonging to the scheme $Y$.

\begin{proposition}
[{Structure theorem, \cite[p. $2$, $2^{\text{nd}}$ paragr.]{MR565095}}%
]\label{TATE_StructureOfLocalAdelesProp}Suppose $X$ is a finite type reduced
scheme of pure dimension $n$ over a field $k$ and $\triangle$ a finite subset
of%
\[
\{(\eta_{0}>\cdots>\eta_{n})\text{ such that }\operatorname*{codim}%
\nolimits_{X}\eta_{i}=i\}\qquad\subseteq S(X)_{n}\text{.}%
\]
Define%
\[
\triangle^{\prime}:=\{(\eta_{1}>\cdots>\eta_{n})\text{ such that }(\eta
_{0}>\cdots>\eta_{n})\in\triangle\text{ for some }\eta_{0}\}\text{.}%
\]

\begin{enumerate}
\item Then $A(\triangle,\mathcal{O}_{X})$ is a finite direct product of
$n$-local fields $\prod K_{i}$ such that each last residue field is a finite
field extension of $k$. Moreover,%
\begin{equation}
A(\triangle^{\prime},\mathcal{O}_{X})\overset{(\ast)}{\subseteq}%
{\textstyle\prod}
\mathcal{O}_{i}\subseteq%
{\textstyle\prod}
K_{i}=A(\triangle,\mathcal{O}_{X})\text{,}\label{lb2}%
\end{equation}
where $\mathcal{O}_{i}$ denotes the ring of integers of $K_{i}$ and $(\ast)$
is a finite ring extension. Each $K_{i}$ is non-canonically isomorphic to
$k^{\prime}((t_{1}))\cdots((t_{n}))$ for $k^{\prime}/k$ finite.

\item If we instead regard $\triangle^{\prime}$ as a flag in the closed
subscheme $\overline{\{\eta_{1}\}}$ the decomposition as in Eq. \ref{lb2} also
exists for $A_{\overline{\{\eta_{1}\}}}(\triangle^{\prime},\mathcal{O}%
_{\overline{\{\eta_{1}\}}})$. Its field factors equal the residue fields of
the $\mathcal{O}_{i}$ in Eq. \ref{lb2}. In particular, up to the finite
extensions $(\ast)$, the $n$-local field structure of the $K_{i}$ in
$A_{X}(\triangle,\mathcal{O}_{X})$ is induced from%
\[%
\xymatrix{
{A_{\overline{\{\eta_{0}\}}}(\triangle,\mathcal{O}_{X})} \\
{A_{\overline{\{\eta_{0}\}}}(\triangle^{\prime},\mathcal{O}_{X})} \ar
[u] \ar[r]
& {A_{\overline{\{\eta_{1}\}}}(\triangle^{\prime},\mathcal{O}_{X})} \\
& {A_{\overline{\{\eta_{1}\}}}(\triangle^{\prime\prime},\mathcal{O}_{X})}
\ar[u] \ar[r]
& {A_{\overline{\{\eta_{2}\}}}(\triangle^{\prime\prime},\mathcal{O}_{X})} \\
& & \ar[u] \vdots}%
\]

\item For a coherent sheaf $\mathcal{F}$, $A(\triangle,\mathcal{F}%
)\cong\mathcal{F}\otimes_{\mathcal{O}_{X}}A(\triangle,\mathcal{O}_{X})$.
\end{enumerate}
\end{proposition}

\textit{Beware:} Even if $\triangle$ consists only of one flag, the products
in Eq. \ref{lb2} may have several factors. See Example
\ref{example_exceptionaladelefibering}.

The first published proof (of a mild variation) of the above result was given
by Yekutieli \cite[Thm. 3.3.2]{MR1213064}. We now have described the
multidimensional generalization of the infinite-dimensional $k$-vector space
$\widehat{\mathcal{K}}_{X,x}$ appearing in
\S \ref{marker_sect_TateOriginalConstruction}.\medskip

Next, we need to describe the higher analogues of the operator ideals
$I^{+},I^{-}$. Since these might seem quite involved, let us axiomatize the
precise input datum which the following constructions require:

\begin{definition}
[\cite{MR565095}]\label{BT_DefCubicallyDecompAlgebra}Let $k$ be a field. An
\emph{(}$n$\emph{-fold)} \emph{cubically decomposed algebra}\footnote{This
definition is slightly more general than in \cite[Definition 6]{MR3207578}
because we do not demand that $A$ is unital.} over $k$ is the datum
$(A,(I_{i}^{\pm}),\tau)$:

\begin{itemize}
\item an associative $k$-algebra $A$;

\item two-sided ideals $I_{i}^{+},I_{i}^{-}$ such that $I_{i}^{+}+I_{i}^{-}=A
$ for $i=1,\ldots,n$;

\item writing $I_{i}^{0}:=I_{i}^{+}\cap I_{i}^{-}$ and $I_{tr}:=I_{1}^{0}%
\cap\cdots\cap I_{n}^{0}$, a $k$-linear map (called \emph{trace})%
\[
\tau:I_{tr}/[I_{tr},A]\rightarrow k\text{.}%
\]

\end{itemize}
\end{definition}

The essence of Beilinson's residue construction uses nothing but the above
datum. The reader should therefore not be discouraged by the involved actual
construction of it:\medskip

Below $\operatorname*{Hom}\nolimits_{k}(-,-)$ refers to plain $k$-vector space
homomorphisms without any further conditions.

\begin{definition}
[\cite{MR565095}]\label{def_HigherAdeleOperatorIdeals}Suppose $X/k$ is a
finite type reduced scheme of pure dimension $n$.

\begin{enumerate}
\item Let $\triangle=\{(\eta_{0}>\cdots>\eta_{i})\}$ be given and $M$ a
finitely generated $\mathcal{O}_{\eta_{0}}$-module. Then a \emph{lattice} in
$M$ is a finitely generated $\mathcal{O}_{\eta_{1}}$-module $L\subseteq M $
such that $\mathcal{O}_{\eta_{0}}\cdot L=M$.

\item For any quasi-coherent sheaf $M$ on $X$ define $M_{\triangle
}:=A(\triangle,M)$.

\item Write $\triangle^{\prime}:=\left.  _{\eta_{0}}\triangle\right.
=\{(\eta_{1}>\cdots>\eta_{n})\}$. Suppose $M_{1},M_{2}$ are finitely generated
$\mathcal{O}_{\eta_{0}}$-modules. Let $\operatorname*{Hom}\nolimits_{\triangle
}(M_{1},M_{2})$ be the $k$-submodule of those $f\in\operatorname*{Hom}%
\nolimits_{k}(M_{1\triangle},M_{2\triangle})$ such that for all lattices
$L_{1}\subset M_{1},L_{2}\subset M_{2}$, there exist lattices $L_{1}^{\prime
}\subset M_{1},L_{2}^{\prime}\subset M_{2}$ such that%
\begin{equation}
L_{1}^{\prime}\subseteq L_{1},\qquad L_{2}\subseteq L_{2}^{\prime},\qquad
f(L_{1\triangle^{\prime}}^{\prime})\subseteq L_{2\triangle^{\prime}},\qquad
f(L_{1\triangle^{\prime}})\subseteq L_{2\triangle^{\prime}}^{\prime
}\label{TATEMATRIX_l9}%
\end{equation}
and for all such $L_{1},L_{1}^{\prime},L_{2},L_{2}^{\prime}$ the induced $k
$-linear map%
\begin{equation}
\overline{f}:(L_{1}/L_{1}^{\prime})_{\triangle^{\prime}}\rightarrow
(L_{2}^{\prime}/L_{2})_{\triangle^{\prime}}\label{TATEMATRIX_l12}%
\end{equation}
lies in $\operatorname*{Hom}\nolimits_{\triangle^{\prime}}(L_{1}/L_{1}%
^{\prime},L_{2}^{\prime}/L_{2})$. Define $\operatorname*{Hom}%
\nolimits_{\varnothing}(-,-) $ as $\operatorname*{Hom}\nolimits_{k}(-,-)$.

\item Define $I_{1\triangle}^{+}(M_{1},M_{2})$ to consist of those
$f\in\operatorname*{Hom}\nolimits_{\triangle}(M_{1},M_{2})$ such that there
exists a lattice $L\subset M_{2}$ with $f(M_{1\triangle})\subseteq
L_{\triangle^{\prime}}$. Respectively, $I_{1\triangle}^{-}(M_{1},M_{2})$
consists of those such that there exists a lattice $L\subset M_{1}$ with
$f(L_{\triangle^{\prime}})=0$. Next, for $i=2,\ldots,n$ and both $+/-$ define
$I_{i\triangle}^{\pm}(M_{1},M_{2})$ as those $f\in\operatorname*{Hom}%
\nolimits_{\triangle}(M_{1},M_{2})$ such that for all lattices $L_{1}%
,L_{1}^{\prime},L_{2},L_{2}^{\prime}$ as in Eq. \ref{TATEMATRIX_l9} we have%
\begin{equation}
\overline{f}\in I_{(i-1)\triangle^{\prime}}^{\pm}(L_{1}/L_{1}^{\prime}%
,L_{2}^{\prime}/L_{2})\text{.}\label{TATEMATRIX_l12p}%
\end{equation}

\end{enumerate}
\end{definition}

A discussion around this type of structure can be found in Osipov
\cite{MR2314612}. It can be related to topologizations of $n$-local fields
\cite{MR3161556}, \cite{MR1213064}. We refer the reader especially to
Yekutieli's work in the context of topological higher local fields
\cite{MR3317764}. Note the similarity to Definitions \ref{def_onedim_lattice}
and \ref{def_onedim_compactanddiscrete}. The above definition leads us to the
central object of study:

\begin{definition}
[\cite{MR565095}]In the context of the previous definition, let%
\[
E_{\triangle}:=\operatorname*{Hom}\nolimits_{\triangle}(\mathcal{O}_{\eta_{0}%
},\mathcal{O}_{\eta_{0}})\subseteq\operatorname*{End}\nolimits_{k}%
(\mathcal{O}_{X\triangle},\mathcal{O}_{X\triangle})\text{.}%
\]
Write $I_{i\triangle}^{\pm}\subseteq E_{\triangle}$ for $I_{i\triangle}^{\pm
}(\mathcal{O}_{\eta_{0}},\mathcal{O}_{\eta_{0}})$ and $i=1,\ldots,n $.
\end{definition}

\begin{example}
[toy example \cite{MR3207578}]\label{example_infinitematrices}The above
definition can easily be confusing. It is helpful to look at the structurally
simpler, but essentially equivalent case of infinite matrix algebras first:
For any associative algebra $R$ define%
\begin{equation}
E(R):=\{\phi=(\phi_{ij})_{i,j\in\mathbf{Z}},\phi_{ij}\in R\mid\exists K_{\phi
}:\left\vert i-j\right\vert >K_{\phi}\Rightarrow\phi_{ij}%
=0\}\label{TATEMATRIX_l1}%
\end{equation}
and equip it with the usual matrix multiplication. Then%
\begin{align*}
I^{+}(R):=  &  \{\phi\in E(R)\mid\exists B_{\phi}:i<B_{\phi}\Rightarrow
\phi_{ij}=0\}\\
I^{-}(R):=  &  \{\phi\in E(R)\mid\exists B_{\phi}:j>B_{\phi}\Rightarrow
\phi_{ij}=0\}
\end{align*}
define two-sided ideals in $E(R)$ with $I^{+}(R)+I^{-}(R)=E(R)$. We may
iterate this construction so that $I_{i}^{\pm}:=(EE\cdots I^{\pm}\cdots E)(R)
$ (with $I^{\pm}$ in the $i$-th place) defines a two-sided ideal of
$E^{n}(R)=E\cdots E(R)$. One checks that $(E^{n}R,\{I_{i}^{\pm}%
\},\operatorname*{tr})$ is an $n$-fold cubically decomposed algebra
\cite[\S 1.1]{MR3207578}.%
\[%
{\includegraphics[
height=1.9515in,
width=2.7031in
]%
{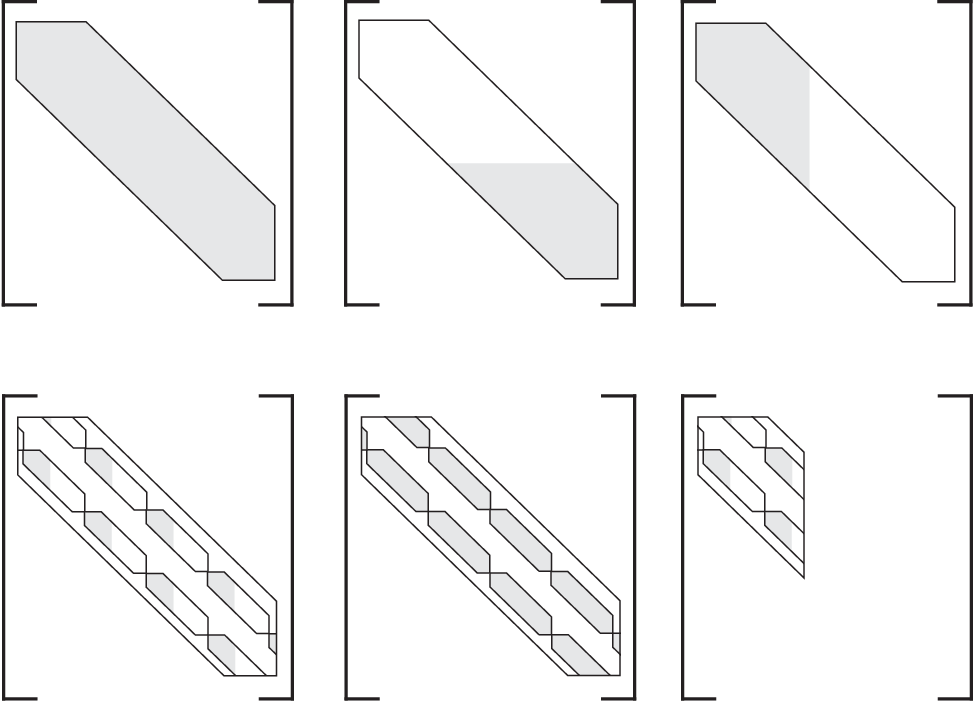}%
}%
\]
The top row displays typical matrices from $E(R)$, $I^{+}(R)$, $I^{-}(R)$
respectively. The lower row illustrates double infinite matrix constructions,
namely $E(I^{-}(R))$, $E(E(R))$ and $I^{-}(I^{-}(R))$ respectively. Although
defined in a more complicated way, the ideals of Definition
\ref{def_HigherAdeleOperatorIdeals} have the same structural properties as
these infinite matrix ideals. Note that $E^{n}(R)$ has a natural $R$-linear
action on the Laurent polynomial ring $R[t_{1}^{\pm},\ldots,t_{n}^{\pm}]$.
\end{example}

\begin{proposition}
[{\cite[Thm. (a)]{MR565095}}]%
\label{Prop_LocalEAtFullFlagIsCubicallyDecomposedAlgebra}Suppose $X/k$ is a
finite type reduced scheme of pure dimension $n$. Suppose $\triangle
=\{(\eta_{0}>\cdots>\eta_{n})\}$ is a single-element set such that
$\operatorname*{codim}_{X}\eta_{i}=i$.

\begin{enumerate}
\item Then $(E_{\triangle},(I_{i\triangle}^{\pm}),\operatorname*{tr}%
\nolimits_{I_{tr}})$ is a unital cubically decomposed algebra over $k$, where
$\operatorname*{tr}\nolimits_{I_{tr}}$ refers to Tate's operator trace (cf.
Prop. \ref{BT_PropTateTraceConstruction}).

\item For every $f\in I_{tr}$ there exists a finite-dimensional $f$-stable $k
$-vector subspace $W\subseteq E_{\triangle}$ such that $\operatorname*{tr}%
\nolimits_{I_{tr}}f=\operatorname*{tr}\nolimits_{W}f$.
\end{enumerate}
\end{proposition}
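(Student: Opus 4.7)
The proof proceeds by induction on $n$. The base case $n=1$ is essentially Tate's construction reviewed in \S \ref{marker_sect_TateOriginalConstruction}, together with the verification of the Tate trace properties \textbf{T1}--\textbf{T5} from Prop.~\ref{BT_PropTateTraceConstruction}. For the inductive step one exploits that the recursive definitions of $\operatorname{Hom}_\triangle$ and $I^\pm_{i\triangle}$ are engineered so that passing to a lattice quotient $L/L'$ shortens the flag by one: the condition at level $i \geq 2$ demands exactly that the induced map $\overline{f}$ of Eq.~\ref{TATEMATRIX_l12} lie in $I^\pm_{(i-1)\triangle'}$, where $\triangle' = {}_{\eta_0}\triangle$ is a flag of length $n$ in the $(n-1)$-dimensional scheme $\overline{\{\eta_1\}}$, so the inductive hypothesis applies at the residue level.

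For part (1), $E_\triangle$ is visibly unital (the identity satisfies the triangle condition trivially) and associative. That $I^\pm_{i\triangle}$ are two-sided ideals is checked at $i=1$ directly from the defining conditions — composition on either side with an element of $E_\triangle$ preserves the property of having image in a lattice, respectively of vanishing on a lattice, since elements of $E_\triangle$ send lattices into lattices — and promoted to $i \geq 2$ by naturality of $f \mapsto \overline{f}$ combined with induction. The identity $I^+_{i\triangle} + I^-_{i\triangle} = E_\triangle$ reduces at $i=1$ to producing idempotents $P^\pm \in I^\pm_{1\triangle}$ with $P^+ + P^- = \mathbf{1}$; since by Prop.~\ref{TATE_StructureOfLocalAdelesProp} the space $\mathcal{O}_{X\triangle}$ is an ind-pro limit along lattices, any $k$-linear splitting of a lattice inclusion $L \hookrightarrow \mathcal{O}_{X\triangle}$ furnishes such a pair. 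For $i \geq 2$ one lifts compatible splittings at the residue level back through the recursion.

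The trace $\tau: I_{tr}/[I_{tr}, E_\triangle] \to k$ is the subtler point. I would first verify inductively that every $f \in I_{tr}$ is finite-potent: by definition $f$ factors through a slab $(L/L')_{\triangle'}$, where the induced map belongs to the $(n-1)$-dimensional $I_{tr}$; by the inductive hypothesis some power of $\overline{f}$ has finite-dimensional image, and unwinding this back to $\mathcal{O}_{X\triangle}$ exhibits a power $f^m$ with finite-dimensional image. The same reasoning applied to iterated compositions shows that $I_{tr}$ is a finite-potent family, so axiom \textbf{T4} of Prop.~\ref{BT_PropTateTraceConstruction} gives a $k$-linear functional on $I_{tr}$. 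Vanishing on $[I_{tr}, E_\triangle]$ then follows from \textbf{T5}: if $f \in I_{tr}$, $g \in E_\triangle$, both $fg$ and $gf$ lie in $I_{tr}$ and are finite-potent, so $\operatorname{tr}(fg) = \operatorname{tr}(gf)$.

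For part (2), given $f \in I_{tr}$ I would iterate the lattice filtration explicitly: at the top level pick $L' \subseteq L$ in $\mathcal{O}_{\eta_0}$ with $f(\mathcal{O}_{X\triangle}) \subseteq L_{\triangle'}$ and $f(L'_{\triangle'}) = 0$; the induced map on $(L/L')_{\triangle'}$ again lies in the $(n-1)$-dimensional $I_{tr}$, so continuing $n$ times lands in a finite-dimensional $f$-stable $k$-subspace $W$. Axiom \textbf{T2} applied inductively along the nested chain of lattices, combined with \textbf{T3} to discard the nilpotent complementary pieces, identifies $\operatorname{tr}_{I_{tr}} f$ with the honest finite-dimensional trace on $W$. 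The main obstacle throughout is bookkeeping: one must realize the decompositions $I^+_i + I^-_i = E_\triangle$ coherently through the recursion, and check that $\tau$ is independent of the many lattice choices — the latter reduces, by repeated use of \textbf{T2} and \textbf{T3}, to the observation that any two admissible factorizations differ by a nilpotent correction with vanishing Tate trace.
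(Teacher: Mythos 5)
Your proof follows essentially the same inductive scheme as the paper's: two-sided ideals and idempotent splittings from lattice projections, finite-potence via repeated passage to lattice quotients $L'/(L\cap L')$, reduction to a finite-dimensional trace with \textbf{T2}/\textbf{T3}, and \textbf{T5} for vanishing on commutators. One point in your favor: you explicitly address $k$-linearity of the trace by arguing that $I_{tr}$ is a finite-potent family and invoking \textbf{T4} --- a detail the paper glosses over even though Definition~\ref{BT_DefCubicallyDecompAlgebra} requires it; conversely, the paper's explicit factorization of $f^{\circ 3^{n}}$ through the tower of lattice quotients (Eq.~\ref{lb6}) is sharper than your ``unwinding back to $\mathcal{O}_{X\triangle}$,'' and you should spell out that the same factorization, after replacing the various lattices by common ones (intersections for kernels, sums for images), underlies your finite-potent family claim.
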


\begin{proof}
One easily sees that the $I_{i}^{\pm}$ are two-sided ideals. For $I_{i}%
^{+}+I_{i}^{-}=E_{\triangle}$ pick any lattice on the suitable level of the
inductive definition and any vector space idempotent projecting on it, call it
$P^{+}$. Then $P^{-}:=\mathbf{1}-P^{+}$ contains the lattice in the kernel.
Clearly, $\mathbf{1}=P^{+}+P^{-}$ and $P^{\pm}\in I_{i}^{\pm}$. It remains to
check that Tate's trace is defined on $I_{tr}=I_{1}^{0}\cap\cdots\cap
I_{n}^{0}$, i.e. that all operators in this ideal are finite-potent, one can
argue by induction: Suppose $f\in I_{tr}(V,V)$ for some $V$. In particular
$f\in I_{n}^{0}(V,V)$, i.e. there exists a lattice $L\subset V$ such that
$fL=0$ and a lattice $L^{\prime}\subset V$ such that $fV\subseteq L^{\prime}$.
We observe that $f^{\circ3^{n}}:V\rightarrow V$ factors as%
\begin{equation}
f^{\circ3^{n}}:V\overset{f^{\circ3^{n-1}}}{\longrightarrow}L^{\prime}%
\overset{\operatorname*{quot}}{\longrightarrow}\frac{L^{\prime}}{L\cap
L^{\prime}}\overset{\overline{f}^{\circ3^{n-1}}}{\longrightarrow}%
\frac{L^{\prime}}{L\cap L^{\prime}}\overset{f^{\circ3^{n-1}}}{\longrightarrow
}L^{\prime}\overset{\operatorname*{incl}}{\longrightarrow}V\text{.}\label{lb6}%
\end{equation}
As $L,L^{\prime}$ are lattices, $L\cap L^{\prime}$ is a lattice, so we may
take $L_{1}^{\prime}=L_{2}:=L\cap L^{\prime}$ and $L_{1}=L_{2}^{\prime
}:=L^{\prime}$ as choices in Eq. \ref{TATEMATRIX_l9}. As we also have $f\in
I_{n-1}^{0}$, this yields that $\overline{f}\in I_{n-1}^{0}(L^{\prime}/(L\cap
L^{\prime}),L^{\prime}/(L\cap L^{\prime}))$. Thus, using $V:=L^{\prime}/(L\cap
L^{\prime})$ the middle term $\overline{f}^{\circ3^{n-1}}$ in Eq. \ref{lb6}
again satisfies the assumptions for the induction step, just replace $n$ with
$n-1$. Proceed down to $n=1$, where the middle term $\overline{f}^{\circ1}$ is
a morphism of finite-dimensional $k$-vector spaces. Combining all induction
steps, this shows that for every $f\in I_{tr} $, $f^{\circ3^{n}}$ factors
through a finite-dimensional $k$-vector space $W $, so a power of $f$ indeed
has finite-dimensional image over $k$, i.e. $f$ is finite-potent. Similarly,
the computation of the trace can be reduced to a classical trace: Again, we
use induction. Assume $f\in I_{n}^{0} $. As the lattices $L,L^{\prime}$
(chosen as above) are $f$-stable, using axiom \textbf{T2} twice yields%
\[
\operatorname*{tr}\nolimits_{V}f=\operatorname*{tr}\nolimits_{L^{\prime}%
}f+\operatorname*{tr}\nolimits_{V/L^{\prime}}f=(\operatorname*{tr}%
\nolimits_{L\cap L^{\prime}}f+\operatorname*{tr}\nolimits_{L^{\prime}/L\cap
L^{\prime}}f)+\operatorname*{tr}\nolimits_{V/L^{\prime}}f\text{.}%
\]
As $\overline{f}\equiv0$ in the quotient $V/L^{\prime}$ as well as $f\mid
_{L}=0$ when restricted to $L$ (and thus $L\cap L^{\prime}$), axiom
\textbf{T3} reduces the above to $\operatorname*{tr}\nolimits_{L^{\prime
}/L\cap L^{\prime}}\overline{f}$. Hence, we have reduced to $\overline
{f}:L^{\prime}/(L\cap L^{\prime})\rightarrow L^{\prime}/(L\cap L^{\prime})$.
As before it follows that if we also have $f\in I_{n-1}^{0}(V,V)$, then
$\overline{f}\in I_{n-1}^{0}(L^{\prime}/(L\cap L^{\prime}),L^{\prime}/(L\cap
L^{\prime}))$ and using $V:=L^{\prime}/(L\cap L^{\prime})$ we again satisfy
our initial assumptions for the induction step. If $f\in I_{tr}$, this
inductively yields%
\[
\operatorname*{tr}\nolimits_{V}f=\cdots=\operatorname*{tr}\nolimits_{W}%
\overline{f}\text{,}%
\]
where $W$ is a finite-dimensional $k$-vector space. Hence, by \textbf{T1} the
last trace $\operatorname*{tr}\nolimits_{W}\overline{f}$ is the ordinary trace
of an endomorphism. For $f\in\lbrack I_{tr},A]$ use \textbf{T5} to see that
$\operatorname*{tr}_{V}f=0$.
\end{proof}

\section{\label{marker_BeilLocalConstruction}Beilinson's construction}

In this section we try to be brief. A motivated explanation can be found in
\S \ref{section_Etiology}.

\subsection{Beilinson's functional\label{subsect_BeilinsonLieFunctional}}

Let us recall Beilinson's construction of the cocycle \cite{MR565095}. We
begin with some general considerations:

\begin{definition}
\label{marker_DefMixedWedges}For $V$ a vector space and $V^{\prime}\subseteq
V$ a subspace, we define%
\[
V^{\prime}\wedge%
{\textstyle\bigwedge\nolimits^{r-1}}
V=\left\{
\begin{array}
[c]{l}%
\text{subspace of }%
{\textstyle\bigwedge\nolimits^{r}}
V\text{ generated by}\\
v^{\prime}\wedge v_{1}\wedge\cdots\wedge v_{r-1}\text{ with }v^{\prime}\in
V^{\prime}\text{, }v_{i}\in V
\end{array}
\right\}
\]

\end{definition}

\textit{Beware:} Note that $V^{\prime}\wedge(-)$ is by no means an exact
functor in any possible sense. It behaves quite differently from $V^{\prime
}\otimes(-)$.

Let $\mathfrak{g}:=A_{Lie}$ be the Lie algebra of an associative algebra $A$
and $M$ a $\mathfrak{g}$-module. Then one has the Chevalley-Eilenberg complex
$C_{i}^{\operatorname*{Lie}}(\mathfrak{g},M):=M\otimes%
{\textstyle\bigwedge\nolimits^{i}}
\mathfrak{g}$, see \cite[\S 10.1.3]{MR1217970} for details. Its homology is
ordinary Lie homology. We abbreviate $C_{i}^{\operatorname*{Lie}}%
(\mathfrak{g}):=C_{i}^{\operatorname*{Lie}}(\mathfrak{g},k)$ for trivial
coefficients. Let $\mathfrak{j}\subseteq\mathfrak{g}$ be a Lie ideal. Then the
vector spaces%
\begin{equation}
CE(\mathfrak{j})_{r}:=\mathfrak{j}\wedge%
{\textstyle\bigwedge\nolimits^{r-1}}
\mathfrak{g}\label{la10}%
\end{equation}
for $r\geq1$ and $CE(\mathfrak{j})_{0}:=k$ define a subcomplex of
$C_{r}^{\operatorname*{Lie}}(\mathfrak{g},k)$ via the identification%
\[
j\wedge f_{1}\wedge\cdots\wedge f_{r-1}\approx1\otimes j\wedge f_{1}%
\wedge\cdots\wedge f_{r-1}\text{.}%
\]
The differential turns into the nice expression (cf. \cite[first
equation]{MR565095})%
\begin{equation}
\delta(f_{0}\wedge f_{1}\wedge\ldots\wedge f_{r}):=%
{\textstyle\sum\nolimits_{0\leq i<j\leq r}}
(-1)^{i+j}[f_{i},f_{j}]\wedge f_{0}\wedge\ldots\widehat{f_{i}}\ldots
\widehat{f_{j}}\ldots\wedge f_{r}\text{.}\label{lBT_effectiveCEdifferential}%
\end{equation}
\textit{Beware:}\ Due to the difference between $\mathfrak{j}\wedge(-)$ and
$\mathfrak{j}\otimes(-)$ the homology of $CE(\mathfrak{j})_{\bullet}$ does
\textit{not} agree with the Lie homology $H_{n}(\mathfrak{g},\mathfrak{j})$
with $\mathfrak{j}$ seen as a $\mathfrak{g}$-module. It is better viewed as
relative Lie homology, as explained in \S \ref{section_Etiology}.\medskip

Now suppose $A$ is given the extra structure of a cubically decomposed algebra
(cf. Definition \ref{BT_DefCubicallyDecompAlgebra}), i.e.

\begin{itemize}
\item two-sided ideals $I_{i}^{+},I_{i}^{-}$ such that $I_{i}^{+}+I_{i}^{-}=A
$ for $i=1,\ldots,n$;

\item writing $I_{i}^{0}:=I_{i}^{+}\cap I_{i}^{-}$ and $I_{tr}:=I_{1}^{0}%
\cap\cdots\cap I_{n}^{0}$, a $k$-linear map%
\[
\tau:I_{tr}/[I_{tr},A]\rightarrow k\text{.}%
\]

\end{itemize}

For any elements $s_{1},\ldots,s_{n}\in\{+,-,0\}$ we define the \emph{degree
}$\deg(s_{1},\ldots,s_{n}):=1+\#\{i\mid s_{i}=0\}$. Given the above datum,
Beilinson constructs a very interesting family of complexes:

\begin{definition}
[\cite{MR565095}]Define%
\begin{equation}
\left.  ^{\wedge}T_{\bullet}^{p}\right.  :=\bigoplus_{\substack{s_{1}%
,\ldots,s_{n}\in\{\pm,0\} \\\deg(s_{1}\ldots s_{n})=p}}\bigcap_{i=1}%
^{n}\left\{
\begin{array}
[c]{ll}%
CE(I_{i}^{+})_{\bullet} & \text{for }s_{i}=+\\
CE(I_{i}^{-})_{\bullet} & \text{for }s_{i}=-\\
CE(I_{i}^{+})_{\bullet}\cap CE(I_{i}^{-})_{\bullet} & \text{for }s_{i}=0
\end{array}
\right. \label{lBT_DefComplexTWedge}%
\end{equation}
and $\left.  ^{\wedge}T_{\bullet}^{0}\right.  :=CE(\mathfrak{g})_{\bullet}$.
View them as complexes in the subscript index $(-)_{\bullet}$.
\end{definition}

Each $CE(I_{i}^{\pm})_{\bullet}$ is a complex and all their differentials are
defined by the same formula, namely Eq. \ref{lBT_effectiveCEdifferential}.
Thus, the intersection of these complexes has a well-defined differential and
is a complex itself. Next, Beilinson shows that%
\begin{equation}
0\longrightarrow\left.  ^{\wedge}T_{\bullet}^{n+1}\right.  \longrightarrow
\cdots\longrightarrow\left.  ^{\wedge}T_{\bullet}^{1}\right.  \longrightarrow
\left.  ^{\wedge}T_{\bullet}^{0}\right.  \longrightarrow0\label{lwcl9}%
\end{equation}
is an exact sequence (now indexed by the superscript) with respect to a
suitably defined differential coming from a structure as a cubical object (see
\cite[\S 1]{MR565095} or \cite[Lemma 18]{MR3207578}). Thus, we obtain a
bicomplex%
\begin{equation}%
\begin{array}
[c]{ccccccl}
&  &  &  & \rightarrow\cdots\rightarrow & \left.  ^{\wedge}T_{2}^{0}\right.  &
\rightarrow0\\
& \text{etc.} &  & \downarrow &  & \downarrow & \\
0\rightarrow & \left.  ^{\wedge}T_{1}^{n+1}\right.  & \rightarrow & \left.
^{\wedge}T_{1}^{n}\right.  & \rightarrow\cdots\rightarrow & \left.  ^{\wedge
}T_{1}^{0}\right.  & \rightarrow0\\
& \downarrow &  & \downarrow &  & \downarrow & \\
0\rightarrow & \left.  ^{\wedge}T_{0}^{n+1}\right.  & \rightarrow & \left.
^{\wedge}T_{0}^{n}\right.  & \rightarrow\cdots\rightarrow & \left.  ^{\wedge
}T_{0}^{0}\right.  & \rightarrow0\text{.}%
\end{array}
\label{la11}%
\end{equation}
Its support is horizontally bounded in degrees $[n+1,0]$, vertically
$(+\infty,0]$. As a result, the associated two bicomplex spectral sequences
are convergent. Since the rows are exact, the one with $E^{0}$-page
differential in direction `$\rightarrow$' vanishes already on the $E^{1}%
$-page. Thus, this (and therefore both) spectral sequences converge to zero.
Now focus on the second spectral sequence, the one with $E^{0}$-page
differential in direction `$\downarrow$'. Since $E_{\bullet,\bullet}^{n+2}=0$
by horizontal concentration in $[n+1,0]$, the differential $d:E_{n+1,1}%
^{n+1}\rightarrow E_{0,n+1}^{n+1}$ on the $\left(  n+1\right)  $-st page must
be an isomorphism. Upon composing its inverse with suitable edge maps,
Beilinson gets a morphism%
\begin{equation}
\phi_{Beil}:H_{n+1}(\mathfrak{g},k)\overset{\sim}{\longrightarrow}%
H_{n+1}(CE(\mathfrak{g}))\overset{\text{edge}}{\longrightarrow}E_{0,n+1}%
^{n+1}\underset{d^{-1}}{\overset{\sim}{\longrightarrow}}E_{n+1,1}%
^{n+1}\overset{\text{edge}}{\longrightarrow}H_{1}(^{\wedge}T_{\bullet}%
^{n+1})\overset{\tau}{\longrightarrow}k\text{.}\label{la29}%
\end{equation}
For the left-hand side isomorphism note that $H_{n+1}(\mathfrak{g},k)\cong
H_{n+1}(CE(\mathfrak{g}))$ just by definition of Lie homology
(\textit{Beware:} this is true for $CE(\mathfrak{j})$ if and only if
$\mathfrak{j}=\mathfrak{g}$), and $\left.  ^{\wedge}T_{\bullet}^{0}\right.
:=CE(\mathfrak{g})_{\bullet}$ by definition. For the right-hand side map
$\tau$ observe that
\begin{equation}
H_{1}(^{\wedge}T_{\bullet}^{n+1})=H_{1}(%
{\textstyle\bigcap\nolimits_{i=1}^{n}}
{\textstyle\bigcap\nolimits_{s=\{+,-\}}}
CE(I_{i}^{s})_{\bullet})=\frac{\mathfrak{j}}{[\mathfrak{j},\mathfrak{g}%
]}\label{la12}%
\end{equation}
for $\mathfrak{j}:=%
{\textstyle\bigcap\nolimits_{i=1}^{n}}
{\textstyle\bigcap\nolimits_{s=\{+,-\}}}
I_{i}^{s}=I_{tr}$. Using the Universal Coefficient Theorem in Lie algebra
homology, this is the same as giving an element in $H^{n+1}(\mathfrak{g}%
,k)\cong H_{n+1}(\mathfrak{g},k)^{\ast}$. This is the proof for Beilinson's
result \cite[Lemma 1 (a)]{MR565095}. We summarize:

\begin{proposition}
\label{Prop_BeilResidueMapConstructed}(Beilinson) For every cubically
decomposed algebra $(A,(I_{i}^{\pm}),\tau)$ and $\mathfrak{g}:=A_{Lie}$ there
is a canonical morphism%
\[
\phi_{Beil}:H_{n+1}(\mathfrak{g},k)\longrightarrow k\text{,}%
\]
or equivalently a canonical Lie cohomology class in $H^{n+1}(\mathfrak{g},k)$.
It is functorial in morphisms of cubically decomposed algebras.
\end{proposition}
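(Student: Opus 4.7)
The plan is to verify each step of the construction already sketched by Eq.~\ref{la29}: the subcomplex properties of the $CE(I_i^\pm)_\bullet$, exactness of the horizontal sequence \ref{lwcl9}, convergence of the associated bicomplex spectral sequences, and functoriality. None of these is conceptually deep on its own, but the cubical combinatorics in the middle step requires care.

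First I would verify that each $CE(I_i^\pm)_\bullet$ of Eq.~\ref{la10} is a subcomplex of $C^{\operatorname*{Lie}}_\bullet(\mathfrak{g})$. Inspecting the differential \ref{lBT_effectiveCEdifferential} one sees only commutators $[f_a,f_b]$; since $I_i^\pm$ is two-sided, any such commutator lies in $I_i^\pm$ whenever one of the factors does, which is exactly what the mixed wedge of Definition~\ref{marker_DefMixedWedges} demands. The same argument applies to intersections, so each $^{\wedge}T^p_\bullet$ is well-defined. In degree one the differential specialises to $f_0\wedge f_1\mapsto [f_0,f_1]$, which at once gives $H_1({}^{\wedge}T_\bullet^{n+1}) = I_{tr}/[I_{tr},\mathfrak{g}]$ as in Eq.~\ref{la12}; since $[I_{tr},\mathfrak{g}] = [I_{tr},A]$ as subspaces of $A$, this is exactly the domain of $\tau$.

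The main obstacle will be exactness of the horizontal sequence \ref{lwcl9}. The idea is to exhibit each row at a fixed vertical degree as a Koszul-type total complex assembled out of the $n$ three-term sequences
\[ 0 \longrightarrow I_i^+\cap I_i^- \longrightarrow I_i^+\oplus I_i^- \longrightarrow A \longrightarrow 0, \]
whose acyclicity uses exactly the cubical axiom $I_i^+ + I_i^- = A$. The grading by $\deg(s_1,\ldots,s_n) = 1+\#\{i : s_i = 0\}$ tracks how many coordinates sit in the middle slot of a three-term sequence, and the horizontal differential is the alternating sum of the face maps coming from the inclusions $I_i^+\cap I_i^-\hookrightarrow I_i^\pm \hookrightarrow A$. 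Since we work over the field $k$ everything is flat, so a standard K\"unneth-type induction on $n$ propagates acyclicity through the cubical totalization. Translating this back through the mixed wedge notation is pure bookkeeping, but it is the one place where the sum condition on the ideals gets used essentially.

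With the rows of the bicomplex \ref{la11} exact, the standard spectral sequence machinery finishes the job. The bicomplex is horizontally bounded in columns $[0,n+1]$ and vertically bounded below, so both of its spectral sequences converge. The one with horizontal $E^0$-differential degenerates to zero at $E^1$, hence both abut to zero. For the other, horizontal concentration precludes any non-trivial differential on pages beyond the $(n+1)$-st, forcing $d\colon E^{n+1}_{n+1,1}\to E^{n+1}_{0,n+1}$ to be an isomorphism. Composing its inverse with the edge maps of Eq.~\ref{la29} and with $\tau$ produces $\phi_{Beil}$. Functoriality is then immediate: a morphism of cubically decomposed algebras, being one that respects each $I_i^\pm$ and commutes with $\tau$, induces a morphism of bicomplexes \ref{la11}, hence a morphism of spectral sequences, hence compatibility at every stage of the composite \ref{la29}.
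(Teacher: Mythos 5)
Your plan follows the paper's outline closely, and most of the individual steps are accurate: the subcomplex property via two-sidedness of the $I_i^\pm$, the identification $H_1({}^{\wedge}T_\bullet^{n+1}) = I_{tr}/[I_{tr},A]$ (Eq.~\ref{la12}), the spectral-sequence degree bookkeeping, and the functoriality argument are all sound. The gap is in your argument for exactness of the horizontal sequence~\ref{lwcl9}, which you treat as ``pure bookkeeping'' but which is in fact the substantive step.

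You propose to read each row $\left.^{\wedge}T_r^\bullet\right.$ as a Koszul totalization of the $n$ three-term sequences $0\to I_i^+\cap I_i^-\to I_i^+\oplus I_i^-\to A\to 0$ and invoke a K\"unneth-type induction over $k$. But the terms $\left.^{\wedge}T_r^p\right.$ are \emph{intersections} of mixed-wedge subspaces of $\bigwedge^r\mathfrak{g}$, not tensor products of $n$ independent factors, and intersection of subspaces is not an exact functor: there is no K\"unneth theorem that applies. The failure is already visible at $r=1$: surjectivity of $\left.^{\wedge}T_1^1\right.\to\left.^{\wedge}T_1^0\right.=A$ amounts to the claim $\sum_{s\in\{\pm\}^n}\bigcap_i I_i^{s_i}=A$, and this does \emph{not} follow from the single-index conditions $I_i^++I_i^-=A$ alone. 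For instance take $n=2$, $A=k^3$ with the zero multiplication (so every subspace is a two-sided ideal), $I_1^+=ke_1$, $I_1^-=ke_2\oplus ke_3$, $I_2^+=k(e_1+e_2)$, $I_2^-=ke_3\oplus k(e_1-e_2)$: here each $I_i^++I_i^-=A$, yet the four intersections $I_1^{s_1}\cap I_2^{s_2}$ span only $ke_3$. This is precisely why the paper defers the exactness of~\ref{lwcl9} to Beilinson's original paper and to \cite[Lemma 18]{MR3207578} rather than deducing it formally; whatever the correct argument is, it has to use more than the sum condition (in the geometric applications, the existence of compatible lattice filtrations or a system of good idempotents as in Definition~\ref{marker_DefSysOfGoodIdempotents} supplies the missing structure and produces a contracting homotopy). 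As written, your K\"unneth step is where the proof would break down.
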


Thus, if a commutative $k$-algebra $K$ embeds as $K\hookrightarrow A$, we get
a morphism%
\[
\operatorname*{res}:\Omega_{K/k}^{n}\overset{(\diamond)}{\longrightarrow
}H_{n+1}(\mathfrak{g},k)\overset{\phi_{Beil}}{\longrightarrow}k
\]%
\[
f_{0}\mathrm{d}f_{1}\wedge\cdots\wedge\mathrm{d}f_{n}\longmapsto f_{0}\wedge
f_{1}\wedge\cdots\wedge f_{n}\longmapsto\phi_{Beil}(f_{0}\wedge\cdots\wedge
f_{n})
\]
It turns out to be the residue. This is essentially \cite[Lemma 1 (b) and Thm.
(a)]{MR565095}. For a very explicit proof of this see \cite[Thm. 4 and Thm.
5]{MR3207578}. Note that $(\diamond)$ is not really a morphism; it does not
respect the relation $\mathrm{d}(xy)=x\mathrm{d}y+y\mathrm{d}x$. This washes
out after composing with $\phi_{Beil}$.

\begin{remark}
[reduces to Tate's theory]It is a general fact from homological algebra that
the connecting morphism coming from the snake lemma agrees with the inverse of
the suitable differential in the bicomplex spectral sequence applied to the
two-row bicomplex which one feeds into the snake lemma. If we apply this
remark to Eq. \ref{la6}, we readily see how Eq. \ref{la29} transforms into Eq.
\ref{la7}. This also justifies why $d^{-1}:E_{0,n+1}^{n+1}\rightarrow
E_{n+1,1}^{n+1}$ is a natural choice to consider.
\end{remark}

\section{\label{section_Etiology}Etiology}

I will try to explain how one could read Tate's original article and naturally
be led to Beilinson's generalization. Clearly, I\ am just writing down a
possible interpretation here and quite likely it has no connection whatsoever
with the actual development of the ideas. Since the original papers
\cite{MR0227171}, \cite{MR565095} say very little about the underlying
creative process, this might be of some use. Of course, logically, this
section is superfluous.\medskip

I would have liked to begin by explaining Cartier's idea. Tate writes
\textquotedblleft I arrived at this treatment of residues by considering the
special features of the one-dimensional case, after discussing with Mumford an
approach of Cartier to Grothendieck's higher dimensional residue
symbol\textquotedblright\ \cite[p. 1]{MR0227171}. Pierre Cartier told me that
he has never published his approach, it was only disseminated in seminar talks
by Adrien Douady, whom we sadly cannot ask anymore. It seems possible that the
original formulation of Cartier's method has fallen into oblivion. Similarly,
John Tate told me that he does not remember more about the history than what
is documented in his article. So allow me to take Tate's method for granted
and proceed to Beilinson's generalization.\medskip

Firstly, let us reformulate Tate's original construction. As explained in
\S \ref{marker_sect_TateOriginalConstruction}, it begins with an exact
sequence of Lie modules%
\begin{equation}
0\longrightarrow I^{0}\longrightarrow I^{+}\oplus I^{-}\longrightarrow
E\longrightarrow0\text{.}\label{lwcl4}%
\end{equation}
We may read $I^{+}\oplus I^{-}$ as a Lie algebra itself and hope for $I^{0}$
being a Lie ideal in there, so that we could view the sequence as an extension
of Lie algebras. However, this fails (e.g. $[x\oplus x,a\oplus b]=[x,a]\oplus
\lbrack x,b]$ has no reason to be diagonal). There is an easy remedy, we
quotient out
\begin{equation}
0\longrightarrow I^{0}\longrightarrow I^{+}\oplus I^{-}\longrightarrow\left(
I^{+}\oplus I^{-}\right)  /I^{0}\longrightarrow0\label{lwcl2}%
\end{equation}
by $I^{-}$ and push the sequence out along the quotient map, giving%
\begin{equation}
0\longrightarrow I^{0}\overset{i}{\longrightarrow}I^{+}\overset{j}%
{\longrightarrow}I^{+}/I^{0}\longrightarrow0\text{.}\label{lwcl3}%
\end{equation}
Now $I^{0}$ is indeed a Lie ideal in $I^{+}$ so that this is an extension of
Lie algebras. We may take the homology of Lie algebras with trivial
coefficients, i.e. $H_{i}(-):=H_{i}(-,k)$. If $C_{i}^{\operatorname*{Lie}}(-)$
denotes the underlying Chevalley-Eilenberg complex, we get an obvious induced
morphism $j_{\ast}:C_{i}^{\operatorname*{Lie}}(I^{+})\rightarrow
C_{i}^{\operatorname*{Lie}}(I^{+}/I^{0})$, which we would like to fit into a
long exact sequence. To this end, define \emph{relative Lie homology}
$H_{i}(I^{+}\left.  \mathsf{rel}\right.  I^{0})$ simply as the co-cone of this
morphism $j_{\ast}$, so that we get a long exact sequence%
\begin{equation}
\cdots\rightarrow H_{i+1}(I^{+}/I^{0})\overset{d}{\rightarrow}H_{i}%
(I^{+}\left.  \mathsf{rel}\right.  I^{0})\rightarrow H_{i}(I^{+})\rightarrow
H_{i}(I^{+}/I^{0})\overset{d}{\rightarrow}\cdots\text{.}\label{lwcl1}%
\end{equation}

\begin{remark}
\label{rmk_DontUseCoefficients}This is not to be confused with the long exact
sequence in Lie homology $H_{i}(E,-)$ coming from viewing Eq. \ref{lwcl3} as a
short exact sequence of coefficient modules. In Eq. \ref{lwcl1} we change the
Lie algebra, not the coefficients.
\end{remark}

It would be nice to have a more explicit description of the relative homology
groups. Instead of just defining them as an abstract co-cone of complexes,
define it (quasi-isomorphically) as the kernel of the map $j_{\ast}$ of
Chevalley-Eilenberg complexes. Explicitly, this means that it is the kernel in%
\begin{equation}
0\rightarrow C_{i}^{\operatorname*{Lie}}(I^{+}\left.  \mathsf{rel}\right.
I^{0})\rightarrow\bigwedge\nolimits^{i}I^{+}\rightarrow\bigwedge
\nolimits^{i}I^{+}/I^{0}\rightarrow0\text{.}\label{lwcl6}%
\end{equation}
We see that $C_{i}^{\operatorname*{Lie}}(I^{+}\left.  \mathsf{rel}\right.
I^{0})=I^{0}\wedge\bigwedge\nolimits^{i-1}I^{+}$, the subspace spanned by
those exterior tensors with at least one slot lying in $I^{0}$; see Definition
\ref{marker_DefMixedWedges}. Next, let us address the question to compute the
connecting homomorphism $d$ in Eq. \ref{lwcl1}. Recall that it is constructed
by spelling out the underlying complexes and applying the snake lemma. In the
homological degree $H_{2}\overset{d}{\rightarrow}H_{1}$, this unravels as the
snake map of%
\begin{equation}
\xymatrix{ 0 \ar[r] & I^{0}\wedge I^{+} \ar[r] \ar[d]^{[-,-]} & I^{+}\wedge I^{+} \ar[r] \ar[d]^{[-,-]} & (I^{+}/I^{0})\wedge (I^{+}/I^{0}) \ar[r] \ar[d]^{[-,-]} & 0 \\ 0 \ar[r] & I^{0} \ar[r] & I^{+} \ar[r] & I^{+}/I^{0} \ar[r] & 0 }\label{lwcl10}%
\end{equation}
and by comparison with Diagram \ref{la6} we find that the connecting
homomorphism%
\begin{equation}
H_{2}(I^{+}/I^{0})\longrightarrow H_{1}(I^{+}\left.  \mathsf{rel}\right.
I^{0})\label{lwcl5}%
\end{equation}
agrees (after precomposing with $E\cong\left(  I^{+}\oplus I^{-}\right)
/I^{0}\twoheadrightarrow I^{+}/I^{0}$) with the snake map used in Tate's
construction, see Eq. \ref{la7}. We leave it to the reader to spell this out
in detail. In summary:\ Tate's residue can be read as a connecting
homomorphism in relative Lie homology.\medskip

In the one-dimensional theory we have the notion of a lattice as in Definition
\ref{def_onedim_lattice}, e.g. these are the%
\[
t^{i}k[[t]]\subset k((t))
\]
for any $i\in\mathbf{Z}$ $-$ here we temporarily allow ourselves to use
explicit coordinates for the sake of exposition. As we proceed to the
two-dimensional theory, the analogue of $k((t))$ will look like $k((s))((t))$
and we get a more complicated pattern of lattices: First of all, there are the
\textquotedblleft$t$-lattices\textquotedblright\ like $t^{i}k((s))[[t]]$ and
the quotient of any two such $t$-lattices, say of the pair%
\[
t^{i}k((s))[[t]]\subset t^{j}k((s))[[t]]\qquad\text{with}\qquad j\leq
i\text{,}%
\]
is a finite-dimensional $k((s))$-vector space; in this example it is the span%
\[
\simeq k((s))\left\langle t^{j},t^{j+1},\ldots,t^{i-1}\right\rangle \text{.}%
\]
In any such space we now get a notion of an \textquotedblleft$s$%
-lattice\textquotedblright, namely just in the previous sense, e.g. if $i=j+1
$ the quotient is just the span $\simeq k((s))\left\langle t^{j}\right\rangle
$ and the $s$-lattices would be of the shape $s^{i}k[[s]]\left\langle
t^{j}\right\rangle \subset k((s))\left\langle t^{j}\right\rangle $ for any
$i\in\mathbf{Z}$. Two things are important to keep in mind here:\newline
Firstly, for the sake of presentation we have described this in explicit
coordinates here. Of course we need to replace the vague notion of
\textquotedblleft$t$-lattices\textquotedblright\ and \textquotedblleft%
$s$-lattices\textquotedblright\ by something which makes no reference to
coordinates. See Definition \ref{def_HigherAdeleOperatorIdeals} for
Beilinson's beautiful solution.\newline Secondly, there is a true asymmetry
between $t$ and $s$. Note that for a field $k((s))((t))$ the roles of $s$ and
$t$ are not interchangeable, unlike for $k[[s]][[t]]$. For example,
$\sum_{i\geq0}s^{-i}t^{i}$ lies in this field, but $\sum_{i\geq0}t^{-i}s^{i}$
does not describe an actual element of $k((s))((t))$. This is why we chose to
speak of \textquotedblleft$s$-lattices\textquotedblright\ in a
\textit{quotient} of $t$-lattices, rather than trying to deal with something
like $s^{i}k[[s]]((t))$. Note for example that $\bigcup_{i\in\mathbf{Z}}%
s^{i}k[[s]]((t))\subsetneqq k((s))((t))$. To avoid all pitfalls, it would be
best to work in appropriate categories of ind-pro limits right from the start,
as in \cite{MR3510209}.\medskip

Based on having two lattice structures instead of just one, in dimension two
Beilinson deals with four ideals $I_{1}^{\pm},I_{2}^{\pm}$ instead of just a
single pair as in Tate's construction. We may read the exact sequence in Eq.
\ref{lwcl4} as a quasi-isomorphism%
\[
\left[  I^{0}\longrightarrow I^{+}\oplus I^{-}\right]  _{1,0}\overset{\sim
}{\longrightarrow}E
\]
with a two-term complex concentrated in homological degrees $[1,0]$. View
these ideals as representing the $t$-lattices of above (e.g. $I_{1}^{+}$ would
be endomorphisms whose image lies in some $t$-lattice). Then replicating the
analogous structure for $s$-lattices leads to the bicomplex%
\[%
\begin{bmatrix}
I_{1}^{0}\cap I_{2}^{0} & \longrightarrow & I_{1}^{0}\cap I_{2}^{+}\oplus
I_{1}^{0}\cap I_{2}^{-}\\
\downarrow &  & \downarrow\\
I_{1}^{+}\cap I_{2}^{0}\oplus I_{1}^{-}\cap I_{2}^{0} & \longrightarrow &
I_{1}^{+}\oplus I_{1}^{-}\oplus I_{2}^{+}\oplus I_{2}^{-}%
\end{bmatrix}
\overset{\sim}{\longrightarrow}E\text{.}%
\]
Accordingly, in the theory for $n$ dimensions one gets a structure of $n$
cascading notions of lattices, and correspondingly $2^{n}$ ideals $I_{i}^{\pm
}$. The above gets replaced by a quasi-isomorphism with an $n$-hypercube. It
is a matter of taste whether one prefers to work with multi-complexes or with
the ordinary total complex. We prefer the latter, giving a complex
concentrated in homological degrees $[n+1,0]$, see Eq. \ref{lwcl9} and Eq.
\ref{la16}.\medskip

In order to construct the residue map in dimension two, it seems natural to
perform the mechanism of dimension one twice, once for each layer of lattices.
Hence, one should study the connecting homomorphism analogous to the one in
Eq. \ref{lwcl5}. However, things get a bit more complicated, because if we try
to compose two such connecting homomorphisms, we find that the input of the
second step should be the relative Lie homology group which is the output of
the first step. This leads to \emph{bi-relative Lie homology}, defined just as
the kernel on the left-hand side in%
\[
0\rightarrow C_{i}^{\operatorname*{Lie}}(I_{1}^{+}\left.  \mathsf{rel}\right.
I_{1}^{0}\left.  \mathsf{rel}\right.  I_{2}^{0})\rightarrow C_{i}%
^{\operatorname*{Lie}}(I_{1}^{+}\left.  \mathsf{rel}\right.  I_{1}%
^{0})\rightarrow C_{i}^{\operatorname*{Lie}}(I_{1}^{+}/I_{2}^{0}\left.
\mathsf{rel}\right.  I_{1}^{0}/I_{2}^{0})\rightarrow0\text{.}%
\]
Here we allow ourselves to write $I_{1}^{+}/I_{2}^{0}$ as a shorthand for
$\frac{I_{1}^{+}}{I_{2}^{0}\cap I_{1}^{+}}$ to improve legibility. Now we are
able to compose the associated connecting homomorphism with the one of Eq.
\ref{lwcl5}, giving something like%
\[
H_{3}(I^{+}/I_{1}^{0}I_{2}^{0})\overset{d}{\longrightarrow}H_{2}(I^{+}%
/I_{2}^{0}\left.  \mathsf{rel}\right.  I_{1}^{0})\overset{d}{\longrightarrow
}H_{1}(I^{+}\left.  \mathsf{rel}\right.  I_{1}^{0}\left.  \mathsf{rel}\right.
I_{2}^{0})\text{.}%
\]
We should make the bi-relative Lie homology more explicit: Unwinding complexes
as in Eq. \ref{lwcl6}, we see that%
\[
0\rightarrow C_{i}^{\operatorname*{Lie}}(I_{1}^{+}\left.  \mathsf{rel}\right.
I_{1}^{0}\left.  \mathsf{rel}\right.  I_{2}^{0})\rightarrow I_{1}^{0}%
\wedge\bigwedge\nolimits^{i-1}I_{1}^{+}\rightarrow I_{1}^{0}/I_{2}^{0}%
\wedge\bigwedge\nolimits^{i-1}(I_{1}^{+}/I_{2}^{0})\rightarrow0
\]
and therefore%
\begin{equation}
C_{i}^{\operatorname*{Lie}}(I_{1}^{+}\left.  \mathsf{rel}\right.  I_{1}%
^{0}\left.  \mathsf{rel}\right.  I_{2}^{0})=\bigcap_{i=1,2}\left(  I_{i}%
^{0}\wedge\bigwedge\nolimits^{i-1}I_{1}^{+}\right)  \text{.}\label{lwcl7}%
\end{equation}
The reader will have no difficulty in checking that $i$-fold multi-relative
Lie homology can be defined accordingly, and leads to further intersections of
subcomplexes as in Eq. \ref{lwcl7}. This explains the underlying structure of
Beilinson's complex $\left.  ^{\wedge}T_{\bullet}^{p}\right.  $, see Eq.
\ref{lBT_DefComplexTWedge}. In fact, $\left.  ^{\wedge}T_{\bullet}^{p}\right.
$ is a tiny bit more complicated because it works with all $2^{n}$ ideals
$I_{i}^{\pm}$ and $E$ instead of quotienting out the $I^{-}$-ideals and
working with $I^{+}$ only, i.e. without the simplification coming from
switching from Eq. \ref{lwcl2} to Eq. \ref{lwcl3}.\medskip

Let us pause for a second. What happens if we ignore Remark
\ref{rmk_DontUseCoefficients} and phrase Tate's construction in terms of a
long exact sequence, this time with varying coefficients? The diagram
\ref{lwcl10}\ turns into%
\[
\xymatrix{ 0 \ar[r] & I^{0}\otimes E \ar[r] \ar[d]^{[-,-]} & I^{+}\otimes E \ar[r] \ar[d]^{[-,-]} & (I^{+}/I^{0})\otimes E \ar[r] \ar[d]^{[-,-]} & 0 \\ 0 \ar[r] & I^{0} \ar[r] & I^{+} \ar[r] & I^{+}/I^{0} \ar[r] & 0 }
\]
and Eq. \ref{lwcl5} gets replaced by%
\[
H_{1}(E,I^{+}/I^{0})\longrightarrow H_{0}(E,I^{0})\text{.}%
\]
Besides the index shift, this map also gives Tate's residue\footnote{I find it
noteworthy that essentially the same computation admits at least two (quite
different) homological interpretations.}. Hence, it is actually possible to
set up the entire theory using Lie homology with coefficients instead of
relative Lie homology. This is the path taken in the previous paper
\cite{MR3207578}; the corresponding variant of Beilinson's complex $\left.
^{\wedge}T_{\bullet}^{p}\right.  $ is called $\left.  ^{\otimes}T_{\bullet
}^{p}\right.  $ in \textit{loc. cit.} Both variants in general give different
maps (and begin and end in different homology groups), but still they are
largely compatible \cite[Lemma 23]{MR3207578} and both give the
multi-dimensional residue \cite[Thm. 4 and 5]{MR3207578}.

The coefficient variant is more manageable for explicit computations: The
problem with complexes like $I^{0}\wedge\bigwedge\nolimits^{i-1}I^{+}$ is that
it is difficult to write down explicit bases for these spaces because the only
natural candidate are pure tensors%
\[
f_{0}\otimes f_{1}\otimes\cdots\otimes f_{i-1}%
\]
with $f_{0},\ldots,f_{i-1}$ ascendingly taken from an ordered basis of $I^{+}
$ so that $f_{0}\in I^{0}$.\ Performing calculations, it quickly becomes very
tedious to maintain elements in this standard ordered shape.\medskip

In the next section \S \ref{section_HochschildUnitalIntro} we propose yet
another point of view. First of all, motivated by the strong relation between
the Hodge $n$-part of Hochschild homology and Lie homology, we replace Lie
homology by (the full)\ Hochschild homology. This poses no problem since all
the Lie algebras/ideals we have encountered above are actually coming from
associative algebras and ordinary ideals. For example, the sequence in Eq.
\ref{lwcl1} will be replaced by%
\[
\cdots\rightarrow HH_{i+1}(I^{+}/I^{0})\overset{d}{\rightarrow}HH_{i}%
(I^{+}\left.  \mathsf{rel}\right.  I^{0})\rightarrow HH_{i}(I^{+})\rightarrow
HH_{i}(I^{+}/I^{0})\overset{d}{\rightarrow}\cdots\text{.}%
\]
However, now a substantial simplification occurs: In certain circumstances
relative Hochschild homology agrees with absolute Hochschild homology, in the
sense that the natural morphism%
\[
HH_{i}(I^{0})\longrightarrow HH_{i}(I^{+}\left.  \mathsf{rel}\right.  I^{0})
\]
sometimes happens to be an isomorphism. This is known as \emph{excision}; it
is easily seen to be wrong for arbitrary ideals but it turns out that the
ideals $I_{i}^{0}$ have the necessary property. This spares us from having to
work with multi-relative homology at all. Instead, we can just compose the
corresponding $n$ connecting maps, one by one, and we will prove that this
again gives the same map, but now its construction necessitates much less
effort. We will also see that it is much easier to compute this map
explicitly, saving us from a lot of trouble we had to go through in
\cite{MR3207578}.

\section{Hochschild and cyclic picture\label{section_HochschildUnitalIntro}}

In this section we will formulate an analogue of Beilinson's construction in
the context of Hochschild (and later also cyclic) homology. We follow the
natural steps:

\begin{enumerate}
\item We replace Lie homology with Hochschild homology. This is harmless since
cubically decomposed algebras come with an associative product structure
anyway. There is a natural map%
\[
\varepsilon:H_{\bullet}(A_{Lie},M_{Lie})\longrightarrow H_{\bullet
}(A,M)\text{,}%
\]
ultimately explaining numerous similarities.

\item The Hochschild complex is modelled on chain groups $A\otimes
\cdots\otimes A$ instead of exterior powers. Thus, the only reasonable
replacement of the mixed exterior powers/relative homology groups%
\[
CE(\mathfrak{j})_{r}:=\mathfrak{j}\wedge%
{\textstyle\bigwedge\nolimits^{r-1}}
\mathfrak{g}%
\]
in the original construction are the groups $J\otimes A\otimes\cdots\otimes A
$ for $J$ an ideal. This is very convenient, as this just gives Hochschild
homology with coefficients $H_{r}(A,J)$. Alternatively, one could work with
relative\ Hochschild groups. We will return to a relative perspective in
\S \ref{marker_SectAlternativeApproach}.
\end{enumerate}

To set up notation, let us very briefly recall the necessary structures in
Hochschild homology. See \cite[Ch. I]{MR1217970} for a detailed treatment.
Suppose $A$ is an arbitrary (not necessarily unital) associative $k$-algebra.
Let $M$ be an $A$-bimodule over $k$, or equivalently a left-$A\otimes
_{k}A^{\operatorname*{op}}$-module. Define chain groups $C_{i}(A,M):=M\otimes
_{k}A^{\otimes i}$ and a differential $b:C_{i}(A,M)\rightarrow C_{i-1}(A,M)$,
given by%
\begin{align}
m\otimes a_{1}\otimes\cdots\otimes a_{i}  &  \mapsto ma_{1}\otimes
a_{2}\otimes\cdots\otimes a_{i}\nonumber\\
&  +%
{\textstyle\sum\nolimits_{j=1}^{i-1}}
\left(  -1\right)  ^{j}m\otimes a_{1}\otimes\cdots\otimes a_{j}a_{j+1}%
\otimes\cdots\otimes a_{i}\label{la27}\\
&  +\left(  -1\right)  ^{i}a_{i}m\otimes a_{1}\otimes\cdots\otimes
a_{i-1}\text{.}\nonumber
\end{align}
We call the homology of the complex $(C_{\bullet}(A,M),b)$ its
\emph{Hochschild homology}, denoted by $H_{i}(A,M)$. Write $A_{Lie}$ for the
Lie algebra associated to $A$ via $[x,y]:=x\cdot y-y\cdot x$. There is a
canonical morphism%
\begin{align}
\varepsilon:C_{i}^{\operatorname*{Lie}}(A_{Lie},M_{Lie})  &  \rightarrow
C_{i}(A,M)\label{lb1}\\
m\otimes a_{1}\wedge\cdots\wedge a_{i}  &  \mapsto m\otimes\sum_{\pi
\in\mathfrak{S}_{i}}\operatorname*{sgn}(\pi)a_{\pi^{-1}(1)}\otimes
\cdots\otimes a_{\pi^{-1}(i)}\text{,}\nonumber
\end{align}
where $\mathfrak{S}_{i}$ is the symmetric group on $i$ letters. This is a
morphism of complexes, in particular it induces a morphism $H_{i}%
(A_{Lie},M_{Lie})\rightarrow H_{i}(A,M)$.

For the rest of this section assume $A$ is \textit{unital}. Clearly $A$ is a
bimodule over itself and we write $HH_{i}(A):=H_{i}(A,A)$ as an abbreviation
(see \S \ref{marker_sect_ReminderHochschildNonUnital} for the correct
definition when $A$ is not unital). A $k$-algebra morphism $f:A\rightarrow
A^{\prime}$ induces a map $f_{\ast}:HH_{i}(A)\rightarrow HH_{i}(A^{\prime})$.
The motivation for using Hochschild homology in the context of residue theory
stems from the following famous isomorphism:

\begin{proposition}
(Hochschild-Kostant-Rosenberg) Suppose $A/k$ is a commutative smooth
$k$-algebra. Then the morphism%
\begin{align}
\Omega_{A/k}^{n} &  \longrightarrow HH_{n}(A)\nonumber\\
f_{0}\mathrm{d}f_{1}\wedge\cdots\wedge\mathrm{d}f_{n} &  \longmapsto\sum
_{\pi\in\mathfrak{S}_{n}}\operatorname*{sgn}(\pi)f_{0}\otimes f_{\pi^{-1}%
(1)}\otimes\cdots\otimes f_{\pi^{-1}(n)}\label{lb5}%
\end{align}
is an isomorphism of graded commutative algebras.
\end{proposition}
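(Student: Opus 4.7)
The plan is to exhibit an explicit chain-level inverse to the antisymmetrization map and to reduce the isomorphism statement to the case of polynomial algebras, where it falls out of a direct Koszul resolution computation.

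First I would verify well-definedness. Using the formula for $b$ and the commutativity of $A$, a direct calculation shows that $\varepsilon_n(f_0\mathrm{d}f_1 \wedge \cdots \wedge \mathrm{d}f_n)$ lies in $\ker b$, so the image is a Hochschild cycle. Next, the assignment descends from the free object $A \otimes \bigwedge^n A$ through the Leibniz rule $\mathrm{d}(fg) = f\mathrm{d}g + g\mathrm{d}f$: the discrepancy is explicitly a Hochschild boundary, a short bookkeeping exercise with the signs of permutations. In the opposite direction, crucially using $\operatorname{char} k = 0$, define
\[
\pi_n : C_n(A,A) \longrightarrow \Omega_{A/k}^n, \qquad a_0 \otimes a_1 \otimes \cdots \otimes a_n \longmapsto \tfrac{1}{n!}\, a_0 \, \mathrm{d}a_1 \wedge \cdots \wedge \mathrm{d}a_n,
\]
and check $\pi_{n-1}\circ b = 0$ (again using commutativity of $A$) so that $\pi_n$ descends to $HH_n(A)$. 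Then $\pi_n\circ \varepsilon_n = \operatorname{id}$ on $\Omega_{A/k}^n$ is immediate from the fact that $n!$ terms of the antisymmetrization yield the same differential form.

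The substance of the theorem is that $\varepsilon_n\circ \pi_n$ also induces the identity on $HH_n(A)$; this is where smoothness enters, and it is the main obstacle. The strategy is to reduce to the polynomial case. Both functors $A \mapsto \Omega_{A/k}^*$ and $A \mapsto HH_*(A) = \operatorname{Tor}_*^{A \otimes_k A}(A,A)$ satisfy flat base change (the latter via flat base change for $\operatorname{Tor}$), so both commute with Zariski localization. Since $A/k$ is smooth, a standard commutative-algebra argument gives, Zariski-locally, an étale map $k[x_1,\ldots,x_m] \to A$, and étale morphisms induce isomorphisms on each side. One is thus reduced to $A = k[x_1,\ldots,x_m]$, where the enveloping algebra $A^e = A\otimes_k A$ admits the Koszul resolution of $A$ along the regular sequence $x_i \otimes 1 - 1 \otimes x_i$; this at once yields $HH_n(A) \cong A \otimes_k \bigwedge^n k^m \cong \Omega_{A/k}^n$, and a comparison of resolutions identifies this isomorphism with $\varepsilon_n$. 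Finally, the graded-commutative-algebra structure comes from the shuffle product on the Hochschild complex: a combinatorial identity matching $(n,m)$-shuffles of antisymmetrized tensors with antisymmetrizations of wedge products shows that $\varepsilon_\bullet$ intertwines the wedge product with the shuffle product, completing the proof.
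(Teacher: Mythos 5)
Your argument is correct; it is essentially the standard Hochschild--Kostant--Rosenberg proof as it appears in the reference the paper cites (Loday, Thm.\ 3.4.4), and the paper itself offers no proof beyond that citation. The only point I would flag for care is the reduction step: the localization/\'etale base-change argument requires that $HH_*$ commutes with localization (flat base change over the enveloping algebra) and that for \'etale $A\to A'$ one has $HH_*(A')\cong HH_*(A)\otimes_A A'$ compatibly with $\Omega^*_{A'/k}\cong\Omega^*_{A/k}\otimes_A A'$, and both need justification (in particular that these identifications intertwine $\varepsilon$); you state the conclusion but should cite or supply these facts. Also, a minor normalization remark: your $\pi_n$ carries the factor $1/n!$, so $\pi_n\circ\varepsilon_n=\operatorname{id}$ on the nose; the more common convention omits the $1/n!$ and gets $\pi_n\circ\varepsilon_n=n!\cdot\operatorname{id}$, which is still an isomorphism in characteristic $0$ --- either works, but the factor is worth making explicit since it is exactly where the characteristic-zero hypothesis is used.
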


See \cite[Thm. 3.4.4]{MR1217970}. Let us now assume that $\mathbf{Q}\subseteq
k$: On $A^{\otimes(i+1)}$ recall that there is an action by Connes' cyclic
permutation operator%
\[
t:a_{0}\otimes a_{1}\otimes\cdots\otimes a_{i}\mapsto\left(  -1\right)
^{i}a_{i}\otimes a_{0}\otimes a_{1}\otimes\cdots\otimes a_{i-1}\text{.}%
\]
Define the cyclic chain groups by $CC_{i}(A):=A^{\otimes(i+1)}/(1-t)$; this is
the quotient by the action of $t$ on pure tensors. As was discovered by
Connes, it turns out that the differential $b$ remains well-defined on these
quotients. Its homology is known as \emph{cyclic homology} and denoted by
$HC_{i}(A)$. We shall also need \emph{Connes' periodicity sequence} \cite[Thm.
2.2.1]{MR1217970}: There is a long exact sequence
\begin{equation}
\cdots\longrightarrow HH_{i}(A)\overset{I}{\longrightarrow}HC_{i}%
(A)\overset{S}{\longrightarrow}HC_{i-2}(A)\overset{B}{\longrightarrow}%
HH_{i-1}(A)\longrightarrow\cdots\label{lml_35}%
\end{equation}
where $I$ is induced from the obvious inclusion/quotient map on the level of complexes.

\begin{remark}
At the expense of a more complicated definition of the cyclic chain groups,
all of these facts remain available without the simplifying assumption
$\mathbf{Q}\subset k$; see \cite[Thm. 2.1.5, we work with $H^{\lambda}$ of
\textit{loc. cit.}]{MR1217970}. We leave the necessary modifications to the reader.
\end{remark}

We shall moreover employ the map (recall that $\mathfrak{g}:=A_{Lie}$)%
\begin{align}
I^{\prime}:H_{n}(\mathfrak{g},\mathfrak{g})  &  \longrightarrow H_{n+1}%
(\mathfrak{g},k)\label{l_mapIprime}\\
f_{0}\otimes f_{1}\wedge\cdots\wedge f_{n}  &  \longmapsto(-1)^{n}\otimes
f_{0}\wedge\cdots\wedge f_{n}\nonumber
\end{align}
in Lie homology. The $(-1)^{n}$ is needed to make the differentials compatible.

\begin{proposition}
\label{prop_ConnesLodayQuillenHCVersionOfHKRIso}(Connes, Loday-Quillen)
Suppose $A/k$ is a commutative smooth $k$-algebra and $\operatorname*{char}%
k=0$. Then there is a canonical isomorphism%
\[
HC_{n}(A)\rightarrow\Omega_{A/k}^{n}/\mathrm{d}\Omega_{A/k}^{n-1}\oplus%
{\textstyle\bigoplus\nolimits_{i\geq1}}
H_{\operatorname*{dR}}^{n-2i}(A)
\]
so that $I:HH_{n}(A)\rightarrow HC_{n}(A)$ identifies with the quotient map
$\Omega_{A/k}^{n}\rightarrow\Omega_{A/k}^{n}/\mathrm{d}\Omega_{A/k}^{n-1}$ and
zero on the lower deRham summands.
\end{proposition}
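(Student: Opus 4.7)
The plan is to reduce the statement to a computation in the mixed complex model of cyclic homology and then invoke the Hochschild--Kostant--Rosenberg (HKR) isomorphism together with a well-known identification of Connes' operator with the de Rham differential.

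First, I would recall that cyclic homology of a unital $k$-algebra $A$ (with $\mathbf{Q}\subseteq k$) may be computed from the mixed complex $(C_\bullet(A), b, B)$, where $B$ is Connes' operator, by taking the total complex of the first-quadrant bicomplex
\[
\mathcal{B}_{p,q} = C_{q-p}(A), \qquad d_v = b, \quad d_h = B,
\]
so that $HC_n(A) = H_n(\operatorname{Tot}\mathcal{B})$ and the map $I:HH_n(A)\to HC_n(A)$ is induced by the inclusion of the first column. There is a filtration by columns whose associated spectral sequence has $E^1_{p,q} = HH_{q-p}(A)$ with $d^1$ induced by $B$. This is the standard setup (see \cite[\S 2.5]{MR1217970}).

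Second, I would apply HKR to identify $E^1_{p,q} = HH_{q-p}(A) \cong \Omega^{q-p}_{A/k}$ and, crucially, identify the induced $d^1$ with the de Rham differential (up to a sign/normalization). The fact that $B$ corresponds to $\mathrm{d}$ under HKR for smooth commutative $A/k$ in characteristic $0$ is classical and proved directly on cocycles by writing $B(f_0\otimes f_1\otimes\cdots\otimes f_n)$ and antisymmetrizing; the relevant computation is in \cite[\S 3.4, \S 2.3]{MR1217970}. After this identification the $E^2$-page of the spectral sequence becomes the de Rham cohomology of $A$ arranged along diagonals: in bidegree $(p,q)$ with $n = q-p$ one gets
\[
E^2_{p,q} \cong \begin{cases} \Omega^{n}_{A/k}/\mathrm{d}\Omega^{n-1}_{A/k} & p=0, \\ H^{n}_{\mathrm{dR}}(A) & p>0. \end{cases}
\]

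Third, I would argue that the spectral sequence degenerates at $E^2$. Under the assumption $\mathbf{Q}\subseteq k$ and smoothness, the Hodge decomposition of Hochschild and cyclic homology of Gerstenhaber--Schack (compatible with $I$, $S$, $B$) concentrates everything in a single Hodge weight and the higher differentials necessarily vanish; alternatively one may use Goodwillie/Loday's direct argument via the $\lambda$-decomposition. This gives a (canonically split by the Hodge weight) decomposition
\[
HC_n(A) \;\cong\; \Omega^n_{A/k}/\mathrm{d}\Omega^{n-1}_{A/k} \;\oplus\; \bigoplus_{i\geq 1} H^{n-2i}_{\mathrm{dR}}(A),
\]
and the map $I$, being induced by inclusion of the column $p=0$, lands in the top summand $\Omega^n_{A/k}/\mathrm{d}\Omega^{n-1}_{A/k}$ as the quotient map under HKR, and is zero on the remaining summands by filtration degree.

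The main obstacle is really the identification of Connes' $B$ with the de Rham $\mathrm{d}$ under HKR and the splitting of the spectral sequence; both are standard but genuinely require the smoothness of $A$ and $\operatorname{char}k=0$ (via the $\lambda$-decomposition). Since both appear in \cite{MR1217970}, I would quote them rather than reprove them, and use them precisely in steps two and three above.
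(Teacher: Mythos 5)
The paper gives no proof of this proposition at all; it simply cites \cite[Thm. 3.4.12 and remark]{MR1217970}, and your sketch is essentially the argument in that cited reference (column filtration of the $(b,B)$-bicomplex, HKR plus the identification of $B$ with $\mathrm{d}$, degeneration at $E^2$). Your argument is correct, but the justification of degeneration in step three is phrased a little loosely: it is not that everything is concentrated in a single Hodge weight of $HC_n(A)$ (the displayed direct sum shows precisely that several weights occur), but rather that the $\lambda$-decomposition splits the entire $(b,B)$-bicomplex into pieces indexed by $i+p$, where $i$ is the Hodge weight and $p$ the column (since $b$ preserves $i$ and $B$ raises $i$ by one while lowering $p$ by one), and in each such piece the smoothness hypothesis $HH_m(A)=HH_m^{(m)}(A)$ forces the $E^1$-page to be supported on a single row, so that all $d^r$ with $r\geq 2$ vanish and the column filtration is canonically split.
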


See \cite[Thm. 3.4.12 and remark]{MR1217970}. The direct summand decomposition
on the right-hand side can be identified with the Hodge decomposition of
cyclic homology due to Gerstenhaber and\ Schack \cite{MR917209}.

\subsection{Hochschild setup\label{subsect_HochschildConstruction}}

Let $A$ be a cubically decomposed algebra over $k$. We define $A$-bimodules
$N^{0}:=A$ and for $p\geq1$%
\begin{equation}
N^{p}:=\bigoplus\nolimits_{\substack{s_{1},\ldots,s_{n}\in\{+,-,0\}
\\\deg(s_{1},\ldots,s_{n})=p}}I_{1}^{s_{1}}\cap I_{2}^{s_{2}}\cap\cdots\cap
I_{n}^{s_{n}}\label{lBTA_12}%
\end{equation}
with degree\emph{\ }$\deg(s_{1},\ldots,s_{n}):=1+\#\{i\mid s_{i}=0\}$ as
before. Each $I_{i}^{\pm}$ is a two-sided ideal and thus an $A$-bimodule.

We shall denote the components $f=(f_{s_{1}\ldots s_{n}})$ of elements in
$N^{p}$ with indices in terms of $s_{1},\ldots,s_{n}\in\{+,-,0\}$. Clearly
$N^{p}=0$ for $p>n+1$. We get an exact sequence of $A$-bimodules%
\begin{equation}
0\longrightarrow N^{n+1}\overset{\partial}{\longrightarrow}N^{n}%
\overset{\partial}{\longrightarrow}\cdots\overset{\partial}{\longrightarrow
}N^{0}\longrightarrow0\label{la16}%
\end{equation}
by using the following differential%
\begin{align*}
\left(  \partial f\right)  _{s_{1}\ldots s_{n}}:=  &
{\textstyle\sum\limits_{\{i\mid s_{i}=+,-\}}}
\left(  -1\right)  ^{\#\left\{  j\mid j>i\text{ and }s_{j}=0\right\}
}f_{s_{1}\ldots0\ldots s_{n}}\text{ (for }N^{i}\rightarrow N^{i-1}\text{,
}i\geq2\text{)}\\
\partial f:=  &
{\textstyle\sum\limits_{s_{1}\ldots s_{n}\in\{+,-\}}}
\left(  -1\right)  ^{s_{1}+\cdots+s_{n}}f_{s_{1}\ldots s_{n}}\text{ (for
}N^{1}\rightarrow N^{0}\text{)}%
\end{align*}
It is straight-forward to check that $\partial^{2}=0$ holds, but more details
are found in \cite[\S 4]{MR3207578} nonetheless. As tensoring with $(-)\otimes
A^{\otimes(r-1)}$ is exact, we can functorially take the Hochschild complex
and obtain a bicomplex with exact rows, fairly similar to the bicomplex that
we have encountered before in Eq. \ref{la11},%
\begin{equation}%
\begin{array}
[c]{ccccccc}
&  &  &  & \rightarrow\cdots\rightarrow & C_{2}(A,N^{0}) & \rightarrow0\\
& \text{etc.} &  & \downarrow &  & \downarrow & \\
0\rightarrow & C_{1}(A,N^{n+1}) & \rightarrow & C_{1}(A,N^{n}) &
\rightarrow\cdots\rightarrow & C_{1}(A,N^{0}) & \rightarrow0\\
& \downarrow &  & \downarrow &  & \downarrow & \\
0\rightarrow & C_{0}(A,N^{n+1}) & \rightarrow & C_{0}(A,N^{n}) &
\rightarrow\cdots\rightarrow & C_{0}(A,N^{0}) & \rightarrow0
\end{array}
\label{la15}%
\end{equation}
As before its support is horizontally bounded in degrees $[n+1,0]$, vertically
$(+\infty,0]$; we get an analogous differential on the $E^{n+1}$-page, which
is an isomorphism. Proceeding as before, \textit{but this time considering
degree }$n$\textit{\ instead of }$n+1$, we obtain%
\begin{equation}
\phi_{HH}:HH_{n}(A)\overset{\sim}{\longrightarrow}H_{n}(A,N^{0})\overset
{\text{edge}}{\longrightarrow}E_{0,n}^{n+1}\underset{d^{-1}}{\overset{\sim
}{\longrightarrow}}E_{n+1,0}^{n+1}\overset{\text{edge}}{\longrightarrow}%
H_{0}(A,N^{n+1})\overset{\tau}{\longrightarrow}k\text{.}\label{la14}%
\end{equation}
The consideration with the trace $\tau$ of the cubically decomposed algebra is
exactly the same as before since%
\[
H_{0}(A,N^{n+1})=\frac{N^{n+1}}{[N^{n+1},A]}\text{,}%
\]
but $N^{n+1}=I_{1}^{s_{1}}\cap I_{2}^{s_{2}}\cap\cdots\cap I_{n}^{s_{n}%
}=I_{tr}$, so we obtain exactly the same object as in the Lie counterpart, see
Eq. \ref{la12}. In particular, the trace $\tau$ is applicable for the same
reasons as before. This leads to the following new construction:

\begin{proposition}
For every cubically decomposed algebra $(A,(I_{i}^{\pm}),\tau)$ over $k$,
there is a canonical morphism%
\[
\phi_{HH}:HH_{n}(A)\longrightarrow k\text{.}%
\]
It is functorial in morphisms of cubically decomposed algebras.
\end{proposition}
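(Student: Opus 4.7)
The plan is to imitate Beilinson's spectral sequence argument (Eq.~\ref{la29}) verbatim, with the Chevalley--Eilenberg complex replaced by the Hochschild complex with coefficients. The full formula is already written down in Eq.~\ref{la14}, so the content of the proposition is really the verification that every arrow in that string is well-defined and canonical, and that the whole thing is functorial.

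First I would establish the exactness of the sequence \ref{la16} of $A$-bimodules. Given the sign conventions defining $\partial$, the identity $\partial^{2}=0$ is routine combinatorics of a cubical object (compare \cite[\S 4]{MR3207578}); exactness at each slot then reduces, via a standard cube-filtration argument (direction by direction), to the assumption $I_i^{+}+I_i^{-}=A$ built into the definition of a cubically decomposed algebra. Since $k$ is a field, the functor $(-)\otimes_{k}A^{\otimes(r-1)}$ is exact for every fixed $r\geq 0$, so tensoring \ref{la16} produces the exact rows of the bicomplex \ref{la15}. The vertical differential is the Hochschild $b$ of Eq.~\ref{la27}, applied with the bimodule coefficients $N^{p}$; compatibility with the horizontal $\partial$ (which is $k$-linear in the coefficient slot) is automatic.

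Second I would run the two bicomplex spectral sequences of \ref{la15}. Its support is horizontally concentrated in $[0,n+1]$ and vertically in $[0,+\infty)$, hence both converge. The filtration whose $d^{0}$ is horizontal has $E^{1}=0$ by the row exactness of the previous step, so the common abutment is zero. The other filtration has $E^{1}_{p,q}=H_{q}(A,N^{p})$; by horizontal concentration the only differential that can annihilate $E^{n+1}_{n+1,0}$ and $E^{n+1}_{0,n}$ is $d\colon E^{n+1}_{n+1,0}\to E^{n+1}_{0,n}$, so convergence to zero forces this $d$ to be an isomorphism (this is the higher-dimensional avatar of the snake lemma, exactly as in Beilinson's setting). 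Composing with the edge maps yields Eq.~\ref{la14}. The identification $H_{n}(A,N^{0})\cong HH_{n}(A)$ is simply $N^{0}=A$, and at the other end
\[
H_{0}(A,N^{n+1}) \;=\; N^{n+1}/[A,N^{n+1}] \;=\; I_{tr}/[I_{tr},A]
\]
because $N^{n+1}=I_{1}^{0}\cap\cdots\cap I_{n}^{0}=I_{tr}$, matching the Lie situation of Eq.~\ref{la12}, so the trace $\tau$ from the data of the cubically decomposed algebra applies and produces the final map to $k$.

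For functoriality, a morphism $\varphi\colon (A,(I_{i}^{\pm}),\tau)\to (A',(I_{i}'^{\pm}),\tau')$ of cubically decomposed algebras by definition satisfies $\varphi(I_{i}^{\pm})\subseteq I_{i}'^{\pm}$ and intertwines the traces on $I_{tr}$. It therefore sends the bimodules $N^{p}$ to the $N'^{p}$ and induces a morphism of the entire bicomplex \ref{la15} compatible with both filtrations, hence a morphism of the associated spectral sequences commuting with the edge maps and with the isomorphism $d^{-1}$. Since $\tau$ is preserved by assumption, naturality of $\phi_{HH}$ follows. The main obstacle I anticipate is not conceptual but bookkeeping: checking that the signs chosen for $\partial$ make the squares of \ref{la15} genuinely (anti)commute with $b$, so that one really has a bicomplex rather than a pre-bicomplex with sign discrepancies — the only non-formal input beyond this is the covering property $I_{i}^{+}+I_{i}^{-}=A$, which is precisely what qualifies $(A,(I_{i}^{\pm}),\tau)$ as cubically decomposed.
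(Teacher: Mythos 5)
Your proposal is correct and reproduces the paper's own argument step for step: the exactness of the coefficient complex \ref{la16} from $I_i^{+}+I_i^{-}=A$, tensoring with $A^{\otimes(r-1)}$ to obtain the bicomplex \ref{la15}, the double spectral-sequence argument forcing $d\colon E^{n+1}_{n+1,0}\to E^{n+1}_{0,n}$ to be an isomorphism, and the identification $H_{0}(A,N^{n+1})=I_{tr}/[I_{tr},A]$ feeding into $\tau$. The only thing you spell out beyond the paper is the functoriality check, which the paper asserts but does not write out; your sketch of it is what one would give.
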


Let us explain how to obtain an explicit formula for the fairly abstract
construction of $\phi_{HH}$. To this end, we employ the following tool from
the theory of spectral sequences:

\begin{lemma}
[{\cite[Lemma 19]{MR3207578}}]Suppose we are given a bounded exact sequence%
\[
S^{\bullet}=[S^{n+1}\rightarrow S^{n}\rightarrow\cdots\rightarrow
S^{0}]_{n+1,0}%
\]
of bounded below complexes of $k$-vector spaces; or equivalently a
correspondingly bounded bicomplex.

\begin{enumerate}
\item There is a second quadrant homological spectral sequence $(E_{p,q}%
^{r},d_{r})$ converging to zero such that%
\[
E_{p,q}^{1}=H_{q}(S_{\bullet}^{p})\text{.\qquad\emph{(}}d_{r}:E_{p,q}%
^{r}\rightarrow E_{p-r,q+r-1}^{r}\text{\emph{)}}%
\]

\item The following differentials are isomorphisms:%
\[
d_{n+1}:E_{n+1,0}^{n+1}\rightarrow E_{0,n}^{n+1}\,\text{.}%
\]

\item If $H_{p}:S^{p}\rightarrow S^{p+1}$ is a contracting homotopy for
$S^{\bullet}$, then%
\begin{equation}
(d_{n+1})^{-1}=H_{n}\delta_{1}H_{n-1}\cdots\delta_{n-1}H_{1}\delta_{n}%
H_{0}=H_{n}%
{\textstyle\prod\nolimits_{i=1,\ldots,n}}
(\delta_{i}H_{n-i})\text{.}\label{la24}%
\end{equation}

\end{enumerate}
\end{lemma}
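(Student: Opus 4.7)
The plan is to deduce this lemma from standard bicomplex-spectral-sequence machinery, specialized to the particular bounded setting. First, I would view the data as a bicomplex $C_{p,q} := (S^{p})_{q}$ with horizontal differential $\partial$ from the exact sequence and vertical differential $\delta$ from the internal differential of each column, supported in $0 \le p \le n+1$ and $q \ge 0$. Both associated bicomplex spectral sequences converge to the homology of the total complex, and exactness of the rows will make the horizontal-first spectral sequence vanish already at $E^{1}$; hence the total complex is acyclic, and the vertical-first spectral sequence --- whose $E^{1}$-page is visibly $H_{q}(S^{p}_{\bullet})$ --- also converges to zero, proving (1).

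For (2), I will argue purely from support: a differential $d_{r}$ has bidegree $(-r, r-1)$, so for $r \ge n+2$ any $d_{r}$ incident to $(n+1, 0)$ or $(0, n)$ has source or target outside the horizontal strip $[0, n+1]$. This forces $E^{n+2}_{n+1,0} = E^{\infty}_{n+1,0} = 0$ and $E^{n+2}_{0,n} = 0$. On the $E^{n+1}$-page, the bidegree $(n+1, 0)$ only emits $d_{n+1}$ to $(0, n)$ (the potential incoming differential from $(2n+2, -n)$ vanishes by support), and symmetrically $(0, n)$ only receives $d_{n+1}$ from $(n+1, 0)$. Vanishing on the next page therefore forces $d_{n+1}$ to be both injective and surjective.

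For (3), the main construction, I plan to explicitly invert the zigzag that defines $d_{n+1}$. Recall that $d_{n+1}$ is computed by choosing a chain $x_{0} \in S^{n+1}_{0}$, $x_{1} \in S^{n}_{1}$, \ldots, $x_{n} \in S^{1}_{n}$ with $\delta x_{0} = 0$ and $\partial x_{k-1} = \delta x_{k}$, and sending $[x_{0}] \mapsto [\partial x_{n}] \in E^{n+1}_{0,n}$. To invert this, I start with a $\delta$-cycle $y \in S^{0}_{n}$ and build such a chain using $H$ by setting
\[
x_{n} := H_{0}(y), \qquad x_{n-k} := H_{k}\, \delta\, x_{n-k+1} \quad (1 \le k \le n),
\]
with $H_{-1} := 0$. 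Using $\partial H + H \partial = \mathrm{id}$, a short induction will show $\partial x_{n} = y$ and, for $k \ge 1$,
\[
\partial x_{n-k} = \delta x_{n-k+1} - H_{k-1} \delta \partial x_{n-k+1} = \delta x_{n-k+1},
\]
the cancellation using the inductive identity $\partial x_{n-k+1} = \delta x_{n-k+2}$ together with $\delta^{2} = 0$ (and $\delta y = 0$ in the base case). The terminal element $x_{0} = H_{n}\, \delta\, H_{n-1} \cdots \delta\, H_{1}\, \delta\, H_{0}(y)$ is automatically a $\delta$-cycle since the columns are bounded below, and running the zigzag forward on $x_{0}$ will recover $y$.

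The main obstacle will be bookkeeping: I need to fix sign conventions for the bicomplex spectral sequence consistently throughout, and to verify that the iteratively constructed $x_{n-k}$ yield well-defined classes on the $E^{n+1}$-page, independent of the permissible choices in a generic zigzag. The contracting homotopy provides a canonical choice; independence from the particular $H_{p}$ will then follow from a routine argument with second-order homotopies. Once these conventions are in place, the inductive verification above is essentially forced by the homotopy identity $\partial H + H \partial = \mathrm{id}$.
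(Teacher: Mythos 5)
The paper does not supply a proof of this lemma; it simply cites it from \cite[Lemma 19]{MR3207578}. That said, your argument is correct and is exactly the expected standard argument: the two bicomplex spectral sequences plus exact rows give convergence to zero for (1), the bidegree/support bookkeeping forces $d_{n+1}$ to be an isomorphism for (2), and the contracting homotopy inverts the defining zigzag for (3). Two small remarks. First, the final paragraph about \textquotedblleft independence from the particular $H_{p}$\textquotedblright\ is superfluous: once you have verified $d_{n+1}[x_{0}]=[y]$, uniqueness of the preimage class is automatic from part (2), since $d_{n+1}$ is an isomorphism; no second-order homotopy argument is required. Second, the cancellation $H_{k-1}\partial\delta x_{n-k+1}=0$ should be noted to hold under either the commuting or anticommuting bicomplex sign convention (since in both cases $\partial\delta x_{n-k+1}=\pm\delta\partial x_{n-k+1}=\pm\delta^{2}x_{n-k+2}=0$), so the sign bookkeeping you flag as the \textquotedblleft main obstacle\textquotedblright\ is in fact harmless at every step you actually use it.
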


This result can be applied to the bicomplex of Eq. \ref{la15}. The required
contracting homotopy can be constructed from a suitable family of commuting
idempotents in the cubically decomposed algebra as in Definition
\ref{marker_DefSysOfGoodIdempotents}:

\begin{definition}
[{\cite[Def. 14]{MR3207578}}]\label{marker_DefSysOfGoodIdempotents}Suppose $A$
is an $n$-fold unital cubically decomposed algebra. A \emph{system of good
idempotents} are pairwise commuting elements $P_{i}^{+}\in A$ (with
$i=1,\ldots,n$) such that the following conditions are met:

\begin{itemize}
\item $P_{i}^{+2}=P_{i}^{+}$.

\item $P_{i}^{+}A\subseteq I_{i}^{+}$.

\item $P_{i}^{-}A\subseteq I_{i}^{-}\qquad$(and we define $P_{i}%
^{-}:=\mathbf{1}_{A}-P_{i}^{+}$).
\end{itemize}
\end{definition}

The elements $P_{i}^{-}$ then are pairwise commuting idempotents as well. We
can use the contracting homotopy developed in an earlier paper:

\begin{lemma}
[{\cite[Lemma 16]{MR3207578}}]Let $A$ be unital and $\{P_{i}^{+}\}$ a system
of good idempotents. An explicit contracting homotopy $H:N^{i}\rightarrow
N^{i+1}$ for the complex $N^{\bullet}$ of Eq. \ref{la16} is given by%
\begin{align}
&  (Hf)_{s_{1}\ldots s_{n}}=\left(  -1\right)  ^{\deg(s_{1}\ldots s_{n}%
)}\left(  -1\right)  ^{s_{1}+\cdots+s_{b}}P_{1}^{s_{1}}\cdots P_{b}^{s_{b}%
}\label{la28}\\
&
{\textstyle\sum\limits_{\gamma_{1}\ldots\gamma_{b+1}\in\{\pm\}}}
\left(  -1\right)  ^{\gamma_{1}+\cdots+\gamma_{b}}P_{b+1}^{-\gamma_{b+1}%
}f_{\gamma_{1}\ldots\gamma_{b+1}s_{b+2}\ldots s_{n}}\nonumber\\
\text{(for }N^{i}  &  \rightarrow N^{i+1}\text{ with }i\geq1\text{)}%
\nonumber\\
&  (Hf)_{s_{1}\ldots s_{n}}=(-1)^{s_{1}+\cdots+s_{n}}P_{1}^{s_{1}}\cdots
P_{n}^{s_{n}}f\text{\quad(for }N^{0}\rightarrow N^{1}\text{)}\label{la26}%
\end{align}
where $b$ is the largest index such that $s_{1},\ldots,s_{b}\in\{\pm\}$ or
$b=0$ if none.
\end{lemma}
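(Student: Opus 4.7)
The plan is to verify directly the two defining properties of a contracting homotopy for the complex in Eq. \ref{la16}: first, that each component of $Hf$ prescribed by Eqs. \ref{la28} and \ref{la26} lies in the correct intersection of ideals, so that $H$ is well-defined as a map $N^i \to N^{i+1}$; and second, that $\partial H + H\partial = \mathrm{id}_{N^i}$ holds on each $N^i$.

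Well-definedness follows immediately from the properties of a system of good idempotents. In the summand producing $(Hf)_{s_1\ldots s_n}$ in Eq. \ref{la28}, commutativity of the $P_j^\pm$ together with $P_j^\pm A \subseteq I_j^\pm$ and the two-sidedness of each $I_j^\pm$ place the product $P_1^{s_1}\cdots P_b^{s_b}\,P_{b+1}^{-\gamma_{b+1}}\,f_{\gamma_1\ldots\gamma_{b+1} s_{b+2}\ldots s_n}$ in $I_j^{s_j}$ for every $j \le b$. For $j = b+1$ the maximality of $b$ forces $s_{b+1} = 0$; the factor $P_{b+1}^{-\gamma_{b+1}}$ supplies $I_{b+1}^{-\gamma_{b+1}}$, while $f$ already lies in $I_{b+1}^{\gamma_{b+1}}$, so the product falls into $I_{b+1}^+ \cap I_{b+1}^- = I_{b+1}^0$. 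For $j \ge b+2$ the entry $f$ already lies in $I_j^{s_j}$. The simpler Eq. \ref{la26} is handled the same way, and for it the entire image lies in $N^1$ since every $s_j \in \{\pm\}$.

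The homotopy identity is best understood structurally. The complex $N^\bullet$ is an $n$-cube built from the three-term exact sequences $0 \to I_j^0 \to I_j^+ \oplus I_j^- \to A \to 0$, one for each $j$. Each of these short complexes admits an elementary contracting homotopy $A \ni f \mapsto (P_j^+ f, P_j^- f)$ and $(g^+, g^-) \mapsto P_j^- g^+ - P_j^+ g^-$, whose verification is a one-line calculation using $P_j^+ + P_j^- = \mathbf{1}$ and the fact that $P_j^\pm g^\mp \in I_j^0$. The formula in Eq. \ref{la28} is then the Koszul-style prescription that contracts the total complex by applying the elementary homotopy at the first coordinate still in state $0$ (namely position $b+1$), composed with idempotent projections at the later coordinates; the signs $(-1)^{\deg(s)}$, $(-1)^{s_1+\cdots+s_b}$ and $(-1)^{\gamma_1+\cdots+\gamma_b}$ are precisely those forced by the Koszul sign rule for the total-complex contraction.

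The main obstacle is the sign bookkeeping, because three sign conventions interact: the cube-differential signs $(-1)^{\#\{j > i:\, s_j=0\}}$ in $\partial$, the Koszul signs $(-1)^{\deg(s)}$ and $(-1)^{s_1+\cdots+s_b}$ in $H$, and the internal summation signs $(-1)^{\gamma_1+\cdots+\gamma_b}$. I would manage it by expanding $(\partial H f)_{s_1\ldots s_n}$ and $(H\partial f)_{s_1\ldots s_n}$ component by component, splitting contributions according to which coordinate is being changed from $0$ to $\pm$ and vice versa; the identity $P_j^+ + P_j^- = \mathbf{1}$ collapses the $\gamma$-sums, and the remaining terms cancel pairwise so that only $f_{s_1\ldots s_n}$ survives. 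As a sanity check I would first treat $n=1$, which recovers Tate's splitting in Eq. \ref{la22} via the $P^\pm$, and then the case $f \in N^0$ (where Eq. \ref{la26} applies and only $\partial H f = f$ need be checked); the general case then admits an induction on $n - b$ that exploits the recursive character of the formula, in which the first $0$-entry acts as the pivot.
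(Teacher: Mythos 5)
The paper does not actually prove this lemma; it cites it verbatim as {\cite[Lemma 16]{MR3207578}}, so there is no in-paper proof to compare against. Your plan (check that each component of $Hf$ lands in the right intersection of ideals, then verify $\partial H + H\partial = \mathrm{id}$ by a direct component-wise expansion) is the natural approach, and your well-definedness argument is complete and correct: the $P_j^{s_j}$ factors and two-sidedness of the ideals handle $j \le b$, the two factors $P_{b+1}^{-\gamma_{b+1}}$ and $f \in I_{b+1}^{\gamma_{b+1}}$ put position $b+1$ in $I_{b+1}^0$, and the unchanged indices handle $j \ge b+2$.

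However, the second half is only a sketch, and the ``one-line'' elementary homotopy you write down already has a sign problem that your own $n=1$ sanity check would catch. With the paper's conventions, the edge complex at coordinate $j$ is $I_j^0 \xrightarrow{x\mapsto(x,x)} I_j^+\oplus I_j^- \xrightarrow{(g^+,g^-)\mapsto g^+-g^-} A$ (the $(-1)^s$ in $\partial|_{N^1\to N^0}$), and the correct splitting is $h_0(f)=(P_j^+f,\,-P_j^-f)$ together with $h_1(g^+,g^-)=P_j^-g^+ + P_j^+g^-$; your $h_0(f)=(P_j^+f,P_j^-f)$ and $h_1(g^+,g^-)=P_j^-g^+ - P_j^+g^-$ do not split that complex (in fact $(P_j^+f,P_j^-f)$ is not even a section of $g^+-g^-$), and dropping the $(-1)^{s_1+\cdots+s_n}$ factor that is visibly present in Eq.~\ref{la26} is exactly where the discrepancy comes from. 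The sign $(-1)^{\deg(s_1\ldots s_n)}$ and $(-1)^{s_1+\cdots+s_b}$ in Eq.~\ref{la28} are therefore not just Koszul decoration for a tensor-totalization: they compensate for both the $\pm$-alternation in $\partial|_{N^1\to N^0}$ and the $(-1)^{\#\{j>i:\,s_j=0\}}$ signs in the higher $\partial$'s, and they must be carried carefully when you expand $(\partial H + H\partial)f$. One more caveat: the ``$n$-cube tensor product'' heuristic is only a heuristic, since $N^\bullet$ is built from \emph{intersections} $I_1^{s_1}\cap\cdots\cap I_n^{s_n}$ rather than external products; what makes the Koszul-style contraction go through is precisely that the commuting idempotents $P_j^\pm$ give honest internal splittings compatible with all these intersections. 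The cancellation ``remaining terms cancel pairwise'' is plausible and is indeed the crux, but as written it is asserted, not proved. The proposal is therefore the right strategy with a correct well-definedness step, but it is not yet a proof: the homotopy identity is deferred and the intermediate ``elementary'' formulas must be corrected before the $n=1$ and $N^0\to N^1$ base cases even close.
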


By tensoring $(-)\otimes A^{\otimes(r-1)}$ this induces a contracting homotopy
for the rows in the bicomplex of Eq. \ref{la15}. The evaluation of the formula
in Eq. \ref{la24} corresponds to following a zig-zag in the bicomplex which
can be depicted graphically as:%
\begin{equation}%
\begin{tabular}
[c]{ccccccc|c}
&  &  &  &  &  & $0$ & \\
&  &  &  &  &  & $\mid$ & \\
&  &  &  & $\theta_{1,n}$ & $\overset{H}{\longleftarrow}$ & $\theta_{0,n}$ &
$n$\\
&  &  &  & $\vdots$ &  &  & $\vdots$\\
&  & $\theta_{n,1}$ & $\overset{H}{\longleftarrow}$ & $\theta_{n-1,1}$ &  &  &
$1$\\
&  & $\downarrow$ &  &  &  &  & \\
$\theta_{n+1,0}$ & $\overset{H}{\longleftarrow}$ & $\theta_{n,0}$ &  &  &  &
& $0$\\\hline
$n+1$ &  & $n$ &  & $n-1$ & $\cdots$ & $0$ &
\end{tabular}
\label{la25}%
\end{equation}
If $\theta_{0,n}=f_{0}\otimes\cdots\otimes f_{n}$ represents an element in
$E_{0,n}^{n+1}$ arising from the first part of the definition of $\phi_{HH}$
(cf. Eq. \ref{la14})%
\[
HH_{n}(A)\overset{\sim}{\longrightarrow}H_{n}(A,N^{0})\overset{\text{edge}%
}{\longrightarrow}E_{0,n}^{n+1}\ni\theta_{0,n}\text{,}%
\]
we can compute $d^{-1}:E_{0,n}^{n+1}\overset{\sim}{\longrightarrow}%
E_{n+1,0}^{n+1}$ by Eq. \ref{la24}. We claim:

\begin{lemma}
Let $A$ be unital and $\{P_{i}^{+}\}$ a system of good idempotents. Starting
with $\theta_{0,n}=f_{0}\otimes\cdots\otimes f_{n}$, we get for $s_{1}%
,\ldots,s_{n-p}\in\{+,-\}$ the formula%
\begin{align*}
&  \theta_{p+1,n-p\mid s_{1}\ldots s_{n-p}0\ldots0}=(-1)^{n+(n-1)+\ldots
+(n-p+1)}\\
&  \qquad\cdots(-1)^{2+3+\cdots+(p+1)}\left(  -1\right)  ^{s_{1}%
+\cdots+s_{n-p}}P_{1}^{s_{1}}\cdots P_{n-p}^{s_{n-p}}\\
&  \qquad\cdots\left(
{\textstyle\sum\limits_{\gamma_{n-p+1}\in\{\pm\}}}
\left(  -1\right)  ^{\gamma_{n-p+1}}P_{n-p+1}^{-\gamma_{n-p+1}}f_{n-p+1}%
P_{n-p+1}^{\gamma_{n-p+1}}\right)  \cdot(\cdots)\cdot\\
&  \qquad\cdots\left(
{\textstyle\sum\limits_{\gamma_{n}\in\{\pm\}}}
(-1)^{\gamma_{n}}P_{n}^{-\gamma_{n}}f_{n}P_{n}^{\gamma_{n}}\right)
f_{0}\otimes f_{1}\otimes\cdots\otimes f_{n-p}%
\end{align*}
\newline for the terms in Fig. \ref{la25}.
\end{lemma}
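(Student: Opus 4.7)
The plan is to prove the formula by induction on $p$, following the zig-zag of Fig.~\ref{la25}: at each step one applies the Hochschild differential $b$ (a vertical arrow in the bicomplex of Eq.~\ref{la15}) and then the contracting homotopy $H$ (a horizontal arrow, given by Eq.~\ref{la28}). For the base case $p=0$ one has $\theta_{1,n}=H(\theta_{0,n})$, and since $\theta_{0,n}=f_{0}\otimes\cdots\otimes f_{n}\in C_{n}(A,N^{0})$ only the coefficient $f_{0}$ is touched by $H$; Eq.~\ref{la26} then gives
\[
\theta_{1,n\mid s_{1}\ldots s_{n}}=(-1)^{s_{1}+\cdots+s_{n}}P_{1}^{s_{1}}\cdots P_{n}^{s_{n}}f_{0}\otimes f_{1}\otimes\cdots\otimes f_{n},
\]
matching the claim (the commutator product and the explicit sign factor both being empty).

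For the inductive step I abbreviate $\Phi_{k}:=\sum_{\gamma_{k}\in\{\pm\}}(-1)^{\gamma_{k}}P_{k}^{-\gamma_{k}}f_{k}P_{k}^{\gamma_{k}}$ and assume the displayed form for $\theta_{p+1,n-p}$, with cumulative sign $\epsilon_{p}=(-1)^{pn}$. Applying $b$ to this expression produces $n-p+1$ Hochschild summands; all of them carry the $P$-chain $P_{1}^{\gamma_{1}}\cdots P_{n-p}^{\gamma_{n-p}}$ as the leftmost part of the coefficient, except the $j=n-p$ term, in which the outer factor $f_{n-p}$ is multiplied on the left. Next, $H$ is applied at $b_{H}=n-p-1$ (the largest index with $s_{1},\ldots,s_{b_{H}}\in\{\pm\}$): this multiplies by $P_{n-p}^{-\gamma_{n-p}}$ on the left and introduces the sign $(-1)^{\gamma_{1}+\cdots+\gamma_{n-p-1}}$, which combines with the inductive $(-1)^{\gamma_{1}+\cdots+\gamma_{n-p}}$ to leave only $(-1)^{\gamma_{n-p}}$ inside the sum.

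The key simplification is then twofold. For the $j<n-p$ summands the factor $P_{n-p}^{-\gamma_{n-p}}$ ends up adjacent to $P_{n-p}^{\gamma_{n-p}}$ after exploiting the pairwise commutativity of the good idempotents, and hence vanishes by $P_{n-p}^{-\gamma_{n-p}}P_{n-p}^{\gamma_{n-p}}=0$. The surviving $j=n-p$ summand has $f_{n-p}$ separating the two idempotents, and after moving $P_{n-p}^{\gamma_{n-p}}$ leftward past the commuting $P_{i}^{\gamma_{i}}$, the sum over $\gamma_{n-p}$ yields precisely $\Phi_{n-p}$. Crucially, because the $(-1)^{\gamma_{1}+\cdots+\gamma_{n-p-1}}$-sign has already been consumed above, the residual sum $\sum_{\gamma_{1},\ldots,\gamma_{n-p-1}}P_{1}^{\gamma_{1}}\cdots P_{n-p-1}^{\gamma_{n-p-1}}$ is sign-free and collapses via $P_{i}^{+}+P_{i}^{-}=\mathbf{1}_{A}$ to the identity. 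Combining with the outer $H$-factor $(-1)^{\deg}(-1)^{s_{1}+\cdots+s_{n-p-1}}P_{1}^{s_{1}}\cdots P_{n-p-1}^{s_{n-p-1}}$ (with $\deg=p+2$) and the $(-1)^{n-p}$ from the surviving $b$-term yields $\epsilon_{p+1}=(-1)^{p+2+n-p}\epsilon_{p}=(-1)^{n}\epsilon_{p}$; this agrees with $(-1)^{(p+1)n}$, and an elementary computation shows that the explicit sign displayed in the lemma equals $(-1)^{\sum_{k=1}^{p}(n-k+1)+\sum_{k=1}^{p}(k+1)}=(-1)^{p(n+2)}=(-1)^{pn}$, closing the induction.

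The main obstacle is purely bookkeeping: one has to track three independent sources of signs (from $b$, from $H$, and from the inductive prefactor) and verify that the $(-1)^{\gamma_{1}+\cdots+\gamma_{n-p-1}}$-factor is consumed exactly as needed, so that the residual $\gamma_{i}$-sums collapse via $\mathbf{1}_{A}=P_{i}^{+}+P_{i}^{-}$ rather than producing parasitic $P_{i}^{+}-P_{i}^{-}$ factors. It is this precise balance that makes the clean commutator form $\Phi_{k}$ propagate through the induction without spurious outer $P$-factors appearing between the $P_{i}^{s_{i}}$ and the commutator chain.
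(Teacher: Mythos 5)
Your proposal is correct and follows essentially the same inductive argument as the paper: base case via Eq.~\ref{la26}; in the inductive step the three types of Hochschild $b$-summands are analysed, the ``inner'' ones dying by $P_{n-p}^{-\gamma_{n-p}}P_{n-p}^{\gamma_{n-p}}=0$, the last one surviving to produce the new commutator factor, with the residual unsigned $\gamma$-sums collapsing via $P_i^{+}+P_i^{-}=\mathbf{1}$, and the cumulative sign $(-1)^{pn}$ matching the displayed $(-1)^{p(n+2)}$. The paper writes the auxiliary operator $M$ out explicitly and denotes the three summand types $A$, $B$, $C$, whereas you use the abbreviation $\Phi_k$ and a sign bookkeeping quantity $\epsilon_p$; the substance is identical.
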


This is the Hochschild counterpart of \cite[Prop. 24]{MR3207578}. The proof
will be very similar to the one given for the Lie homology counterpart in
\cite{MR3207578}, but actually quite a bit \textit{less} involved.

\begin{proof}
We prove this by induction on $p$, starting from $p=0$. In this case, the
claim reads%
\[
\theta_{1,n\mid s_{1}\ldots s_{n}}=\left(  -1\right)  ^{s_{1}+\cdots+s_{n}%
}P_{1}^{s_{1}}\cdots P_{n}^{s_{n}}f_{0}\otimes f_{1}\otimes\cdots\otimes
f_{n}\text{,}%
\]
which is clearly true in view of Eq. \ref{la26}. Next, assume the claim is
known for a given $p$ and we want to treat the case $p+1$, i.e. we need to
evaluate a Hochschild differential $b$ and pick a preimage as in the step%
\[%
\begin{tabular}
[c]{ccc}
&  & $\theta_{p+1,n-p}$\\
&  & $\downarrow b$\\
$\theta_{p+2,n-p-1}$ & $\overset{H}{\longleftarrow}$ & $\theta_{p+1,n-p-1} $%
\end{tabular}
\]
of Fig. \ref{la25}. According to our induction hypothesis, we get
$\theta_{p+1,n-p\mid s_{1}\ldots s_{n-p}0\ldots0}=Mf_{0}\otimes f_{1}%
\otimes\cdots\otimes f_{n-p}$ with the auxiliary expression%
\begin{align*}
M  &  =(-1)^{n+(n-1)+\ldots+(n-p+1)}(-1)^{2+3+\cdots+(p+1)}\left(  -1\right)
^{s_{1}+\cdots+s_{n-p}}P_{1}^{s_{1}}\cdots P_{n-p}^{s_{n-p}}\\
&  \qquad\cdots\left(
{\textstyle\sum\limits_{\gamma_{n-p+1}\in\{\pm\}}}
\left(  -1\right)  ^{\gamma_{n-p+1}}P_{n-p+1}^{-\gamma_{n-p+1}}f_{n-p+1}%
P_{n-p+1}^{\gamma_{n-p+1}}\right)  \cdot(\cdots)\cdot\\
&  \qquad\cdots\left(
{\textstyle\sum\limits_{\gamma_{n}\in\{\pm\}}}
(-1)^{\gamma_{n}}P_{n}^{-\gamma_{n}}f_{n}P_{n}^{\gamma_{n}}\right)  \text{.}%
\end{align*}
The Hochschild differential $b$ naturally decomposes into three parts (cf. Eq.
\ref{la27})%
\begin{align*}
\theta_{p+1,n-p-1}^{(A)}  &  =Mf_{0}f_{1}\otimes f_{2}\otimes\cdots\otimes
f_{n-p}\text{,}\\
\theta_{p+1,n-p-1}^{(B)}  &  =%
{\textstyle\sum\nolimits_{j=1}^{n-p-1}}
\left(  -1\right)  ^{j}Mf_{0}\otimes f_{1}\otimes\cdots\otimes f_{j}%
f_{j+1}\otimes\cdots\otimes f_{n-p}\text{,}\\
\theta_{p+1,n-p-1}^{(C)}  &  =(-1)^{n-p}f_{n-p}Mf_{0}\otimes f_{1}%
\otimes\cdots\otimes f_{n-p-1}%
\end{align*}
(here we have suppressed the subscript $(-)_{s_{1}\ldots s_{n-p}0\ldots0}$ for
the sake of readability). Next, we need to evaluate $\theta_{p+2,n-p-1}%
^{(-)}:=H\theta_{p+1,n-p-1}^{(-)}$ for the cases $A,B,C$. Let us consider case
$C$: In this case, we just use Eq. \ref{la28} and plugging in $M$,%
\begin{align*}
\theta_{p+2,n-p-1\mid s_{1}\ldots s_{n-p-1}0\ldots0}^{(C)}  &  =(-1)^{n-p}%
\left(  -1\right)  ^{\deg(s_{1}\ldots s_{n-p-1}0\ldots0)}\\
&  \left(  -1\right)  ^{s_{1}+\cdots+s_{n-p-1}}P_{1}^{s_{1}}\cdots
P_{n-p-1}^{s_{n-p-1}}\\
&
{\textstyle\sum\limits_{\gamma_{1}\ldots\gamma_{n-p}\in\{\pm\}}}
\left(  -1\right)  ^{\gamma_{1}+\cdots+\gamma_{n-p-1}}P_{n-p}^{-\gamma_{n-p}%
}f_{n-p}\\
&  (-1)^{n+(n-1)+\ldots+(n-p+1)}(-1)^{2+3+\cdots+(p+1)}\left(  -1\right)
^{\gamma_{1}+\cdots+\gamma_{n-p}}P_{1}^{\gamma_{1}}\cdots P_{n-p}%
^{\gamma_{n-p}}\\
&  \qquad\cdots\left(
{\textstyle\sum\limits_{\gamma_{n-p+1}\in\{\pm\}}}
\left(  -1\right)  ^{\gamma_{n-p+1}}P_{n-p+1}^{-\gamma_{n-p+1}}f_{n-p+1}%
P_{n-p+1}^{\gamma_{n-p+1}}\right)  \cdot(\cdots)\cdot\\
&  \cdots\left(
{\textstyle\sum\limits_{\gamma_{n}\in\{\pm\}}}
(-1)^{\gamma_{n}}P_{n}^{-\gamma_{n}}f_{n}P_{n}^{\gamma_{n}}\right)
f_{0}\otimes f_{1}\otimes\cdots\otimes f_{n-p-1}%
\end{align*}
This fairly complicated expression unwinds into something much simpler by
several observations: (1) There is a large cancellation in the sign terms
$(-1)^{(\ldots)}$, (2) we have $\deg(s_{1}\ldots s_{n-p-1}0\ldots0)=p+2$, (3)
the pairwise commutativity of the idempotents allows us to reorder terms so
that we obtain the expression $%
{\textstyle\sum\nolimits_{\gamma_{1}\ldots\gamma_{n-p-1}\in\{\pm\}}}
P_{1}^{\gamma_{1}}\cdots P_{n-p-1}^{\gamma_{n-p-1}}$, but this is just the
identity operator by using the fact $P_{i}^{+}+P_{i}^{-}=\mathbf{1}$. Finally,
we arrive at%
\begin{align*}
\theta_{p+2,n-p-1\mid s_{1}\ldots s_{n-p-1}0\ldots0}^{(C)}  &
=(-1)^{n+(n-1)+\ldots+(n-p+1)+(n-p)}(-1)^{2+3+\cdots+(p+2)}\\
&  \left(  -1\right)  ^{s_{1}+\cdots+s_{n-p-1}}P_{1}^{s_{1}}\cdots
P_{n-p-1}^{s_{n-p-1}}\\
&  \left(
{\textstyle\sum\limits_{\gamma_{n-p}\in\{\pm\}}}
\left(  -1\right)  ^{\gamma_{n-p}}P_{n-p}^{-\gamma_{n-p}}f_{n-p}%
P_{n-p}^{\gamma_{n-p}}\right)  \cdots\\
&  \cdots\left(
{\textstyle\sum\limits_{\gamma_{n}\in\{\pm\}}}
(-1)^{\gamma_{n}}P_{n}^{-\gamma_{n}}f_{n}P_{n}^{\gamma_{n}}\right)
f_{0}\otimes f_{1}\otimes\cdots\otimes f_{n-p-1}\text{.}%
\end{align*}
In a similar fashion we can deal with the cases $A,B$, however in both these
cases we obtain a term $P_{i}^{\gamma_{i}}P_{i}^{-\gamma_{i}}=P_{i}%
^{\gamma_{i}}(\mathbf{1}-P_{i}^{\gamma_{i}})=0$, so that these terms vanish.
We leave the details to the reader (a similar cancellation occurs in the proof
of \cite[Prop. 24]{MR3207578}, the cancellation is explained by the very
beautiful identity\footnote{pointed out to me by the anonymous referee of
\cite{MR3207578}} $H^{2}=0$, which holds for this particular contracting
homotopy). Hence, $\theta_{p+2,n-p-1}=\theta_{p+2,n-p-1}^{(C)}$, giving the claim.
\end{proof}

\begin{theorem}
\label{thm_FormulaForHochschildResidueSymbol}Let $(A,(I_{i}^{\pm}),\tau)$ be a
unital cubically decomposed algebra over $k$ and $\{P_{i}^{+}\}$ a system of
good idempotents. Then the explicit formula%
\begin{align*}
\phi_{HH}(f_{0}\otimes\cdots\otimes f_{n})  &  =(-1)^{n}\tau\left(
{\textstyle\sum\limits_{\gamma_{1}\in\{\pm\}}}
\left(  -1\right)  ^{\gamma_{1}}P_{1}^{-\gamma_{1}}f_{1}P_{1}^{\gamma_{1}%
}\right)  \cdots\\
&  \qquad\qquad\cdots\left(
{\textstyle\sum\limits_{\gamma_{n}\in\{\pm\}}}
(-1)^{\gamma_{n}}P_{n}^{-\gamma_{n}}f_{n}P_{n}^{\gamma_{n}}\right)  f_{0}%
\end{align*}
holds.
\end{theorem}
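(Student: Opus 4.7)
The plan is to specialize the preceding lemma at the endpoint $p = n$ of the zigzag in Fig.~\ref{la25} and then apply $\tau$ via the final edge map.

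First I would unpack the formula of the lemma at $p = n$. The index range $s_1, \ldots, s_{n-p}$ becomes empty, so the prefactor $(-1)^{s_1 + \cdots + s_{n-p}} P_1^{s_1} \cdots P_{n-p}^{s_{n-p}}$ disappears, and the trailing tensor collapses to $f_0$ (no $f_1, \ldots, f_{n-p}$ remain to its right). Only the two sign exponents $n + (n-1) + \cdots + (n-p+1)$ and $2 + 3 + \cdots + (p+1)$ contribute; at $p = n$ they sum to
\begin{equation*}
\tfrac{n(n+1)}{2} + \tfrac{(n+1)(n+2)}{2} - 1 = (n+1)^2 - 1,
\end{equation*}
which is congruent to $n$ modulo $2$, producing the overall sign $(-1)^n$ that appears in the theorem.

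Second, I would invoke the definition of $\phi_{HH}$ in Eq.~\ref{la14}. The isomorphism $HH_n(A) \cong H_n(A, N^0)$ together with the edge map to $E_{0,n}^{n+1}$ represents $f_0 \otimes \cdots \otimes f_n$ by $\theta_{0,n}$ in the upper right of Fig.~\ref{la25}. Eq.~\ref{la24} identifies $d^{-1}$ with the zigzag composition of Hochschild differentials $b$ and instances of the contracting homotopy $H$ from Eq.~\ref{la28}, and the preceding lemma computes exactly the outcome $\theta_{n+1,0}$ of this zigzag. The remaining edge map places $\theta_{n+1,0}$ into $H_0(A, N^{n+1}) = I_{tr}/[I_{tr}, A]$, to which $\tau$ applies; thus $\phi_{HH}(f_0 \otimes \cdots \otimes f_n) = \tau(\theta_{n+1,0})$, which is the claimed formula.

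The one point worth a separate verification is that $\theta_{n+1,0}$ genuinely lies in $I_{tr}$ so that $\tau$ is defined on it. For each $i$, the factor
\begin{equation*}
\sum_{\gamma_i \in \{\pm\}} (-1)^{\gamma_i} P_i^{-\gamma_i} f_i P_i^{\gamma_i} \;=\; P_i^{-} f_i P_i^{+} - P_i^{+} f_i P_i^{-}
\end{equation*}
lies in $I_i^0 = I_i^+ \cap I_i^-$, because $P_i^{\pm} A \subseteq I_i^{\pm}$ by Def.~\ref{marker_DefSysOfGoodIdempotents} and the ideals are two-sided, so flanking $f_i$ by a $P_i^+$ on one side and a $P_i^-$ on the other forces membership in both $I_i^+$ and $I_i^-$. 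Multiplying across $i = 1, \ldots, n$ lands in $\bigcap_i I_i^0 = I_{tr} = N^{n+1}$, as required. Because the heavy combinatorial induction is already carried out in the preceding lemma, no step in this last reduction is a real obstacle; the only genuine care needed is the sign bookkeeping sketched above, which neatly collapses to $(-1)^n$.
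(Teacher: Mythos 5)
Your proposal is correct and is essentially the paper's own proof, which reads simply "Use the lemma with $p=n$ and compose with the trace $\tau$ as in the definition of $\phi_{HH}$ in Eq.~\ref{la14}." Your expansion is accurate: the sign exponent $\frac{n(n+1)}{2} + \frac{(n+1)(n+2)}{2} - 1 = (n+1)^2 - 1 \equiv n \pmod 2$ indeed collapses to $(-1)^n$, the empty prefix and trailing tensor reduce correctly, and the sanity check that each factor $P_i^- f_i P_i^+ - P_i^+ f_i P_i^-$ lies in $I_i^0$ (hence the product lies in $I_{tr}$) is a worthwhile addition, even though the paper leaves it implicit.
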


\begin{proof}
Use the lemma with $p=n$ and compose with the trace $\tau$ as in the
definition of $\phi_{HH}$ in Eq. \ref{la14}.
\end{proof}

\begin{corollary}
\label{marker_CorCompat1}Let $(A,(I_{i}^{\pm}),\tau)$ be a unital cubically
decomposed algebra over $k$, and let $\mathfrak{g}:=A_{Lie}$ be the
associated\ Lie algebra. Then the diagram%
\[
\xymatrix{ H_{n}(\mathfrak{g},\mathfrak{g}) \ar[r]^{\varepsilon } \ar[d]_{I^{\prime }} & HH_{n}(A) \ar[d]^{\phi_{HH}} \\ H_{n+1}(\mathfrak{g},k) \ar[r]_-{\phi_{Beil}} & k }
\]
commutes up to sign. Here $\varepsilon$ refers to the comparison map from Eq.
\ref{lb1}. The composition $\phi_{HH}\circ\varepsilon$ is given by the
commutator formula%
\begin{align}
f_{0}\otimes f_{1}\wedge\cdots\wedge f_{n} &  \mapsto(-1)^{n}\tau%
{\textstyle\sum\limits_{\sigma\in\mathfrak{S}_{n}}}
\operatorname*{sgn}(\sigma)%
{\textstyle\sum\limits_{\gamma_{1}\ldots\gamma_{n}\in\{\pm\}}}
\left(  -1\right)  ^{\gamma_{1}+\cdots+\gamma_{n}}\label{l_commutatorformula}%
\\
&  (P_{1}^{-\gamma_{1}}\operatorname*{ad}(f_{\sigma^{-1}(1)})P_{1}^{\gamma
_{1}})\cdots(P_{n}^{-\gamma_{n}}\operatorname*{ad}(f_{\sigma^{-1}(n)}%
)P_{n}^{\gamma_{n}})f_{0}\text{.}\nonumber
\end{align}
If $n=1$ and $[f_{0},f_{1}]=0$, then this specializes to%
\begin{equation}
f_{0}\otimes f_{1}\mapsto\tau\lbrack P_{1}^{+}f_{0},f_{1}]\text{.}\label{lam1}%
\end{equation}

\end{corollary}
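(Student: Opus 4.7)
My plan is to reduce both the Hochschild side and the Lie side of the square to concrete idempotent formulas based on the \emph{same} system $\{P_i^+\}$, and then compare the two. For the right-vertical map this is Theorem~\ref{thm_FormulaForHochschildResidueSymbol}; for the left-vertical composition $\phi_{Beil}\circ I'$ I would invoke the analogous explicit formula established in \cite[Prop.~24]{MR3207578}.

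First I would compute $\phi_{HH}\circ\varepsilon$ directly. Unpacking the definition \ref{lb1} of $\varepsilon$ and substituting the resulting signed sum of tensors into the formula of Theorem~\ref{thm_FormulaForHochschildResidueSymbol} gives at once
\[
(-1)^n\tau \sum_{\sigma\in\mathfrak{S}_n}\operatorname{sgn}(\sigma) \sum_{\gamma_1,\dots,\gamma_n\in\{\pm\}}(-1)^{\gamma_1+\cdots+\gamma_n}\, P_1^{-\gamma_1}f_{\sigma^{-1}(1)}P_1^{\gamma_1}\cdots P_n^{-\gamma_n}f_{\sigma^{-1}(n)}P_n^{\gamma_n}\,f_0.
\]
To recognize this as Eq.~\ref{l_commutatorformula} I would use the elementary identity $\bigl(P_i^{-\gamma_i}\operatorname{ad}(f)P_i^{\gamma_i}\bigr)(x)=P_i^{-\gamma_i}fP_i^{\gamma_i}x$, valid because the cross-term $P_i^{-\gamma_i}P_i^{\gamma_i}xf$ vanishes by $P_i^+P_i^-=0$. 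Iterating this identity over the $n$ slots converts the displayed expression into Eq.~\ref{l_commutatorformula} verbatim.

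Next, to verify commutativity (up to sign) of the square, I would invoke the formula for $\phi_{Beil}$ from \cite[Prop.~24]{MR3207578}: it is obtained by exactly the same zig-zag recipe as in Eq.~\ref{la25}, but applied to the Lie bicomplex of Eq.~\ref{la11} and with the Lie-theoretic contracting homotopy in place of the Hochschild one. Evaluated on $I'(f_0\otimes f_1\wedge\cdots\wedge f_n)=(-1)^n\otimes f_0\wedge\cdots\wedge f_n$, that formula produces the same idempotent-sandwich sum as above, up to a global sign arising from three sources: (i) the $(-1)^n$ built into $I'$, (ii) the degree shift between $\phi_{Beil}$ (homological degree $n+1$) and $\phi_{HH}$ (degree $n$), and (iii) the differing signs in the two contracting homotopies. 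The remaining check is then a purely combinatorial sign audit, which is the main (and essentially only) obstacle, and is precisely why the corollary is stated only \emph{up to sign}.

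For the specialization to $n=1$ with $[f_0,f_1]=0$, the symmetric group is trivial and Eq.~\ref{l_commutatorformula} collapses to $-\tau\sum_{\gamma_1}(-1)^{\gamma_1}P_1^{-\gamma_1}f_1 P_1^{\gamma_1}f_0=\tau(P_1^+f_1P_1^--P_1^-f_1P_1^+)f_0=\tau[P_1^+,f_1]f_0$, and the hypothesis $[f_0,f_1]=0$ turns this into $\tau[P_1^+f_0,f_1]$, matching Eq.~\ref{lam1}. This specialization is also a reassuring sanity check, since it reproduces Tate's one-dimensional formula Eq.~\ref{la8} and thereby pins down the sign conventions used throughout the argument.
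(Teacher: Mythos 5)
Your first two steps match the paper exactly: unpacking $\varepsilon$ into Theorem~\ref{thm_FormulaForHochschildResidueSymbol} to get the signed idempotent--sandwich sum, and then using $P_i^{-\gamma}\operatorname{ad}(f)P_i^{\gamma}x = P_i^{-\gamma}fP_i^{\gamma}x$ (a consequence of $P_i^{-\gamma}P_i^{\gamma}=0$) to rewrite it as Eq.~\ref{l_commutatorformula}. The $n=1$ specialization is also handled the same way. That part is fine.

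Where there is a genuine gap is in how you get from this to $\phi_{Beil}\circ I'$. You propose to read off an explicit formula for $\phi_{Beil}$ by running ``the same zig-zag recipe \ldots applied to the Lie bicomplex of Eq.~\ref{la11}.'' But the bicomplex of Eq.~\ref{la11} is built from the relative Chevalley--Eilenberg complexes $CE(\mathfrak{j})_r=\mathfrak{j}\wedge\bigwedge^{r-1}\mathfrak{g}$, and the paper is at pains to point out (in \S\ref{section_Etiology}) that this $\wedge$-variant is precisely the one in which one \emph{cannot} write down the zig-zag explicitly, because the mixed exterior products lack a convenient basis for maintaining the homotopy-corrected cocycles. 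The explicit zig-zag formula in \cite[Prop.~24, Thm.~25]{MR3207578} is proved for the \emph{coefficient} ($\otimes$-)variant $\left.^{\otimes}T_\bullet^p\right.$, whose output is the map $\left.^{\otimes}\operatorname{res}_\ast\right.: H_n(\mathfrak{g},\mathfrak{g})\to k$, \emph{not} $\phi_{Beil}$ itself. What the paper actually does is: match $\phi_{HH}\circ\varepsilon$ with $\left.^{\otimes}\operatorname{res}_\ast\right.$ up to sign, and then invoke the comparison \cite[Lemma~23]{MR3207578} relating the two bicomplex spectral sequences (the one for $\left.^{\otimes}T\right.$ and the one for $\left.^{\wedge}T\right.$) to transport the identity to $\phi_{Beil}\circ I'$. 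Your write-up collapses these two distinct objects into one and never addresses the $\otimes$-versus-$\wedge$ passage, which is the only nontrivial content left once the Hochschild side has been made explicit. Without that comparison lemma, calling the rest ``a purely combinatorial sign audit'' undersells what is actually missing: a spectral-sequence comparison between two genuinely different complexes.

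A smaller inaccuracy: \cite[Prop.~24]{MR3207578} gives the \emph{inductive step} for the zig-zag in the $\otimes$-bicomplex (the analogue of the unnamed lemma in \S\ref{subsect_HochschildConstruction}), not the final formula for a residue map; the final formula is \cite[Thm.~25]{MR3207578}.
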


The last equation links these formulae with the classical one-dimensional case
as found in Eq. \ref{la8}.

\begin{proof}
Let $\{P_{i}^{+}\}$ be any system of good idempotents. A direct computation of
$\phi_{HH}\circ\varepsilon$ yields the explicit formula%
\begin{align*}
f_{0}\otimes f_{1}\wedge\cdots\wedge f_{n} &  \mapsto(-1)^{n}\tau%
{\textstyle\sum\limits_{\sigma\in\mathfrak{S}_{n}}}
\operatorname*{sgn}(\sigma)%
{\textstyle\sum\limits_{\gamma_{1}\ldots\gamma_{n}\in\{\pm\}}}
\left(  -1\right)  ^{\gamma_{1}+\cdots+\gamma_{n}}\\
&  (P_{1}^{-\gamma_{1}}f_{\sigma^{-1}(1)}P_{1}^{\gamma_{1}})\cdots
(P_{n}^{-\gamma_{n}}f_{\sigma^{-1}(n)}P_{n}^{\gamma_{n}})f_{0}\text{,}%
\end{align*}
which agrees (up to sign) with the morphism $\left.  ^{\otimes}%
\operatorname*{res}\nolimits_{\ast}\right.  $ described in \cite[Thm. 25, and
following discussion]{MR3207578}.\ The commutativity then follows from
\cite[Lemma 23]{MR3207578}: Extended on the right with the trace, this reads%
\[%
\xymatrix{ H_{n}(\mathfrak{g},\mathfrak{g}) \ar[d]_{I^{\prime}} \ar
[r] \ar@/^2pc/[rrr]^{ {\left. ^{\otimes}\mathrm{res}\right.} } & {\left
. ^{\otimes}E^{n+1}_{0,n+1}\right.} \ar[d] & {\left. ^{\otimes}E^{n+1}%
_{n+1,1}\right.} \ar[d] \ar[l]^{d_{n+1}}_{\cong}
\ar[r] & k \ar[d]^{\cong} \\ H_{n+1}(\mathfrak{g},k) \ar[r] \ar@
/_2pc/[rrr]_{ {\phi}_{Beil} } & {\left. ^{\wedge}E^{n+1}_{0,n+1}\right.}
& {\left. ^{\wedge}E^{n+1}_{n+1,1}\right.} \ar[l]_{d_{n+1}}^{\cong}
\ar[r] & k. }%
\]
in the notation of the reference. The formula $P^{-\gamma}\operatorname*{ad}%
(f)P^{\gamma}g=P^{-\gamma}[f,P^{\gamma}g]=P^{-\gamma}fP^{\gamma}g-P^{-\gamma
}P^{\gamma}gf=P^{-\gamma}fP^{\gamma}g$ (since $P^{-\gamma}P^{\gamma}=0$)
implies Eq. \ref{l_commutatorformula}. For $n=1$, this specializes to%
\begin{align*}
f_{0}\otimes f_{1} &  \mapsto-\tau%
{\textstyle\sum\limits_{\gamma\in\{\pm\}}}
(-1)^{\gamma}P_{1}^{-\gamma}[P_{1}^{\gamma}f_{0},f_{1}]\\
&  =\tau(-P_{1}^{+}([f_{0},f_{1}]-[P_{1}^{+}f_{0},f_{1}])+P_{1}^{-}[P_{1}%
^{+}f_{0},f_{1}])
\end{align*}
and if $[f_{0},f_{1}]=0$ (as would be the case if $f_{0},f_{1}$ are functions
on a variety) this simplifies to Eq. \ref{lam1} by using $P_{1}^{+}+P_{1}%
^{-}=\mathbf{1}$.
\end{proof}

\begin{proposition}
\label{prop_coordinatecomp}Let $k$ be a field and $k^{\prime}/k$ a finite
field extension. For the equicharacteristic $n$-local field%
\[
K:=k^{\prime}((t_{1}))\cdots((t_{n}))\text{,}%
\]
consider the $\phi_{HH}$ associated to its standard cubically decomposed
algebra $E_{K}$ (\cite{MR3317764}, \cite{MR3536437}, \cite{bgwTateModule}, or
see the proof for an explanation).

\begin{enumerate}
\item Then for all $\beta\in k^{\prime}$, we have
\[
\phi_{HH}^{E_{j}}(\beta\cdot t_{1}^{c_{0,1}}\ldots t_{n}^{c_{0,n}}%
\otimes\cdots\otimes t_{1}^{c_{n,1}}\ldots t_{n}^{c_{n,n}})=\operatorname*{Tr}%
\nolimits_{k_{j}/k}(\beta)%
{\textstyle\prod\nolimits_{i=1}^{n}}
c_{i,i}%
\]
whenever $\forall i:\sum_{p=0}^{n}c_{p,i}=0$ and zero otherwise.

\item Precomposed with the HKR isomorphism (cf. Eq. \ref{lb5}), this yields%
\begin{align*}
\Omega_{K/k}^{n}\longrightarrow HH_{n}(K) &  \longrightarrow k\\
\beta\cdot f_{0}\mathrm{d}f_{1}\wedge\cdots\wedge\mathrm{d}f_{n} &
\longmapsto\operatorname*{Tr}\nolimits_{k^{\prime}/k}(\beta)\det%
\begin{pmatrix}
c_{1,1} & \cdots & c_{n,1}\\
\vdots & \ddots & \vdots\\
c_{1,n} & \cdots & c_{n,n}%
\end{pmatrix}
\end{align*}
for $f_{p}=t_{1}^{c_{p,1}}\cdots t_{n}^{c_{p,n}}$ $(0\leq p\leq n)$ whenever
$\forall i:\sum_{p=0}^{n}c_{p,i}=0$, and zero otherwise.

\item For $f\in K$ given by $f=\sum f_{\alpha_{1}\ldots\alpha_{n}}%
t_{1}^{\alpha_{1}}\cdots t_{n}^{\alpha_{n}}$ (with coefficients $f_{\alpha
_{1}\ldots\alpha_{n}}\in k^{\prime}$), we have%
\begin{align*}
\Omega_{K/k}^{n}\longrightarrow HH_{n}(K) &  \longrightarrow k\\
f\mathrm{d}t_{1}\wedge\cdots\wedge\mathrm{d}t_{n} &  \longmapsto
\operatorname*{Tr}\nolimits_{k^{\prime}/k}(f_{-1,\ldots,-1})\text{.}%
\end{align*}

\end{enumerate}
\end{proposition}

\begin{proof}
\textbf{(1)} Yekutieli gives a construction of the cubically decomposed
algebra $E_{K}$ \cite{MR3317764}. Alternatively, write the underlying vector
space of the $n$-local field as%
\[
k^{\prime}((t_{1}))((t_{2}))\ldots((t_{n}))=\underset{i_{n}}{\underrightarrow
{\operatorname*{colim}}}\underset{j_{n}}{\underleftarrow{\lim}}\cdots
\underset{i_{1}}{\underrightarrow{\operatorname*{colim}}}\underset{j_{1}%
}{\underleftarrow{\lim}}\,\frac{1}{t_{1}^{i_{1}}\cdots t_{n}^{i_{n}}}%
k[t_{1},\ldots,t_{n}]/(t_{1}^{j_{1}},\ldots,t_{n}^{j_{n}})\text{.}%
\]
Following \cite[Example 10]{bgwTateModule}, this defines an $n$-Tate object in
finite-dimensional $k$-vector spaces and the main results of
\cite{bgwTateModule} imply that its endomorphism algebra in the category of
$n$-Tate objects carries a cubically decomposed structure, call it $E_{K}$.
The equivalence of both approaches was shown in \cite[Theorem 3.8]{MR3536437}.
Moreover, loc. cit. shows that viewing this $n$-Tate object as a $k^{\prime}
$-vector space yields a faithful functor, i.e. any such endomorphism can be
thought of as a $k^{\prime}$-linear map. For $f\in I_{tr}$, the trace is
evaluated as follows: First, pick $i_{n}$ small enough such that the image
lies in%
\[
L_{1}:=\underset{j_{n}}{\underleftarrow{\lim}}\cdots\underset{i_{1}%
}{\underrightarrow{\operatorname*{colim}}}\underset{j_{1}}{\underleftarrow
{\lim}}\,\frac{1}{t_{1}^{i_{1}}\cdots t_{n}^{i_{n}}}k[t_{1},\ldots
,t_{n}]/(t_{1}^{j_{1}},\ldots,t_{n}^{j_{n}})\text{,}%
\]
and then $j_{n}$ great enough such that $f$ sends%
\[
L_{1}^{\prime}:=\underset{i_{n-1}}{\underrightarrow{\operatorname*{colim}}%
}\underset{j_{n-1}}{\underleftarrow{\lim}}\cdots\underset{i_{1}}%
{\underrightarrow{\operatorname*{colim}}}\underset{j_{1}}{\underleftarrow
{\lim}}\,\frac{1}{t_{1}^{i_{1}}\cdots t_{n}^{i_{n}}}k[t_{1},\ldots
,t_{n}]/(t_{1}^{j_{1}},\ldots,t_{n}^{j_{n}})
\]
to zero. Such values for $i_{n}$ and $j_{n}$ exist since $f$ lies (in
particular) in $I_{1}^{0}$. Using axiom \textbf{T2} of Tate's trace, Prop.
\ref{BT_PropTateTraceConstruction}, the trace of $f$ agrees with the trace of
$f\mid_{L_{1}/L_{1}^{\prime}}$. We see that this step has reduced computing
the trace of an endomorphism of $n$ limit-colimit pairs (over
finite-dimensional vector spaces), to computing the trace for just $(n-1)$
limit-colimit pairs. Repeating this reduction, it suffices to evaluate the
trace on a finite-dimensional vector space, where by axiom \textbf{T1} it
agrees with the ordinary trace. Moreover, as these reduction steps just
restrict to ranges of exponents of the $t_{1}^{?}\cdots t_{n}^{?}$ appearing,
some finite system of such monomials forms a $k^{\prime}$-basis.\newline%
\textbf{(2)} Henceforth, in order to distinguish clearly between $t_{i}$ as a
multiplication operator $x\mapsto t_{i}\cdot x$, or as $k^{\prime}$-vector
space basis elements, we write the latter in bold letters $\mathbf{t}_{i}$.
Define idempotents $P_{i}^{+}$ by%
\[
P_{i}^{+}\sum f_{\lambda_{1}\ldots\lambda_{n}}\mathbf{t}_{1}^{\lambda_{1}%
}\cdots\mathbf{t}_{n}^{\lambda_{n}}:=\sum\delta_{\lambda_{i}\geq0}%
f_{\lambda_{1}\ldots\lambda_{n}}\mathbf{t}_{1}^{\lambda_{1}}\cdots
\mathbf{t}_{n}^{\lambda_{n}}\text{.}%
\]
Define $P_{i}^{-}=\mathbf{1}-P_{i}^{+}$. We know that $\operatorname*{im}%
P_{i}^{+}\subseteq I_{i}^{+}$ is a lattice and $P_{i}^{-}(\operatorname*{im}%
P_{i}^{+})=0$, so we have a system of good idempotents in the sense of
Definition \ref{marker_DefSysOfGoodIdempotents}. Thus, by Thm.
\ref{thm_FormulaForHochschildResidueSymbol} we have%
\begin{equation}
\phi_{HH}(f_{0}\otimes\cdots\otimes f_{n})=(-1)^{n}\operatorname*{tr}%
\nolimits_{k}M=(-1)^{n}\operatorname*{Tr}\nolimits_{k^{\prime}/k}%
(\operatorname*{tr}\nolimits_{k^{\prime}}M)\label{lbb101}%
\end{equation}
for the operator $M$ defined by
\[
M:=%
{\textstyle\sum\limits_{\gamma_{1}\ldots\gamma_{n}\in\{\pm\}}}
\left(  -1\right)  ^{\gamma_{1}+\cdots+\gamma_{n}}P_{1}^{-\gamma_{1}}%
f_{1}P_{1}^{\gamma_{1}}\cdots P_{n}^{-\gamma_{n}}f_{n}P_{n}^{\gamma_{n}}%
f_{0}\text{.}%
\]
The remaining computation is essentially the same as in the proof of
\cite[Thm. 26]{MR3207578}, so we just sketch the key steps: Letting
$f_{m}:=\mathbf{t}_{1}^{c_{m,1}}\cdots\mathbf{t}_{n}^{c_{m,n}}$ for
$c_{m,i}\in\mathbf{Z}$ (and $1\leq m\leq n$; $1\leq i\leq n$) and
$f_{0}:=\beta\mathbf{t}_{1}^{c_{0,1}}\cdots\mathbf{t}_{n}^{c_{0,n}}$, one
easily computes%
\[
P_{m}^{-}f_{m}P_{m}^{+}\mathbf{t}_{1}^{\lambda_{1}}\cdots\mathbf{t}%
_{n}^{\lambda_{n}}=\delta_{0\leq\lambda_{m}<-c_{m,m}}\mathbf{t}_{1}%
^{\lambda_{1}+c_{m,1}}\cdots\mathbf{t}_{n}^{\lambda_{n}+c_{m,n}}%
\qquad\text{for }1\leq m\leq n\text{.}%
\]
This formula closely mimics the one-dimensional computation in Lemma
\ref{Lemma_LocalTateFormulaDimOne}. With this we can explicitly compute the
action of $M$ on a monomial. We get%
\begin{align*}
M\mathbf{t}_{1}^{\lambda_{1}}\cdots\mathbf{t}_{n}^{\lambda_{n}}=\beta%
{\textstyle\prod\limits_{i=1}^{n}}
(\delta_{0\leq\lambda_{i}+c_{0,i}+\sum_{p=i+1}^{n}c_{p,i}<-c_{i,i}} &
-\delta_{-c_{i,i}\leq\lambda_{i}+c_{0,i}+\sum_{p=i+1}^{n}c_{p,i}<0})\\
&  \mathbf{t}_{1}^{\lambda_{1}+c_{0,1}+\sum_{p=1}^{n}c_{p,1}}\cdots
\mathbf{t}_{n}^{\lambda_{n}+c_{0,n}+\sum_{p=1}^{n}c_{p,n}}\text{.}%
\end{align*}
It is immediately clear that this operator can have a non-zero trace only if
$\forall i:\sum_{p=0}^{n}c_{p,i}=0$ holds, because otherwise it is visibly
nilpotent and we can invoke axiom \textbf{T3} of Tate's trace. This proves the
vanishing part of the claim. Now assume this condition holds and simplify the
formula for $M$ accordingly. A simple eigenvalue count reveals%
\begin{equation}
\operatorname*{tr}\nolimits_{k^{\prime}}M=%
{\textstyle\prod\nolimits_{i=1}^{n}}
(-c_{i,i})=(-1)^{n}\beta%
{\textstyle\prod\nolimits_{i=1}^{n}}
c_{i,i}\text{,}\label{lbb102}%
\end{equation}
where $M$ is still viewed as an endomorphism of a $k^{\prime}$-vector space.
See the proof of \cite[Thm. 26]{MR3207578} for the full details. Finally,
$\operatorname*{tr}\nolimits_{k}M=\operatorname*{Tr}\nolimits_{k^{\prime}%
/k}(\operatorname*{tr}\nolimits_{k^{\prime}}M)$ computes the value in
question; the signs $(-1)^{n}$ from Eq. \ref{lbb101} and Eq. \ref{lbb102}
cancel each other out. \textbf{(3)} Plugging in the antisymmetrizer coming
from the HKR isomorphism, we get%
\[
=\beta%
{\textstyle\sum\nolimits_{\pi\in\mathfrak{S}_{n}}}
\operatorname*{sgn}(\pi)%
{\textstyle\prod\nolimits_{i=1}^{n}}
c_{\pi(i),i}\text{,}%
\]
which up to the factor $\beta$ is exactly the Leibniz formula for the
determinant. \textbf{(4)} In this special case, let $f_{0}:=f$ and
$f_{m}=t_{m}$ for $1\leq m\leq n$ and proceed basically as before. We compute%
\[
P_{m}^{-}f_{m}P_{m}^{+}\mathbf{t}_{1}^{\lambda_{1}}\cdots\mathbf{t}%
_{n}^{\lambda_{n}}=\delta_{\lambda_{m}=0}\mathbf{t}_{1}^{\lambda_{1}}%
\cdots\mathbf{t}_{m}^{\lambda_{m}+1}\cdots\mathbf{t}_{n}^{\lambda_{n}}%
\qquad\text{for }1\leq m\leq n
\]
on monomials. As before, we use this to compute the trace of the operator%
\[
M:=%
{\textstyle\sum\limits_{\gamma_{1}\ldots\gamma_{n}\in\{\pm\}}}
\left(  -1\right)  ^{\gamma_{1}+\cdots+\gamma_{n}}P_{1}^{-\gamma_{1}}%
f_{1}P_{1}^{\gamma_{1}}\cdots P_{n}^{-\gamma_{n}}f_{n}P_{n}^{\gamma_{n}}%
f_{0}\text{,}%
\]
which this time unwinds as%
\[
M\mathbf{t}_{1}^{\lambda_{1}}\cdots\mathbf{t}_{n}^{\lambda_{n}}=\sum
_{c_{0,1},\ldots,c_{0,n}}f_{c_{0,1}\ldots c_{0,n}}%
{\textstyle\prod\limits_{i=1}^{n}}
(-\delta_{\lambda_{i}+c_{0,i}=-1})\mathbf{t}_{1}^{\lambda_{1}+c_{0,1}+1}%
\cdots\mathbf{t}_{n}^{\lambda_{n}+c_{0,n}+1}%
\]
and we see that only the summand with $c_{0,i}=-1-\lambda_{i}$ remains, giving%
\[
=(-1)^{n}f_{(-1-\lambda_{1})\ldots(-1-\lambda_{n})}\mathbf{t}_{1}^{\lambda
_{1}+(-1-\lambda_{1})+1}\cdots\mathbf{t}_{n}^{\lambda_{n}+(-1-\lambda_{n}%
)+1}\text{.}%
\]
This is nilpotent unless $\lambda_{1}=\cdots=\lambda_{n}=0$ and in this case
has trace $\operatorname*{Tr}\nolimits_{k^{\prime}/k}(f_{-1,\ldots,-1})$,
proving the claim.
\end{proof}

Next, we shall relate various $\phi_{HH}$ for changing cubically decomposed
algebras. To clarify the distinction, let us agree to write $\phi_{HH}%
^{A}:HH_{n}(A)\rightarrow k$ instead of $\phi_{HH}$ plain.

\begin{theorem}
[Local formula]\label{thm_localformula}Suppose $X/k$ is a reduced finite type
scheme of pure dimension $n$ over a perfect field $k$. Suppose $\triangle
=(\eta_{0}>\cdots>\eta_{n})\in S\left(  X\right)  _{n}$ with
$\operatorname*{codim}\nolimits_{X}\overline{\{\eta_{i}\}}=i$. Then there is a
canonical finite decomposition%
\[
A(\triangle,\mathcal{O}_{X})\cong%
{\textstyle\prod}
K_{j}%
\]
with each $K_{j}$ an $n$-local field.

\begin{enumerate}
\item Each $E_{j}:=\{f\in E_{\triangle}\mid fK_{j}\subseteq K_{j}$,
$fK_{r}=0\ $for $r\neq j\}$ with ideals $J_{i}^{\pm}:=I_{i}^{\pm}\cap E_{j}$
is a cubically decomposed algebra over $k$ and for $f\in HH_{n}(\mathcal{O}%
_{\eta_{0}})$ we have%
\begin{equation}
\phi_{HH}^{E_{\triangle}}(f)=%
{\textstyle\sum\nolimits_{j}}
\phi_{HH}^{E_{j}}(f)\text{.}\label{lb3}%
\end{equation}

\item There exists (non-canonically) an isomorphism $K_{j}\simeq k_{j}%
((t_{1}))\cdots((t_{n}))$ with $k_{j}/k$ a finite field extension such that
for all $\beta\in k_{j}$%
\[
\phi_{HH}^{E_{j}}(\beta\cdot t_{1}^{c_{0,1}}\ldots t_{n}^{c_{0,n}}%
\otimes\cdots\otimes t_{1}^{c_{n,1}}\ldots t_{n}^{c_{n,n}})=\operatorname*{Tr}%
\nolimits_{k_{j}/k}(\beta)%
{\textstyle\prod\nolimits_{i=1}^{n}}
c_{i,i}%
\]
whenever $\forall i:\sum_{p=0}^{n}c_{p,i}=0$ and zero otherwise.

\item Precomposed with the HKR isomorphism (cf. Eq. \ref{lb5}), this yields%
\begin{align*}
\Omega_{K_{j}/k}^{n}\longrightarrow HH_{n}(K_{j}) &  \longrightarrow k\\
\beta\cdot f_{0}\mathrm{d}f_{1}\wedge\cdots\wedge\mathrm{d}f_{n} &
\longmapsto\operatorname*{Tr}\nolimits_{k_{j}/k}(\beta)\det%
\begin{pmatrix}
c_{1,1} & \cdots & c_{n,1}\\
\vdots & \ddots & \vdots\\
c_{1,n} & \cdots & c_{n,n}%
\end{pmatrix}
\end{align*}
for $f_{p}=t_{1}^{c_{p,1}}\cdots t_{n}^{c_{p,n}}$ $(0\leq p\leq n)$ whenever
$\forall i:\sum_{p=0}^{n}c_{p,i}=0$ and zero otherwise.

\item For $f\in K_{j}$ given by $f=\sum f_{\alpha_{1}\ldots\alpha_{n}}%
t_{1}^{\alpha_{1}}\cdots t_{n}^{\alpha_{n}}$ (with coefficients $f_{\alpha
_{1}\ldots\alpha_{n}}\in k_{j}$) we have%
\begin{align*}
\Omega_{K_{j}/k}^{n}\longrightarrow HH_{n}(K_{j}) &  \longrightarrow k\\
f\mathrm{d}t_{1}\wedge\cdots\wedge\mathrm{d}t_{n} &  \longmapsto
\operatorname*{Tr}\nolimits_{k_{j}/k}(f_{-1,\ldots,-1})\text{.}%
\end{align*}

\end{enumerate}
\end{theorem}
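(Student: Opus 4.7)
For part (1), I would observe that the direct product decomposition $A(\triangle,\mathcal{O}_X)\cong\prod K_j$ from Prop.~\ref{TATE_StructureOfLocalAdelesProp} is respected by the lattice filtrations defining the ideals $I_i^{\pm}$: by the inductive Def.~\ref{def_HigherAdeleOperatorIdeals}, a lattice in the product is precisely a tuple of lattices in the factors. Consequently, I can build block-diagonal systems of good idempotents $P_i^+=(P_{i,j}^+)_j$ in the sense of Def.~\ref{marker_DefSysOfGoodIdempotents}, making each $(E_j,J_i^{\pm},\tau_j)$ a cubically decomposed algebra and splitting the Tate trace additively by axiom \textbf{T2}. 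For $f\in HH_n(\mathcal{O}_{\eta_0})$ the corresponding multiplication operators act diagonally on $\prod K_j$; each factor in Thm.~\ref{thm_FormulaForHochschildResidueSymbol} is then itself block-diagonal, and $\tau=\sum_j\tau_j$ yields Eq.~\ref{lb3}.

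For part (2), fix an isomorphism $K_j\simeq k_j((t_1))\cdots((t_n))$ and pick good idempotents $P_i^+$ that, level by level in the nested DVF tower, project onto non-negative powers of the appropriate uniformizer. Using the identity $\sum_{\gamma}(-1)^{\gamma}P_i^{-\gamma}fP_i^{\gamma}=-[P_i^+,f]$, Thm.~\ref{thm_FormulaForHochschildResidueSymbol} simplifies to
\[
\phi_{HH}^{E_j}(f_0\otimes\cdots\otimes f_n)=\tau_j\left(\prod_{i=1}^n [P_i^+,f_i]\right) f_0.
\]
For a monomial $f_i=t_1^{c_{i,1}}\cdots t_n^{c_{i,n}}$, the commutator $[P_i^+,f_i]$ vanishes against every $t_j$-factor with $j\neq i$ (since $P_i^+$ depends only on the level-$i$ uniformizer and monomials in commuting variables factor through), reducing to $t_1^{c_{i,1}}\cdots[P_i^+,t_i^{c_{i,i}}]\cdots t_n^{c_{i,n}}$. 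The full product then shifts the $t_i$-degree by $\sum_{p=0}^n c_{p,i}$ for each $i$. A Tate-trace computation in direct analogy with Lemma~\ref{Lemma_LocalTateFormulaDimOne}, iterated once per level by invoking the inductive construction of $I_{i\triangle}^{\pm}$, shows that the trace vanishes unless all column sums vanish, and in that case reduces (by the finite-potence argument of Prop.~\ref{Prop_LocalEAtFullFlagIsCubicallyDecomposedAlgebra} and axioms \textbf{T1}--\textbf{T3}) to $\operatorname*{Tr}\nolimits_{k_j/k}(\beta)\prod_{i=1}^n c_{i,i}$.

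Parts (3) and (4) are formal consequences. For (3), expand $\beta\cdot f_0\,\mathrm{d}f_1\wedge\cdots\wedge\mathrm{d}f_n$ via the HKR map in Eq.~\ref{lb5} and sum the formula of (2) over $\sigma\in\mathfrak{S}_n$: this produces $\operatorname*{Tr}\nolimits_{k_j/k}(\beta)\sum_\sigma\operatorname*{sgn}(\sigma)\prod_i c_{\sigma^{-1}(i),i}$, which is exactly the determinant (invariant under transposition of the exponent matrix). For (4), specialize $f_p:=t_p$ for $p\geq 1$ so that the lower $n\times n$ block of the exponent matrix is the identity; the column-sum condition forces row $0$ to equal $(-1,\ldots,-1)$, the determinant is $1$, and by $k_j$-linearity in $f$ only the single coefficient $f_{-1,\ldots,-1}$ survives.

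The main obstacle is the multi-variable Tate-trace computation in part (2). One must verify finite-potence of $\prod_i[P_i^+,t_i^{c_{i,i}}]$ interleaved with the intervening $t_j$-monomials, identify a finite-dimensional invariant subspace on which the resulting operator acts by a diagonal scalar matrix, and compute its ordinary trace. The cleanest approach is induction on $n$, at each stage peeling off the outermost uniformizer and reducing the residue field to the $(n-1)$-local case, mirroring the induction already used in the proof of Prop.~\ref{Prop_LocalEAtFullFlagIsCubicallyDecomposedAlgebra}; everything else is bookkeeping of signs and a permutation argument.
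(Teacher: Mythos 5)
Your argument follows the paper's proof in all essentials: block-diagonal choices and additivity of the finite-potent trace for (1), coordinate-adapted good idempotents $P_i^+$ plugged into the explicit formula of Thm.\ \ref{thm_FormulaForHochschildResidueSymbol} for (2), and the HKR antisymmetrizer producing the Leibniz expansion of the determinant for (3) and (4). The only stylistic difference is your rewrite of the sandwich $\sum_\gamma(-1)^\gamma P_i^{-\gamma}f_iP_i^\gamma$ as $\pm[P_i^+,f_i]$ (and note the phrase ``vanishes against'' should be ``commutes with'' --- $P_i^+$ commutes with multiplication by $t_j$, $j\ne i$, so the commutator localizes to $[P_i^+,t_i^{c_{i,i}}]$); the paper instead tracks the two $\delta$-terms of the sandwich separately and, for (4), computes $M\mathbf{t}^\lambda$ directly for a general Laurent series $f$, which sidesteps the (mild but real) continuity point hidden in your appeal to ``$k_j$-linearity in $f$'' when passing from monomials to infinite sums.
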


A word of warning: In (2), while there always exists an isomorphism
$K_{j}\simeq k_{j}((t_{1}))\cdots((t_{n}))$ such that the above claims hold,
it is by no means true that any isomorphism between these fields has these
properties. This very subtle behaviour is discussed extensively in
\cite{MR3317764} and \cite{MR3536437}.

\begin{proof}
Almost all of the first claim follows directly from Prop.
\ref{TATE_StructureOfLocalAdelesProp}. \textbf{(1)} Observe that the $E_{j}$
are associative algebras. Define $J_{i}^{\pm}:=I_{i}^{\pm}\cap E_{j}$ with
$I_{i}^{\pm}$ the ideals of the cubically decomposed algebra structure of
$E_{\triangle}$, cf. Prop.
\ref{Prop_LocalEAtFullFlagIsCubicallyDecomposedAlgebra}. It is clear that the
$J_{i}^{\pm}$ are two-sided ideals in $E_{j}$ and we claim that $J_{i}%
^{+}+J_{i}^{-}=E_{j}$. To see this, let $x\in E_{j}$ be given. We have
$A(\triangle,\mathcal{O}_{X})=\prod K_{j}$, so let $e_{j}$ be the idempotent
of the $j$-th factor. It is easy to check that $e_{j}\in E_{\triangle}$. Write
$x=x^{+}+x^{-}$ with $x^{\pm}\in I_{i}^{\pm}$. Now $e_{j}xe_{j}=e_{j}%
x^{+}e_{j}+e_{j}x^{-}e_{j}$. Since the $I_{i}^{\pm}$ are ideals, $e_{j}x^{\pm
}e_{j}\in I_{i}^{\pm}$, but also $e_{j}x^{\pm}e_{j}\in E_{j}$. It follows that
$e_{j}x^{\pm}e_{j}\in I_{i}^{\pm}\cap E_{j}=J_{i}^{\pm}$. On the other hand,
$e_{j}xe_{j}=x$. The converse inclusion is obvious, so we have $J_{i}%
^{+}+J_{i}^{-}=E_{\triangle}\cap E_{j}=E_{j}$. Since $J_{tr}=\bigcap
_{i=1,\ldots,n}\bigcap_{s\{\pm\}}J_{i}^{s}\subseteq I_{tr}$ we can use the
trace map of $E_{\triangle}$. This proves that $(E_{j},\{J_{i}^{\pm
}\},\operatorname*{tr}\nolimits_{I_{tr}})$ is a cubically decomposed algebra.
In particular, the maps $\phi_{HH}^{E_{j}}$ exist. The embedding
$\mathcal{O}_{\eta_{0}}\hookrightarrow A(\triangle,\mathcal{O}_{X})\cong%
{\textstyle\prod}
K_{j}$ is diagonal, i.e. $f\mapsto(f,\ldots,f)$. As a result, the associated
multiplication operator in $E_{\triangle}$ is diagonal in the $K_{j}$,
therefore Eq. \ref{lb3} holds. \textbf{(2)} For the evaluation of $\phi
_{HH}^{E_{j}}$, we want to pick an isomorphism of fields%
\[
\rho:K_{j}\overset{\sim}{\longrightarrow}k_{j}((t_{1}))\cdots((t_{n}))
\]
with the following properties: (1) $\rho$ is an isomorphism of fields, (2)
$\rho$ is an isomorphism of $k$-vector spaces, and (3) $\rho$ induces an
isomorphism of cubically decomposed algebras $(E_{j},\{J_{i}^{\pm
}\},\operatorname*{tr}\nolimits_{I_{tr}})$ to the cubically decomposed algebra
structure of $k_{j}((t_{1}))\cdots((t_{n}))$, as in Prop.
\ref{prop_coordinatecomp}:%
\[
(E_{j},\{J_{i}^{\pm}\},\operatorname*{tr}\nolimits_{I_{tr}})\longrightarrow
E_{k_{j}((t_{1}))\cdots((t_{n}))}\text{,}\qquad f\longmapsto\rho\circ
f\circ\rho^{-1}\text{.}%
\]

The existence of such a $\rho$ follows from \cite[Theorem 0.2, (3)]%
{MR3536437}. Since the construction of $\phi_{HH}$ is intrinsic to the
cubically decomposed algebra structure, this isomorphism implies that we may
perform our computation on the level of $k_{j}((t_{1}))\cdots((t_{n}))$, so
the entire claim reduces to Prop. \ref{prop_coordinatecomp}.
\end{proof}

\section{\label{marker_SectAlternativeApproach}A new approach}

\subsection{Introduction}

We want to change our perspective. Let $(A,(I_{i}^{\pm}),\tau)$ be a cubically
decomposed algebra. So far we have always worked in the category of
$A$-bimodules and considered exact sequences of $A$-bimodules like%
\begin{equation}
0\longrightarrow I_{n}^{0}\overset{\operatorname*{diag}}{\longrightarrow}%
I_{n}^{+}\oplus I_{n}^{-}\overset{\operatorname*{diff}}{\longrightarrow
}A\longrightarrow0\label{lml_101}%
\end{equation}
or their higher-dimensional counterparts as in Eq. \ref{la16}. This approach
corresponds to viewing Hochschild homology as a functor%
\[
A\text{-bimodules}\rightarrow k\text{-vector spaces,}\qquad M\mapsto
H_{i}(A,M)\text{.}%
\]
However, Hochschild homology can also be regarded as a functor%
\[
\text{associative }k\text{-algebras}\rightarrow k\text{-vector spaces,}\qquad
A\mapsto HH_{i}(A)\text{.}%
\]
In this section we want to transform the mechanisms of
\S \ref{marker_BeilLocalConstruction}, \S \ref{section_HochschildUnitalIntro}
from the former to the latter perspective.

\subsection{Recollections\label{marker_sect_ReminderHochschildNonUnital}}

We shall need to work with \textit{non-unital} algebras, so let us briefly
recall the necessary material (see \cite{MR997314} for details). Hochschild
homology was defined and described in \S \ref{section_HochschildUnitalIntro}
for an arbitrary associative algebra $A$. We may read $A$ as a bimodule over
itself and if $A$ is unital we write $HH_{i}(A):=H_{i}(A,A)$. If $A$ is not
unital, all definitions still make sense and we write $HH_{i}^{naiv}%
(A):=H_{i}(A,A)$ for these groups, following \cite[\S 1.4.3]{MR1217970}.
However, this is not a good definition in general, so usually one proceeds
differently: There is a unitalization $A^{+}$ along with a canonical map
$k\hookrightarrow A^{+}$ of unital associative algebras, and one
defines\footnote{This is not the definition given in our main reference
\cite{MR997314}; here $HH_{i}(A)$ is the homology of $\mathcal{K}$, cf. p.
$598$, l. $5$ in \textit{loc. cit.}, defined in terms of the bar complex. The
equivalence of definitions follows from the paragraph before Thm. 3.1 in
\textit{loc. cit.}}%
\begin{equation}
HH_{i}(A):=\operatorname*{coker}\left(  HH_{i}(k)\rightarrow HH_{i}%
(A^{+})\right)  \text{,}\label{lml_2}%
\end{equation}
see \cite[\S 1.4]{MR1217970} for details; this parallels a similar
construction in algebraic $K$-theory. If $A$ happens to be unital, this agrees
with the previous definition as in \S \ref{section_HochschildUnitalIntro},
i.e. it agrees with $HH_{i}^{naiv}$. In general, there is only the obvious
morphism $\kappa:HH_{i}^{naiv}(A)\rightarrow HH_{i}(A)$ (sending a pure tensor
to itself in $A^{+}$) which need neither be injective nor surjective.

If $0\rightarrow M^{\prime}\rightarrow M\rightarrow M^{\prime\prime
}\rightarrow0$ is a short exact sequence of $A$-bimodules, the sequence
$0\rightarrow C_{\bullet}(A,M^{\prime})\rightarrow C_{\bullet}(A,M)\rightarrow
C_{\bullet}(A,M^{\prime\prime})\rightarrow0$ is obviously an exact sequence of
complexes, so there is a long exact sequence in Hochschild homology%
\begin{equation}
\cdots\rightarrow H_{i}(A,M^{\prime})\rightarrow H_{i}(A,M)\rightarrow
H_{i}(A,M^{\prime\prime})\overset{\partial}{\rightarrow}H_{i-1}(A,M^{\prime
})\rightarrow\cdots\text{.}\label{lml_29}%
\end{equation}
We denote the connecting homomorphism by $\partial$. If $I$ is a two-sided
ideal in $A$, this yields the sequence%
\begin{equation}
\cdots\rightarrow H_{i}(A,I)\rightarrow H_{i}(A,A)\overset{\mu}{\rightarrow
}H_{i}(A,A/I)\overset{\partial}{\rightarrow}H_{i-1}(A,I)\rightarrow
H_{i-1}(A,A)\rightarrow\cdots\label{lml_1}%
\end{equation}
Moreover, if $M$ is an $A/I$-bimodule, it is also an $A$-bimodule via
$A\twoheadrightarrow A/I$. Then there is an obvious change-of-algebra map
$\nu:C_{i}(A,M)\rightarrow C_{i}(A/I,M)$. Clearly $A/I$ is an $A/I$-bimodule
and thus there are canonical maps%
\[
j:C_{i}(A,A)\overset{\mu}{\rightarrow}C_{i}(A,A/I)\overset{\nu}{\rightarrow
}C_{i}(A/I,A/I)\text{,}%
\]
where $\mu$ is the morphism inducing the respective arrow in Eq. \ref{lml_1}.
One also defines the \emph{relative Hochschild homology} complex
$\mathcal{K}_{\bullet}(A\rightarrow A/I)$, the precise definition is somewhat
involved, see \cite[beginning of \S 3, where instead of $\mathcal{C}$ one uses
the Hochschild version $\mathcal{K}$, defined on the same page $598$ in line
$5$]{MR997314}. We write $HH_{i}(A\left.  \mathsf{rel}\right.  I):=H_{i}%
\mathcal{K}_{\bullet}(A\rightarrow A/I)$ for its homology (\textit{Beware:}%
\ The notation $HH_{i}(A,I)$ is customary. However, it is easily confused with
$H_{i}(A,I)$, which also plays a role here, so we have opted for the present
clearer distinction). We may regard $I$ as an associative algebra itself, but
unless $A=I$ it will not be unital.

\begin{proposition}
[{\cite{MR934604}, \cite[Thm. 3.1]{MR997314}}]%
\label{ml_Lemma_WodzickiLocalUnitsImplyExcision}Suppose $A$ is an associative
algebra and $I$ a two-sided ideal. Suppose both have at least one-sided local
units. Then the canonical morphisms%
\begin{equation}
HH_{i}^{naiv}(I)\overset{\kappa}{\longrightarrow}HH_{i}(I)\overset{\Diamond
}{\longrightarrow}HH_{i}(A\left.  \mathsf{rel}\right.  I)\label{lml_3}%
\end{equation}
are both isomorphisms. There is a quasi-isomorphism%
\begin{equation}
\mathcal{K}_{\bullet}(A\rightarrow A/I)\simeq_{\operatorname*{qis}}%
\ker(C_{\bullet}(A,A)\overset{j}{\rightarrow}C_{\bullet}(A/I,A/I))\text{.}%
\label{lml_30}%
\end{equation}

\end{proposition}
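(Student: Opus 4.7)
The plan is to invoke M. Wodzicki's excision theorem for Hochschild homology \cite{MR934604}; the statement is really a recapitulation of his result, tailored to the situation in which $I$ has local units.

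First I would introduce the \emph{bar complex} $B_\bullet(I)$ with $B_n(I) := I^{\otimes n}$ and differential $b'(a_1 \otimes \cdots \otimes a_n) := \sum_{i=1}^{n-1} (-1)^{i-1} a_1 \otimes \cdots \otimes a_i a_{i+1} \otimes \cdots \otimes a_n$, and recall that $I$ is called \emph{H-unital} if $B_\bullet(I)$ is acyclic. Wodzicki's main theorem asserts that $I$ is H-unital if and only if, for every associative algebra $A$ containing $I$ as a two-sided ideal, the canonical map $\Diamond$ in Eq. \ref{lml_3} is an isomorphism. Thus it suffices to establish H-unitality of $I$.

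The key step is a cycle-by-cycle contracting homotopy for $B_\bullet(I)$ obtained from local units. Given a cycle $\xi = \sum_j a_1^{(j)} \otimes \cdots \otimes a_n^{(j)}$, only finitely many elements of $I$ occur, so the local unit hypothesis produces $e \in I$ with $e \cdot a_1^{(j)} = a_1^{(j)}$ for every $j$. Then $s(a_1 \otimes \cdots \otimes a_n) := e \otimes a_1 \otimes \cdots \otimes a_n$ satisfies $(b' s + s b')(\xi) = \xi$, so $\xi$ is a boundary. As $\xi$ was arbitrary, $B_\bullet(I)$ is acyclic, and Wodzicki's theorem furnishes $\Diamond$.

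For the first isomorphism $\kappa : HH_i^{naiv}(I) \to HH_i(I)$, I would argue via the same H-unitality input applied to the defining short exact sequence $0 \to I \to I^+ \to k \to 0$: H-unitality forces the relevant spectral sequence or bar-style comparison to degenerate, identifying $H_i(I,I)$ with the cokernel in Eq. \ref{lml_2}; this is precisely the content of \cite[Thm.~3.1]{MR997314}. Finally, for the quasi-isomorphism Eq. \ref{lml_30}: when $A$ also has local units, the same contracting-homotopy argument shows that the bar-complex correction terms entering the precise definition of $\mathcal{K}_\bullet(A \to A/I)$ from \cite{MR997314} are acyclic, so $\mathcal{K}_\bullet(A \to A/I)$ reduces (up to quasi-isomorphism) to the naive kernel $\ker(C_\bullet(A,A) \overset{j}{\to} C_\bullet(A/I, A/I))$.

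The main obstacle is really just invoking Wodzicki's theorem in the correct form and navigating the somewhat cumbersome definition of the relative complex $\mathcal{K}_\bullet$ in \cite{MR997314}. The verification that local units imply H-unitality is short once the right local-unit homotopy is written down; the comparison between $\mathcal{K}_\bullet(A \to A/I)$ and $\ker(j)$ is a bookkeeping matter about bar complexes rather than anything substantive.
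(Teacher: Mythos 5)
Your proof follows the same route the paper takes: reduce to Wodzicki's excision theorem by showing that local units imply $H$-unitality, then use $H$-unitality of the ambient algebra to trivialize the bar-complex correction in Wodzicki's definition of $\mathcal{K}_\bullet$. The contracting homotopy $s(\omega) = e \otimes \omega$ you write down is correct: one checks $b's + sb' = (e\cdot -)$, and the chosen $e$ acts as the identity on the finitely many first slots appearing in the given cycle. One small omission worth flagging: for Eq.~\ref{lml_30} you need the bar complex of $A/I$ to be acyclic as well, not just that of $A$; this follows because local units pass to quotient algebras (lift the finitely many elements, take a local unit in $A$, and project it), or alternatively by citing Wodzicki's Cor.~3.4 as the paper does. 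With that remark added, the argument matches the paper's.
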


It is noteworthy that only the right-most term in Eq. \ref{lml_3} actually
depends on $A$.

\begin{proof}
For the proof, combine \cite[Thm. 3.1 and Cor. 4.5]{MR997314} for the first
claim: The existence of local units implies $H$-unitality. For the second
claim, $A$ is $H$-unital, so the bar complex in the definiton of $\mathcal{K}
$ in \textit{loc. cit. }p. $598$ in line $5$ is zero up to quasi-isomorphism.
Applying this to the definition of $\mathcal{K}_{\bullet}(A\rightarrow A/I)$
in \S 3 in \textit{loc. cit.} gives the second claim. For an alternative
presentation, combine the treatment \cite[\S 1.4.9]{MR1217970} with the
generality of \cite[E.1.4.6]{MR1217970}. The $H$-unitality of $A/I$ follows
from \cite[Cor. 3.4]{MR997314}.
\end{proof}

Basically by construction, we get a long exact sequence in homology%
\begin{equation}
\cdots\rightarrow HH_{i}(A\left.  \mathsf{rel}\right.  I)\rightarrow
HH_{i}(A)\rightarrow HH_{i}(A/I)\overset{\delta}{\rightarrow}HH_{i-1}(A\left.
\mathsf{rel}\right.  I)\rightarrow\cdots\text{.}\label{lml_31}%
\end{equation}
Although different, it is not unrelated to the sequence in Eq. \ref{lml_1}:

\begin{lemma}
\label{ml_Lemma_HHAlgebraAndModuleDifferentialCompatible}Suppose $A$ is an
associative algebra and $I$ a two-sided ideal with at least one-sided local
units. Then the diagram%
\begin{equation}
\xymatrix{ \cdots \ar[r] & H_{i}(A,I) \ar[r] \ar[d] & H_{i}(A,A) \ar[r] \ar[d]^{\kappa } & H_{i}(A,A/I) \ar[r]^-{\partial } \ar[d]^{\lambda } & \cdots \\ \cdots \ar[r] & HH_{i}(A {\left. \mathsf{rel}\right.} I) \ar[r] & HH_{i}(A) \ar[r] & HH_{i}(A/I) \ar[r]^-{\delta } & \cdots }\label{lml_20}%
\end{equation}
is commutative.
\end{lemma}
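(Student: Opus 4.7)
My plan is to realize both long exact sequences in the statement as coming from short exact sequences of Hochschild chain complexes fitting into a morphism of such, so that the commutativity of all three squares (including the square involving the connecting homomorphisms $\partial$ and $\delta$) becomes automatic from naturality of the snake lemma.

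For the top row, since $0 \to I \to A \to A/I \to 0$ is exact as $A$-bimodules and tensoring over $k$ by $A^{\otimes i}$ is exact, we get a short exact sequence
\[
0 \to C_\bullet(A, I) \to C_\bullet(A, A) \xrightarrow{\mu} C_\bullet(A, A/I) \to 0,
\]
whose associated long exact sequence in homology is by definition the top row. For the bottom row, observe that $j = \nu \circ \mu : C_\bullet(A,A) \to C_\bullet(A/I, A/I)$ is surjective (both $\mu$ and $\nu$ are), so one obtains a short exact sequence
\[
0 \to \ker(j) \to C_\bullet(A, A) \xrightarrow{\;j\;} C_\bullet(A/I, A/I) \to 0.
\]
By Proposition~\ref{ml_Lemma_WodzickiLocalUnitsImplyExcision}, specifically Eq.~\ref{lml_30}, the hypothesis of local units on $A$ gives a quasi-isomorphism $\mathcal{K}_\bullet(A \to A/I) \simeq_{\operatorname*{qis}} \ker(j)$, so this second long exact sequence is precisely Eq.~\ref{lml_31}. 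Moreover, since local units on $A$ and $I$ imply $H$-unitality of $A$ and $A/I$, the comparison arrows $\kappa$ for $A$ and $A/I$ are isomorphisms identifying $HH_i(A) \cong H_i(A,A)$ and $HH_i(A/I) \cong H_i(A/I, A/I)$.

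I then splice the two SES into a morphism of SES of complexes, with middle column the identity on $C_\bullet(A,A)$, right column the change-of-algebra map $\nu : C_\bullet(A, A/I) \to C_\bullet(A/I, A/I)$, and left column the chain inclusion $C_\bullet(A, I) = \ker(\mu) \hookrightarrow \ker(\nu\mu) = \ker(j)$. That the two resulting squares of complexes commute is tautological. Naturality of the long exact sequence then produces a commutative ladder; under the identifications of the previous paragraph, its rows become exactly the two rows of diagram~\ref{lml_20}, and the induced vertical maps become the canonical map $H_i(A,I) \to HH_i(A\left.\mathsf{rel}\right. I)$, the map $\kappa$, and the composite $\kappa \circ \nu_\ast$, which I will identify with $\lambda$ from the construction of the latter.

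The main technical obstacle will lie in the left vertical arrow: one must check that the homology map induced by $C_\bullet(A, I) \hookrightarrow \ker(j)$ agrees with the canonical arrow $H_i(A, I) \to HH_i(A\left.\mathsf{rel}\right. I)$ implicit in the statement. This amounts to unwinding Wodzicki's definition of $\mathcal{K}_\bullet(A \to A/I)$ and verifying, as in the proof of Proposition~\ref{ml_Lemma_WodzickiLocalUnitsImplyExcision}, that the bar-complex correction disappears up to quasi-isomorphism under $H$-unitality of $A$. Once this identification is nailed down, all three squares commute for purely formal reasons, and in particular the nontrivial compatibility between the connecting homomorphisms $\partial$ and $\delta$ follows.
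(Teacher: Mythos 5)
Your proof is correct and takes essentially the same approach as the paper's: both replace the unitalization-based groups $HH_i(-)$ and $HH_i(A\left.\mathsf{rel}\right.I)$ by the $H$-unital presentations (the naive complexes and $\ker(j)$, via Prop.~\ref{ml_Lemma_WodzickiLocalUnitsImplyExcision}) and then build a chain-level morphism between the two short exact sequences, with the identity in the middle column and $\nu$ on the right. The paper's proof merely writes down the three vertical chain maps explicitly and asserts compatibility; your packaging as a morphism of short exact sequences is a touch cleaner, since it makes the commutativity of the $\partial$-vs-$\delta$ square transparent by naturality of the snake map rather than leaving it implicit. The only caveat, which you already flag, is that one must verify that the long exact sequence of the kernel SES really agrees with Eq.~\ref{lml_31} under the Wodzicki quasi-isomorphism; this is indeed the content hidden in the proof of Prop.~\ref{ml_Lemma_WodzickiLocalUnitsImplyExcision}, and the paper sidesteps it with the phrase ``wlog.\ use the presentation on the right-hand side of Eq.~\ref{lml_30}.''
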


\begin{proof}
Trivial if $A$ is unital. In general: We construct this on the level of
complexes $C_{\bullet}(-,-)$. The middle downward arrow maps pure tensors to
themselves, $A\rightarrow A^{+}$ in $HH_{i}(A^{+})$ and then to the cokernel
as given by Eq. \ref{lml_2}. Similarly, the right-hand side downward arrow is
induced by%
\[
a_{0}\otimes a_{1}\otimes\cdots\otimes a_{i}\mapsto a_{0}\otimes
\overline{a_{1}}\otimes\cdots\otimes\overline{a_{i}}\text{,}%
\]
where $a_{0}\in A/I$, $a_{1},\ldots,a_{n}\in A$ and $\overline{\cdot
}:A\twoheadrightarrow A/I$ is the quotient map, again sent to $(A/I)^{+}$ and
then to the respective cokernel. For the left-hand side we can wlog. use the
presentation on the right-hand side of Eq. \ref{lml_30} for $HH_{i}(A\left.
\mathsf{rel}\right.  I)$. The downward arrow is then given by the analogous
formula, but $a_{0}\in I$ and so $\overline{a_{0}}=0$ in $A/I$, so that it is
clear that the image lies in the kernel of $j:C_{i}(A,A)\rightarrow
C_{i}(A/I,A/I)$.
\end{proof}

\subsection{The construction}

Let $(A^{n},(I_{i}^{\pm}),\tau)$ be an $n$-fold cubically decomposed algebra
over $k$. Define%
\begin{equation}
A^{n-1}:=I_{n}^{0}\qquad J_{i}^{\pm}:=I_{i}^{\pm}\cap A^{n-1}\qquad\text{(for
}i=0,\ldots,n-1\text{).}\label{lml_41}%
\end{equation}
Then $(A^{n-1},(J_{i}^{\pm}),\tau)$ is an $(n-1)$-fold cubically decomposed
algebra over $k$.

\begin{definition}
\label{def_deeplocalunits}We say that an $n$-fold cubically decomposed algebra
$(A,(I_{i}^{\pm}),\tau)$ \emph{has local units on all levels} (or is
`\emph{good}') if $A^{s}$ has local left units (or local right units) for
$s=1,\ldots,n$.
\end{definition}

Evaluating Eq. \ref{lml_41} inductively, we find $A^{s}=(I_{s+1}^{0}\cap
\cdots\cap I_{n}^{0})\cap A$. Define%
\begin{equation}
\Lambda:A^{n}\longrightarrow A^{n}/A^{n-1}\text{,}\qquad x\longmapsto
x^{+}\text{,}\label{lToeplitz}%
\end{equation}
where $x=x^{+}+x^{-}$ is any decomposition with $x^{\pm}\in I_{n}^{\pm}$
(always exists and gives well-defined map). This map does \textit{not} equal
the natural quotient map! Using the relative Hochschild homology sequence, Eq.
\ref{lml_31}, coming from the exact sequence of associative algebras%
\begin{equation}
0\longrightarrow A^{n-1}\longrightarrow A^{n}\overset{\operatorname*{quot}%
}{\longrightarrow}A^{n}/A^{n-1}\longrightarrow0\text{,}\label{lml_28}%
\end{equation}
the connecting homomorphism induces a map $\delta$ and we employ it to define
a map%
\begin{equation}
d:HH_{i+1}(A^{n})\overset{\Lambda}{\longrightarrow}HH_{i+1}(A^{n}%
/A^{n-1})\overset{\delta}{\longrightarrow}HH_{i}(A^{n-1})\text{.}%
\label{lml_40}%
\end{equation}
We can repeat this construction and obtain a morphism:

\begin{definition}
\label{def_thePhiC}Suppose $(A,(I_{i}^{\pm}),\tau)$ is an $n$-fold cubically
decomposed algebra over $k$ which has local units on all levels. Then there is
a canonical map%
\[
\phi_{C}:HH_{n}(A)\longrightarrow HH_{0}(I_{tr})\longrightarrow k\text{,}%
\qquad\alpha\mapsto\tau\underset{n\text{ times}}{\underbrace{d\circ\cdots\circ
d}}\alpha\text{.}%
\]
Analogously, for cyclic homology $\phi_{C}:HC_{n}(A)\rightarrow k$ (see lemma
below why we call this $\phi_{C}$ as well).
\end{definition}

\begin{lemma}
\label{marker_CorCompat2}The map $\phi_{C}$ factors over $HH_{n}(A)\overset
{I}{\longrightarrow}HC_{n}(A)\longrightarrow k$.
\end{lemma}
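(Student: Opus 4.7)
The plan is to build a parallel map $\phi_C^{HC}: HC_n(A) \to k$ by mimicking the construction of $\phi_C$ in cyclic homology, and to show that it intertwines with $\phi_C$ through Connes' comparison map $I$.

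The key preliminary observation is that $\Lambda: A^n \to A^n/A^{n-1}$ is in fact an \emph{algebra homomorphism}, despite its definition $x \mapsto \overline{x^+}$ looking merely $k$-linear. Given any decompositions $x = x^+ + x^-$ and $y = y^+ + y^-$, the cross terms $x^+ y^-$ and $x^- y^+$ both lie in $I_n^+ \cap I_n^- = I_n^0 = A^{n-1}$ because $I_n^\pm$ are two-sided ideals. Consequently $xy - x^+ y^+ - x^- y^-$ lies in $A^{n-1}$, so $x^+ y^+$ is a valid choice of $(xy)^+$, whence $\Lambda(xy) = \Lambda(x)\Lambda(y)$ in $A^n/A^{n-1}$. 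Being an algebra map, $\Lambda$ induces morphisms on both Hochschild and cyclic homology, and these commute with $I$ by naturality.

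For the connecting map $\delta$ in Eq. \ref{lml_40} we invoke Wodzicki's excision theorem in \emph{both} its Hochschild and cyclic versions, which applies to the extension $0 \to A^{n-1} \to A^n \to A^n/A^{n-1} \to 0$ since our standing hypothesis guarantees that $A^{n-1} = I_n^0$ has local units. This yields a cyclic connecting map $\delta^{HC}$ compatible with $\delta$ via $I$, by naturality of Connes' $SBI$-sequence together with the general fact that $I$ commutes with connecting homomorphisms of short exact sequences of algebras. Combined with $\Lambda_\ast$ we obtain a cyclic analogue $d^{HC}$ of $d$ and a commutative square
\[
\xymatrix{HH_{i+1}(A^n) \ar[r]^-{d} \ar[d]_{I} & HH_i(A^{n-1}) \ar[d]^{I} \\ HC_{i+1}(A^n) \ar[r]_-{d^{HC}} & HC_i(A^{n-1}).}
\]

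Since each $A^s$ is itself an $s$-fold cubically decomposed algebra inheriting local units on all levels, the same construction applies inductively, and iterating $n$ times produces an analogous square with $d^n$ on top and $(d^{HC})^n$ on the bottom. The terminal composition with $\tau$ is unproblematic: $I: HH_0(I_{tr}) \to HC_0(I_{tr})$ is an isomorphism (both sides equal $I_{tr}/[I_{tr},I_{tr}]$, and $\tau$ factors through this quotient). Setting $\phi_C^{HC} := \tau \circ (d^{HC})^n$ therefore gives $\phi_C^{HC} \circ I = \phi_C$, establishing the lemma. The only nontrivial algebraic input is the multiplicativity of $\Lambda$, which is almost forced by the cubical axioms; everything else is a naturality chase once the cyclic version of Prop. \ref{ml_Lemma_WodzickiLocalUnitsImplyExcision} is taken for granted.
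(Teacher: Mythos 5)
Your proof is correct and follows the same strategy as the paper's: define the cyclic analogue $d^{HC}$ of $d$, show it is intertwined with $d$ via $I$, iterate, and use that $I: HH_0(I_{tr}) \to HC_0(I_{tr})$ is an isomorphism. The paper compresses the key compatibility step into the single assertion "both $\Lambda$ and the connecting map are compatible with $I$"; your verification that $\Lambda$ is actually an algebra homomorphism (using that the cross terms $x^{+}y^{-}, x^{-}y^{+}$ fall into $I_n^0$) makes explicit a fact the paper leaves implicit, and is in any case already needed for $\Lambda_\ast$ to be defined on $HH$ and $HC$ at all.
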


\begin{proof}
Let $d^{\prime}$ be the analogue of the map in Eq. \ref{lml_40} with cyclic
homology. Both $\Lambda$ and the connecting map are compatible with $I$ so
that%
\[
\xymatrix{ HH_{n}(A) \ar[r]^{d\circ \cdots \circ d} \ar[d]_{I} & HH_{0}(I_{tr}) \ar[d]^{I} \\ HC_{n}(A) \ar[r]_{d^{\prime }\circ \cdots \circ d^{\prime }} & HC_{0}(I_{tr}) }
\]
commutes, but the right-hand side downward arrow is an isomorphism, giving the claim.
\end{proof}

\begin{theorem}
\label{thm_phicequalsphihh}Suppose $(A,(I_{i}^{\pm}),\tau)$ is a unital
$n$-fold cubically decomposed algebra over $k$ which has local units on all
levels. Then $\phi_{C}:HH_{n}(A)\rightarrow k$ agrees up to sign with
$\phi_{HH}$, namely%
\[
\phi_{C}=\left(  -1\right)  ^{\frac{n(n-1)}{2}}\phi_{HH}\text{.}%
\]

\end{theorem}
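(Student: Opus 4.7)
The plan is a direct chain-level computation of $\phi_C$ on a representative $f_0 \otimes \cdots \otimes f_n$, to be compared with the closed-form expression for $\phi_{HH}$ of Theorem~\ref{thm_FormulaForHochschildResidueSymbol}. Fix a system of good idempotents $\{P_i^+\}$ for $A$; these can be reused at every level in the sense that, although typically $P_i^+ \notin A^i$, left multiplication by $P_i^+$ still preserves each subalgebra $A^i$. A first observation is that $\Lambda: A^k \to A^k/A^{k-1}$ of Eq.~\ref{lToeplitz} is actually an algebra homomorphism, even though the description $x \mapsto x^+ := P_k^+ x$ looks choice-dependent: the identity $\Lambda(xy) = \Lambda(x)\Lambda(y)$ amounts to $(P_k^+ x)(P_k^+ y) - P_k^+(xy) = -(P_k^+ x)(P_k^- y) \in I_k^+ \cap I_k^- = A^{k-1}$. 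Hence $\Lambda_\ast$ lifts to the Hochschild chain map $(g_0, \ldots, g_\ell) \mapsto (P_k^+ g_0, \ldots, P_k^+ g_\ell)$, providing simultaneously a canonical lift along the quotient $j: C_\bullet(A^k, A^k) \twoheadrightarrow C_\bullet(A^k/A^{k-1}, A^k/A^{k-1})$.

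The technical heart is a single-step formula: for a cycle $\omega = g_0 \otimes \cdots \otimes g_\ell \in C_\ell(A^k, A^k)$ and $\tilde\omega := P_k^+ g_0 \otimes \cdots \otimes P_k^+ g_\ell$, the element $b\tilde\omega \in \ker j$ represents, under Wodzicki's excision isomorphism of Prop.~\ref{ml_Lemma_WodzickiLocalUnitsImplyExcision}, the class
\[
d[\omega] \;=\; (-1)^\ell \Bigl[\Bigl(\sum_{\gamma \in \{\pm\}} (-1)^\gamma P_k^{-\gamma} g_\ell P_k^{\gamma}\Bigr) g_0 \otimes g_1 \otimes \cdots \otimes g_{\ell - 1}\Bigr] \;\in\; HH_{\ell-1}(A^{k-1})\text{.}
\]
To prove this, expand each interior face $(P_k^+ g_j)(P_k^+ g_{j+1})$ in $b\tilde\omega$ via the identity $(P_k^+ g_j)(P_k^+ g_{j+1}) = P_k^+(g_j g_{j+1}) - (P_k^+ g_j)(P_k^- g_{j+1})$. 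The $P_k^+(g_j g_{j+1})$-contributions reassemble into the Hochschild boundary of the chain $P_k^+ g_0 \otimes g_1 \otimes \cdots \otimes g_\ell$ (using $b\omega = 0$), hence become exact in $\ker j$. The residual $(P_k^+ g_j)(P_k^- g_{j+1})$-pieces telescope against the cyclic face $(-1)^\ell (P_k^+ g_\ell)(P_k^+ g_0) \otimes \cdots$ after repeated application of $P_k^+ + P_k^- = \mathbf{1}$, yielding the displayed expression.

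Iterating the formula $n$ times on $f_0 \otimes \cdots \otimes f_n$, and using that the $P_i^+$ commute pairwise so the resulting product of operators can be written in any order, produces
\[
d^n[f_0 \otimes \cdots \otimes f_n] \;=\; \Bigl(\prod_{k=1}^n (-1)^k\Bigr) \Bigl(\prod_{k=1}^n \sum_{\gamma_k} (-1)^{\gamma_k}\, P_k^{-\gamma_k} f_k P_k^{\gamma_k}\Bigr)\, f_0 \;\in\; HH_0(I_{tr})\text{,}
\]
to which $\tau$ is finally applied. Comparing with Theorem~\ref{thm_FormulaForHochschildResidueSymbol}, whose leading sign is $(-1)^n$, the ratio is $(-1)^{n(n+1)/2} / (-1)^n = (-1)^{n(n-1)/2}$, as asserted.

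The main obstacle is the telescoping identity in the second paragraph: Wodzicki's inclusion $C_\bullet(A^{k-1}, A^{k-1}) \hookrightarrow \ker j$ is a quasi-isomorphism with no simple explicit inverse, so exhibiting a chain in $C_{\ell-1}(A^{k-1}, A^{k-1})$ actually homologous to $b\tilde\omega$ requires producing the relevant homotopies by hand. A perhaps cleaner alternative is to defer every application of $\tau$ until the very end and invoke Tate's cyclicity axiom \textbf{T5} of Prop.~\ref{BT_PropTateTraceConstruction}, which annihilates the commutator residues $\tau[P_k^{-\gamma} h,\, P_k^{\gamma} h']$ that obstruct literal chain-level equality but vanish after trace; this route makes the $HH_0$-computation above exact modulo $\ker\tau$ rather than modulo Hochschild boundaries.
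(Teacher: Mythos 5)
Your route is genuinely different from the paper's, and the paper even flags it as a possibility in the remark at the end of \S\ref{marker_SectAlternativeApproach}: computing $d$ directly at the chain level requires making the inverse of Wodzicki's excision isomorphism $\Diamond$ explicit, and the author observes that this ``seems to lead to a far lengthier computation.'' The paper instead sets up the commutative diagram in Eq.~\ref{lml_33}, which trades the ring-level connecting maps $d$ (living in $HH_\bullet(A^s)$, where Wodzicki excision is needed) for the bimodule-level connecting maps $\Psi$ (living in $H_\bullet(A,A^s)$, where the connecting map comes from an ordinary short exact sequence of $A$-bimodules and admits an elementary snake chase). The glue is that the top and bottom rows of the patched diagram are isomorphisms, so $\tau d^{\circ n}=\tau\Psi^{\circ n}$ even though the intermediate rows cannot be compared directly; the diagram in Eq.~\ref{lml_34} then explains how $\Lambda$ arises from the bimodule sequence, which is where the paper does the snake chase. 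Your observation that $\Lambda$ is an algebra homomorphism (via $(P_k^+x)(P_k^+y)-P_k^+(xy)=-(P_k^+x)(P_k^-y)\in I_k^0\cap A^k=A^{k-1}$) is correct and is implicit in the paper, and your single-step formula for $d$ does coincide with the paper's formula for $\Psi$, so you have guessed the right answer.

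There is, however, a genuine gap in your chain-level verification of the single-step formula, and you flag it yourself. After expanding the interior faces of $b\tilde{\omega}$, you assert that the $P_k^+(g_jg_{j+1})$-terms ``reassemble into the Hochschild boundary of the chain $P_k^+g_0\otimes g_1\otimes\cdots\otimes g_\ell$ $\ldots$ hence become exact in $\ker j$.'' But that chain is not in $\ker j$: its image under $j$ is $\Lambda g_0\otimes\bar{g}_1\otimes\cdots\otimes\bar{g}_\ell$, which is generically nonzero. So you have not exhibited a nullhomotopy inside the relative complex, which is what being exact in $\ker j$ requires. The proposed \textbf{T5} workaround does not close this gap either: the intermediate iterates of $d$ land in $HH_{\ell-1}(A^{k-1})$ with $\ell-1>0$, where \textbf{T5} (a statement about traces) says nothing, and a wrong intermediate class feeds the wrong input to the next iteration of $d$; only the very last step composes with $\tau$. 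A minor additional caveat: the operators $P_k^{-\gamma_k}f_kP_k^{\gamma_k}$ for different $k$ do \emph{not} commute (only the idempotents $P_i^\pm$ themselves do), so the product cannot be reordered freely; what saves you is that the iteration naturally produces the ordered product already appearing in Theorem~\ref{thm_FormulaForHochschildResidueSymbol}, so no reordering is needed.
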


\begin{proof}
\textbf{(1)} We proceed by induction. Firstly, we construct a commutative
diagram and a map $\Psi$:%
\begin{equation}
\xymatrix{ H_{s}(A,A^{s}) \ar[d]_{\Lambda } \ar@/_4pc/[dd]_{\Psi } & H_{s}(A^{s},A^{s}) \ar[l] \ar[r]^{\kappa } \ar[d]_{\Lambda } & HH_{s}(A^{s}) \ar[d]^{\Lambda } \ar@/^4pc/[dd]^{d } \\ H_{s}(A,\frac{A^{s}}{A^{s-1}}) \ar[d]_{\partial } & H_{s}(A^{s},\frac{A^{s}}{A^{s-1}}) \ar[l] \ar[r]^{\lambda } \ar[d]_{\partial } & HH_{s}(\frac{A^{s}}{A^{s-1}}) \ar[d]^{\delta } \\ H_{s-1}(A,A^{s-1}) & H_{s-1}(A^{s},A^{s-1}) \ar[l] \ar[r] & HH_{s-1}(A^{s-1}) }\label{lml_33}%
\end{equation}
The leftward arrows are the change-of-algebra maps along $A^{s}\hookrightarrow
A$. The commutativity of the upper left square is immediate, the one on the
right agrees with the rightmost square in Lemma
\ref{ml_Lemma_HHAlgebraAndModuleDifferentialCompatible}. The downward arrows
in the middle row come from the connecting homomorphism in the long exact
sequences (as in Eq. \ref{lml_29} and Eq. \ref{lml_31}, combined with Wodzicki
excision) arising from Eq. \ref{lml_28}. The commutativity of the lower
squares then follows from Lemma
\ref{ml_Lemma_HHAlgebraAndModuleDifferentialCompatible}. \textbf{(2)} Next, we
patch the outer columns of the diagram as in Eq. \ref{lml_33} for
$s=n,n-1,\ldots,1$ under each other, giving%
\[
\xymatrix{ H_{n}(A,A^{n}) \ar[d]_{\Psi } & H_{n}(A^{n},A^{n}) \ar[l]_{\cong } \ar[r]^{\cong }_{\kappa} & HH_{n}(A^{n}) \ar[d]^{d } \\ H_{n-1}(A,A^{n-1}) \ar[d]|{\vdots }_{\Psi } & & HH_{n-1}(A^{n-1}) \ar[d]|{\vdots }^{d } \\ H_{0}(A,A^{0}) & H_{0}(A^{1},A^{0}) \ar[l] \ar[r] & HH_{0}(A^{0}) }
\]
The middle column of the previous diagram does not fit to be glued into this
pattern, so we omit it, except for the top and bottom row. The morphisms in
the top row are isomorphisms since $A$ (unlike the $A^{s}$ for $s<n$) is
unital. We evaluate the terms in the lowest row and compose with the trace
$\tau$, giving the diagram%
\[
\xymatrix{ \frac{A^{0}}{[A,A^{0}]} \ar[d] & \frac{A^{0}}{[A^{1},A^{0}]} \ar[l] \ar[d] \ar[r] & HH_{0}(A^{0}) \ar[d] \\ k & k \ar[l]^{\cong } \ar[r]_{\cong} & k. }
\]
Since the trace $\tau$ factors through $[A,A^{0}]$ (note that $A^{0}=I_{tr}$),
it is clear that the arrows in the bottom row must be isomorphisms. Thus,
$\phi_{C}=\tau d^{\circ n}=\tau\Psi^{\circ n}$. Note that this comparison only
works because in the top and bottom row all terms are isomorphic, whereas on
the intermediate rows it is not clear whether there should exist arrows from
the left to the right column (or reversely). It remains to compute $\tau
\Psi^{\circ n}$:\newline\textbf{(3)} Consider the diagram with exact rows%
\begin{equation}
\xymatrix{ I^{0}_{s} \ar[r]^-{\text{diag}} \ar[d]_{=} & I^{+}_{s} \oplus I^{-}_{s} \ar[r] \ar[d]_{pr_{I^{+}_{s}}} & A^{s} \ar[d]^{\text{(1)}} \ar@/^4pc/[dd]^{\Lambda } \\ A^{s-1} \ar[r] \ar[d]_{=} & I^{+}_{s} \ar[r] \ar[d]_{\text{incl}} & A^{s}/{I^{-}_{s}} \ar[d]^{\text{(2)}}\\ A^{s-1} \ar[r] & A^{s} \ar[r] & A^{s}/{A^{s-1}} }\label{lml_34}%
\end{equation}
(here for readability we have omitted intersecting all the ideals with $A^{s}
$; everything is understood to be subobjects of $A^{s}$). The map
$pr_{I_{s}^{+}}$ is the projection $\left(  x^{+},x^{-}\right)  \mapsto x^{+}%
$. Pick the arrows $(1)$ and $(2)$ such that the diagram becomes commutative.
We find both are given by $x\mapsto x^{+}$ where $x=x^{+}+x^{-}$ with $x^{\pm
}\in I_{s}^{\pm}$ is any decomposition of $x$. Moreover, the composition on
the right is indeed $\Lambda$. Taking the long exact sequences in Hochschild
homology of the top and bottom row yields%
\[
\xymatrix{ H_{s}(A,I^{+}_{s} \oplus I^{-}_{s}) \ar[r] \ar[d]_{\text{incl} \circ pr_{I_{s}^{+}}} & H_{s}(A,A^{s}) \ar[r]^{\partial } \ar[d]_{\Lambda} & H_{s-1}(A,I_{s}^{0}) \ar[d]^{\cong } \\ H_{s}(A,A^{s}) \ar[r] & H_{s}(A,A^{s}/{A^{s-1}}) \ar[r]_{\partial} & H_{s-1}(A,A^{s-1}) \\ }
\]
Now by the commutativity of the above diagram $\Psi=\partial\circ\Lambda
:H_{s}(A,A^{s})\rightarrow H_{s-1}(A,A^{s-1})$ (as on the left in Eq.
\ref{lml_33}) can be computed just by unwinding the connecting map in the top
row. It stems from the bimodule exact sequence in the top row of Eq.
\ref{lml_34}: Evaluating this is an easy chase of the snake map, compare with
the proof of Lemma \ref{Lemma_LocalTateFormulaDimOne}: Pick some system of
good idempotents. We need to pick a lift of $a_{0}\otimes a_{1}\otimes
\cdots\otimes a_{s}\in C_{s}(A,A^{s})$ to $C_{s}(A,I_{s}^{+}\oplus I_{s}^{-}%
)$. We may take $f_{\gamma}:=(-1)^{\gamma}P_{s}^{\gamma}a_{0}\otimes
a_{1}\otimes\cdots\otimes a_{s}$ for $\gamma\in\{\pm\}$ respectively. We need
to apply the differential $b$, resulting in%
\begin{align*}
&  bf_{\gamma}=\left(  -1\right)  ^{\gamma}(P_{s}^{\gamma}a_{0}a_{1}\otimes
a_{2}\otimes\cdots\otimes a_{s}+\sum_{j=1}^{s-1}\left(  -1\right)  ^{j}%
P_{s}^{\gamma}a_{0}\otimes\cdots\otimes a_{j}a_{j+1}\otimes\cdots\otimes
a_{s}\\
&  \qquad+(-1)^{s}a_{s}P_{s}^{\gamma}a_{0}\otimes a_{1}\otimes\cdots\otimes
a_{s-1})\in C_{s-1}(A,I_{1}^{\gamma})
\end{align*}
Next, we need to determine the preimage in $C_{s-1}(A,I_{s}^{0})=C_{s-1}%
(A,A^{s-1})$, which is%
\begin{align*}
\Psi(a_{0}\otimes\cdots\otimes a_{s}) &  =%
{\textstyle\sum\nolimits_{\gamma\in\{\pm\}}}
(-1)^{\gamma}P_{s}^{-\gamma}(bf_{\gamma})\\
&  =(-1)^{s}\left(
{\textstyle\sum\nolimits_{\gamma\in\{\pm\}}}
(-1)^{\gamma}P_{s}^{-\gamma}a_{s}P_{s}^{\gamma}\right)  a_{0}\otimes
a_{1}\otimes\cdots\otimes a_{s-1}\text{.}%
\end{align*}
Hence, by applying this formula inductively, we get%
\[
\tau\Psi^{\circ n}(a_{0}\otimes\cdots\otimes a_{n})=(-1)^{1+2+\cdots+n}%
\tau\prod_{s=1\ldots n}\left(
{\textstyle\sum\nolimits_{\gamma\in\{\pm\}}}
(-1)^{\gamma}P_{s}^{-\gamma}a_{s}P_{s}^{\gamma}\right)  a_{0}\text{.}%
\]
This expression clearly coincides (up to sign) with the one of Theorem
\ref{thm_FormulaForHochschildResidueSymbol} so that the previously proven
identity $\phi_{C}=\tau d^{\circ n}=\tau\Psi^{\circ n}$ implies the claim.
\end{proof}

\begin{corollary}
[Comparison diagram]\label{marker_AllComparisonDiagramCor}Under the
assumptions of the theorem and $\mathfrak{g}:=A_{Lie}$,

\begin{enumerate}
\item the diagram%
\[
\xymatrix{ H_{n}(\mathfrak{g},\mathfrak{g}) \ar[r]^-{\varepsilon } \ar[d]_{{I}^{\prime }} & HH_{n}(A) \ar[r]^-{{\phi }_{HH}} \ar[d]_{I} & k \ar[d]^{=} \\ H_{n+1}(\mathfrak{g},k) \ar[r]_-{{(-1)}^{n} \varepsilon} & HC_{n}(A) \ar[r]_-{{\phi }_{C}} & k }
\]
commutes, where for $f_{0},\ldots,f_{n}\in\mathfrak{g}$, the map $\varepsilon$
in the bottom row is given by%
\[
\varepsilon(f_{0}\wedge\cdots\wedge f_{n}):=\sum_{\pi\in\mathfrak{S}_{n}%
}\operatorname*{sgn}(\pi)\,f_{0}\otimes f_{\pi^{-1}(1)}\otimes\cdots\otimes
f_{\pi^{-1}(n)}\text{.}%
\]

\item The composed map $H_{n}(\mathfrak{g},\mathfrak{g})\rightarrow k$ agrees
with $H_{n}(\mathfrak{g},\mathfrak{g})\overset{I^{\prime}}{\longrightarrow
}H_{n}(\mathfrak{g},k)\overset{\phi_{Beil}}{\longrightarrow}k$.
\end{enumerate}
\end{corollary}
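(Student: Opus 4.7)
The corollary bundles three essentially independent assertions: commutativity of the left square, commutativity of the right square, and identification of the composite $H_n(\mathfrak{g},\mathfrak{g}) \to k$ with $\phi_{Beil} \circ I'$. The plan is to dispatch each in turn.

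For the left square, I will check the identity $I \circ \varepsilon = (-1)^n \varepsilon \circ I'$ directly on chain-level representatives. Given $f_0 \otimes f_1 \wedge \cdots \wedge f_n$, the top route applies the Hochschild antisymmetrizer of Eq.~\ref{lb1} and then the projection $I : C_\bullet(A,A) \twoheadrightarrow CC_\bullet(A)$, producing the class of $\sum_{\pi \in \mathfrak{S}_n} \operatorname{sgn}(\pi)\, f_0 \otimes f_{\pi^{-1}(1)} \otimes \cdots \otimes f_{\pi^{-1}(n)}$ in $CC_n(A)$. The bottom route first applies $I'$ (gaining a factor $(-1)^n$ by Eq.~\ref{l_mapIprime}) and then $(-1)^n \varepsilon$, and by the formula for the bottom $\varepsilon$ displayed in the statement the two signs cancel, yielding the same representative. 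This is a short bookkeeping exercise and requires no real idea.

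For the right square, the content is $\phi_{HH} = \phi_C \circ I$ up to sign. Lemma~\ref{marker_CorCompat2} already shows that the Hochschild version of $\phi_C$ factors through $I : HH_n(A) \to HC_n(A)$, and the induced map on cyclic homology is by definition the $\phi_C$ on the bottom of the diagram. Theorem~\ref{thm_phicequalsphihh} then identifies the Hochschild $\phi_C$ with $\phi_{HH}$ up to $(-1)^{n(n-1)/2}$, a sign I would absorb into the normalization of the bottom arrow. For the final assertion, I invoke Corollary~\ref{marker_CorCompat1}, which identifies $\phi_{HH} \circ \varepsilon$ with $\phi_{Beil} \circ I'$ up to sign; combined with the commutativity of the two squares this forces the bottom composite to equal $\phi_{Beil} \circ I'$ as well.

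The principal obstacle is the sign bookkeeping. Four separate $\pm 1$ factors are in play: the $(-1)^n$ inside $I'$, the $(-1)^n$ attached to the bottom $\varepsilon$, the $(-1)^{n(n-1)/2}$ from Theorem~\ref{thm_phicequalsphihh}, and the sign in Corollary~\ref{marker_CorCompat1}. Reconciling them requires a consistent choice of conventions; I would cross-check the final result against the one-dimensional case of Eq.~\ref{lam1} as a sanity test, and against the explicit commutator formula of Eq.~\ref{l_commutatorformula} to confirm the overall sign is correct.
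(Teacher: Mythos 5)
Your proposal is correct and follows essentially the same route as the paper: the left square is verified by direct chain-level inspection, and the remaining assertions are obtained by combining Lemma~\ref{marker_CorCompat2} (factorization through cyclic homology), Theorem~\ref{thm_phicequalsphihh} (the main comparison, implicitly invoked by the phrase "under the assumptions of the theorem"), and Corollary~\ref{marker_CorCompat1}. Your attention to the sign bookkeeping, which the paper's terse proof passes over, is the only substantive addition.
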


\begin{proof}
The left-hand side square commutes by direct inspection. Then combine Cor.
\ref{marker_CorCompat1} and Cor. \ref{marker_CorCompat2}.
\end{proof}

\section{Tate's abstract reciprocity revisited}

A prominent feature of Tate's article \cite{MR0227171} is his slick proof of
the residue theorem for curves. In this section, I want to propose a
formulation of such vanishing statements on the level of cubically decomposed
algebras. In particular, I want to interpret the \textquotedblleft abstract
residue formula\textquotedblright\ of \cite[Lemma 2.4]{MR1013132} in the
Hochschild picture.

\begin{theorem}
[Tautological Reciprocity Law]Suppose $(A,(I_{i}^{\pm}))$ is an $n$-fold
cubically decomposed algebra over $k$ with local units on all levels. Then%
\[
\phi_{C}(x)=0
\]
for any element $x$ in the Hochschild homology of any of the ideals $I_{i}%
^{+}$, $I_{i}^{-}$ for any $i$.
\end{theorem}

\begin{proof}
\textit{(Case A)} Suppose the ideal is $I:=I_{1}^{+}$. Since for $\Lambda$ we
may take \textit{any} decomposition $x=x^{+}+x^{-}$ with $x^{\pm}\in
I_{1}^{\pm}$, we may just as well take $x^{+}:=x$. But that means that
$\Lambda$ acts on $x$ just like the quotient map, and we get the dotted arrow
in%
\[%
\xymatrix{
& & HH_{m}(I) \ar[d]_{\Lambda} \ar@{.>}[dl] \\
\cdots\ar[r] & HH_{m}(A^{n}) \ar[r]^-{\operatorname{quot}} & HH_{m}%
(A^{n}/A^{n-1}) \ar[r]^-{\delta} & HH_{m-1}(A^{n-1}) \ar[r] & \cdots}%
\]
and the exactness of the bottom row implies $d(x)=0$. And therefore, $\phi
_{C}(x)=0$.\newline\textit{(Case B)} Suppose the ideal is $I:=I_{1}^{-}$.
Since for $\Lambda$ we may take \textit{any} decomposition $x=x^{+}+x^{-}$
with $x^{\pm}\in I_{1}^{\pm}$, we may just pick $x^{+}:=0$. Thus, $\phi
_{C}(x)=0$.\newline\textit{(Case C)} Suppose the ideal is $I:=I_{i}^{s}$ for
$i\geq2$ and $s\in\{+,-\}$. Then apply the first $i-1$ maps \textquotedblleft%
$d$\textquotedblright\ in Definition \ref{def_thePhiC}, and observe that its
value lies in $HH_{n-(i-1)}(A^{n-(i-1)}\cap I_{i}^{+})$, but by the inductive
nature of the definition this means that the value lies in the ideal
$I_{1}^{+}$ for the $(n-i+1)$-fold cubically decomposed algebra $A^{n-(i-1)}$,
and thus the above Cases A or B apply to this element. Again, we obtain zero.
\end{proof}

Note that this proof is so simple because of the inductive nature of
Definition \ref{def_thePhiC}. The next vanishing statement is a little more refined.

\begin{theorem}
[Cube Reciprocity Law]\label{thm_multreclaw}Let $(A,(I_{i}^{\pm}))$ be a
unital $n$-fold cubically decomposed algebra with local units on all levels.
Let $P^{\pm}\in A$ be idempotents such that%
\[
P^{+}+P^{-}=1\qquad\text{and}\qquad P^{\pm}A\in I_{1}^{\pm}\text{.}%
\]
If $R\subseteq A$ is a sub-algebra such that $P^{+}A$ (or $P^{-}A$) is a left
$R$-submodule of $A$, then%
\[
\phi_{C}(r)=0
\]
for all $r\in HH_{n}(R)$.
\end{theorem}

\begin{proof}
\textit{(Case A)} Suppose $P^{+}A$ is a left $R$-submodule. We define a
$k$-linear map of Hochschild groups $\psi:C_{i}(R)\rightarrow C_{i}(A)$,
$R^{\otimes i+1}\rightarrow A^{\otimes i+1}$ by%
\[
r_{0}\otimes\cdots\otimes r_{i}\longmapsto r_{0}P^{+}\otimes\cdots\otimes
r_{i}P^{+}\text{.}%
\]
We note that the map $r\mapsto rP^{+}$ would have no reason to be an algebra
homomorphism from $R$ to $A$, so we cannot just induce the above map from a
morphism of algebras. Instead, we need to check that the above describes a
morphism of complexes by hand. We compute%
\begin{align*}
b(\psi(r_{0}\otimes\cdots\otimes r_{i}))  & =\sum_{j=0}^{i-1}(-1)^{j}%
r_{0}P^{+}\otimes\cdots\otimes r_{j}P^{+}r_{j+1}P^{+}\otimes\cdots\otimes
r_{i}P^{+}\\
& +(-1)^{i}r_{i}P^{+}r_{0}P^{+}\otimes r_{1}P^{+}\otimes\cdots\otimes
r_{i-1}P^{+}\text{.}%
\end{align*}
Since the image of $P^{+}$ is a left $R$-module, $r_{j+1}P^{+}\in
\operatorname*{im}P^{+}$, and thus $P^{+}r_{j+1}P^{+}=r_{j+1}P^{+}$, and then
$r_{j}P^{+}r_{j+1}P^{+}=r_{j}r_{j+1}P^{+}$. Thus, we get%
\begin{align*}
b(\psi(r_{0}\otimes\cdots\otimes r_{i}))  & =\sum_{j=0}^{i-1}(-1)^{j}%
r_{0}P^{+}\otimes\cdots\otimes r_{j}r_{j+1}P^{+}\otimes\cdots\otimes
r_{i}P^{+}\\
& +(-1)^{i}r_{i}r_{0}P^{+}\otimes r_{1}P^{+}\otimes\cdots\otimes r_{i-1}%
P^{+}\\
& =\psi b(r_{0}\otimes\cdots\otimes r_{i})\text{.}%
\end{align*}
Thus, $\psi\circ b=b\circ\psi$ and we conclude that $\psi$ is a morphism of
complexes. Next, note that for any $a\in A$, we have $a=aP^{+}+aP^{-}$ with
$aP^{\pm}\in I_{1}^{\pm}$. It follows that our map $\psi$ is a lift of
$\Lambda$, i.e. the diagram%
\begin{equation}%
\xymatrix{
& & HH_{m}(R) \ar[d]_{\Lambda} \ar@{.>}[dl]_{\psi} \\
\cdots\ar[r] & HH_{m}(A^{n}) \ar[r]^-{\operatorname{quot}} & HH_{m}%
(A^{n}/A^{n-1}) \ar[r]^-{\delta} & HH_{m-1}(A^{n-1}) \ar[r] & \cdots}%
\label{lruis_3}%
\end{equation}
commutes. As in the previous proof, the exactness of the row implies that
$d(r)=0$.\newline\textit{(Case B)} Now assume $P^{-}A$ is a left $R$-submodule
of $A$ instead. We define $\psi$ as before, just replacing each $P^{+}$ by
$P^{-}$. Everything goes through, with the exception that $\psi$ now lifts
$x\mapsto x^{-}$ instead of $x\mapsto x^{+}$. However, since $P^{+}+P^{-}=1$,
we can replace Diagram \ref{lruis_3} by%
\[%
\xymatrix{
& & HH_{m}(R) \ar[d]_{\Lambda} \ar@{.>}[dl]_{\iota- \psi} \\
\cdots\ar[r] & HH_{m}(A^{n}) \ar[r]^-{\operatorname{quot}} & HH_{m}%
(A^{n}/A^{n-1}) \ar[r]^-{\delta} & HH_{m-1}(A^{n-1}) \ar[r] & \cdots}%
\]
where $\iota$ is the inclusion of algebras $\iota:R\hookrightarrow A$ (this is
an algebra homomorphism). Thus, again $\Lambda$ lifts and we obtain $d(r)=0$.
\end{proof}

\subsection{Applications of the cube reciprocity law}

\begin{example}
[Curves, Local Theory]\label{Example_Curves_LocalTheory}Let $k$ be a field and
$X/k$ an integral curve. Write $\eta$ for its generic point. Suppose $x\in X$
is a closed point. Then the ad\`{e}le%
\[
A(\eta>x)=\prod_{i}\widehat{\mathcal{K}}_{i}%
\]
is a finite product of $1$-local fields with residue fields finite over $k$.
The number of factors in the product agrees with the number of preimages of
the point $x$ in the normalization of the curve $X^{\prime}\rightarrow X$. If
$X/k$ is regular, there is always just one factor, as in
\S \ref{marker_ForCurves}. Example \ref{example_exceptionaladelefibering}
demonstrates the effect of a singular point. Following our formalism, we get
an abstract residue symbol%
\begin{equation}
\operatorname*{res}\nolimits_{\widehat{\mathcal{K}}_{i}}:\Omega_{\widehat
{\mathcal{K}}_{i}/k}^{1}\longrightarrow HH_{1}(A)\overset{\phi_{C}%
}{\longrightarrow}k\text{.}\label{lruis_2}%
\end{equation}
\newline Now write%
\begin{equation}
\widehat{\mathcal{K}}_{i}=\widehat{\mathcal{O}}_{i}\oplus B\text{,}%
\qquad\text{(as Tate vector spaces)}\label{lruis_1}%
\end{equation}
where $\widehat{\mathcal{O}}_{i}$ is the ring of integers of $\widehat
{\mathcal{K}}_{i}$ (this need not agree with $\widehat{\mathcal{O}}_{X,x}$ if
$x$ did not lie in the smooth locus; rather it would be a finite ring
extension; it always agrees with $\widehat{\mathcal{O}}_{X^{\prime},x^{\prime
}}$, where $x^{\prime}$ is the chosen preimage of $x$ in the normalization
$X^{\prime}$), and $B$ is any $k$-vector space complement. As $\widehat
{\mathcal{O}}_{i}$ is a lattice of the Tate vector space, let $P^{\pm}$ be the
idempotents underlying the direct sum decomposition of Equation \ref{lruis_1}.
Then $\widehat{\mathcal{O}}_{i}\hookrightarrow\widehat{\mathcal{K}}_{i}$ is a
sub-algebra such that $P^{+}A$ is a left-$\widehat{\mathcal{O}}_{i}$-module
(this is true because $P^{+}$ maps everything to $\widehat{\mathcal{O}}%
_{i}\subseteq\widehat{\mathcal{K}}_{i}$, and if we act on $\widehat
{\mathcal{O}}_{i}$ by multiplication with an element $f\in\widehat
{\mathcal{O}}_{i}$, this still lies in $\widehat{\mathcal{O}}_{i}$, and
therefore applying $P^{+}$ again acts as the identity). Hence, by the cube
reciprocity law $HH_{1}(\widehat{\mathcal{O}}_{i})\rightarrow HH_{1}%
(A)\overset{\phi_{C}}{\rightarrow}k$ is the zero map. As a result, we learn
that our residue map in Equation \ref{lruis_2} is trivial on $1$-forms without
poles and factors as $\Omega_{\widehat{\mathcal{K}}_{i}/k}^{1}/\Omega
_{\widehat{\mathcal{O}}_{i}/k}^{1}\rightarrow k$. Of course, this is one of
the most obvious properties the residue map should have. We see here that it
is encoded in\ Theorem \ref{thm_multreclaw}.
\end{example}

\begin{example}
[Curves, Global Theory]We continue the previous example. By Beilinson's
resolution, Theorem \ref{lX_BeilinsonResolutionThm}, we have the flasque
ad\`{e}le resolution of the sheaf $\Omega_{X/k}^{1}$, namely%
\begin{equation}
0\longrightarrow\Omega_{X/k}^{1}\longrightarrow\mathbf{A}_{\Omega^{1}}%
^{(0)}\oplus\mathbf{A}_{\Omega^{1}}^{(1)}\longrightarrow\mathbf{A}_{\Omega
^{1}}^{(01)}\longrightarrow0\text{.}\label{lmfs}%
\end{equation}
Here $\mathbf{A}_{\Omega^{1}}^{(1)}$ denotes the ad\`{e}les running through
all singleton flags $\triangle$ consisting only of closed points
$\{(x)\}_{x\in X_{0}}$, while $\mathbf{A}_{\Omega^{1}}^{(0)}$ denotes the
remaining summand, which agrees with the rational function field $k\left(
X\right)  $ of the curve. Finally, $\mathbf{A}_{\Omega^{1}}^{(01)}$ are the
ad\`{e}les of all length $2$ flags, i.e. those of the shape $(\eta>x)$ for
$\eta$ the generic point and $x$ running through the closed points. The
ad\`{e}les also carry the structure of a cubically decomposed algebra
\cite{bgwTateModule}. One way to see this is by using that they are a $1$-Tate
object, as explained in \cite{bgwTateModule}, and therefore the endomorphism
algebra in the category of Tate vector spaces has a natural structure of a
cubically decomposed algebra, see loc. cit. Feeding this into our abstract
machine, we get a residue symbol on the level of ad\`{e}les,%
\[
\operatorname*{res}\nolimits_{\mathbf{A}}:HH_{1}(\mathbf{A}_{\Omega^{1}%
}^{(01)})\longrightarrow HH_{1}(A)\overset{\phi_{C}}{\longrightarrow}k\text{.}%
\]
Due to the nature of the ad\`{e}les, there is a projection map of $1$-Tate
objects (and rings, simultaneously) $\mathbf{A}_{\Omega^{1}}^{(01)}%
\longrightarrow\widehat{\mathcal{K}}$, where $\widehat{\mathcal{K}}$ is a
local field factor as in the local theory, Example
\ref{Example_Curves_LocalTheory}. As a result, the residue on the ad\`{e}les
is just the sum of the local residues%
\begin{equation}
\operatorname*{res}\nolimits_{\mathbf{A}}((\alpha_{x})_{x})=\sum
\operatorname*{res}\nolimits_{\widehat{\mathcal{K}}}(\alpha_{x})\text{,}%
\label{lruis_4}%
\end{equation}
where $x$ runs through the set of closed points. Thus, we can reduce the
computation of residues to local fields (this is the analogue of \cite[Theorem
3]{MR0227171}). We get two reciprocity laws now: Firstly, $\mathbf{A}%
^{(1)}=\prod_{x\in X}\widehat{\mathcal{O}}_{x}$ is an $\mathbf{A}^{(1)}%
$-submodule of $\mathbf{A}^{(01)}$. We get a direct sum splitting
$\mathbf{A}^{(01)}=\mathbf{A}^{(1)}\oplus B$ and Theorem \ref{thm_multreclaw}
implies that residues of $1$-forms from $\mathbf{A}^{(1)}$ are zero. This is
no real news of course, since this already follows from the local study of
Example \ref{Example_Curves_LocalTheory}. However, we also get a direct sum
splitting $\mathbf{A}^{(01)}=\mathbf{A}^{(0)}\oplus B^{\prime}$, where
$\mathbf{A}^{(0)}=k\left(  X\right)  $ is just the rational function field and
this is a $k\left(  X\right)  $-submodule of $\mathbf{A}^{(01)}$. If $X/k$ is
proper (and only then!), the finite-dimensionality of the cohomology implies
that the assumptions of Theorem \ref{thm_multreclaw} are met: Concretely, we
could write $\mathbf{A}^{(01)}=k\left(  X\right)  \oplus L$ for a suitably
chosen lattice $L$ of the Tate vector space such that, on the level of
$k$-vector spaces, this splitting can be identified with%
\[
\mathbf{A}^{(01)}=k\left(  X\right)  \oplus\underset{\simeq L}{\underbrace
{\frac{\mathbf{A}^{(1)}}{H^{0}(X,\mathcal{O}_{X})}\oplus H^{1}(X,\mathcal{O}%
_{X})}}\text{.}%
\]
This is possible since Theorem \ref{lX_BeilinsonResolutionThm} (applied to
$\mathcal{O}_{X}$) implies that $H^{0}(X,\mathcal{O}_{X})=\mathbf{A}^{(0)}%
\cap\mathbf{A}^{(1)}$ and $H^{1}(X,\mathcal{O}_{X})$ is isomorphic to the
cokernel of $\mathbf{A}^{(0)}+\mathbf{A}^{(1)}$ inside $\mathbf{A}^{(01)}$.
Since both cohomology groups are finite-dimensional $k$-vector spaces, $L$ is
indeed a lattice. Thus, Theorem \ref{thm_multreclaw} tells us that global
rational $1$-forms have vanishing global residue $\operatorname*{res}%
\nolimits_{\mathbf{A}}$. By the global-local formula, Equation \ref{lruis_4},
we conclude the following famous fact: For any global rational $1$-form
$\omega\in\Omega_{X/k}^{1}\left(  X\right)  \otimes k\left(  X\right)  $, the
sum of residues is zero, i.e.
\[
\sum_{x\in X_{(0)}}\operatorname*{res}\nolimits_{\widehat{\mathcal{K}}_{x}%
}(\omega)=0\text{.}%
\]
This is the analogue of \cite[\S 3, Corollary]{MR0227171}, and of course
properness enters our argument in exactly the r\^{o}le as in his paper. If $f$
is a global rational function, $\mathrm{d}\log(f)=\mathrm{d}f/f$ is such a
rational $1$-form and we learn that the total sum of orders of zeros and poles
is zero (when being added up in $k$; so if $\operatorname*{char}(k)>0$ this
statement is not as strong as it could be).
\end{example}

\begin{example}
[Less standard fact]Suppose we are in the situation of Example
\ref{Example_Curves_LocalTheory}. Instead of Equation \ref{lruis_1}, we also
have a direct sum splitting $\widehat{\mathcal{K}}_{i}=\kappa\lbrack
t^{-1}]\oplus t\kappa\lbrack\lbrack t]]$, where we have chosen, for the sake
of exposition, an isomorphism $\widehat{\mathcal{K}}_{i}\simeq\kappa((t))$.
Note that $\kappa\lbrack t^{-1}]$ is also a sub-algebra such that the
Multiplicative Reciprocity Law applies. It tells us that $\operatorname*{res}%
(t^{-n}\mathrm{d}t^{-m})=0$ for all $n,m\geq0$. While one finds this fact
rarely articulated, it is of course also easy to show using the usual calculus
of differentials: $t^{-n}(\mathrm{d}t^{-m})=t^{-n}(-mt^{-m-1}\mathrm{d}%
t)=-mt^{-n-m-1}\mathrm{d}t$. For $n,m\geq0$ this visibly (from the usual
perspective) can only have non-zero residue if $n=m=0$, but then this
expression is zero thanks to the leading coefficient $m$.
\end{example}

So far, we have only used the Cube Reciprocity\ Law to establish statements in
dimension one. We shall address the higher-dimensional story in a sequel.

\section{The bigger picture}

In this paper, we have first tried to explain the construction of the residue
map in \cite{MR565095}. Loc. cit., Beilinson does this using Lie homology, and
specifically relative Lie homology. This word never appears in \cite{MR565095}%
, but we hope to have elucidated why and how this shows up in
\S \ref{section_Etiology}. The essence of the construction lies in%
\[
\phi_{Beil}:H_{n+1}(\mathfrak{g},k)\overset{\sim}{\longrightarrow}%
H_{n+1}(CE(\mathfrak{g}))\overset{\text{edge}}{\longrightarrow}E_{0,n+1}%
^{n+1}\underset{d^{-1}}{\overset{\sim}{\longrightarrow}}E_{n+1,1}%
^{n+1}\overset{\text{edge}}{\longrightarrow}H_{1}(^{\wedge}T_{\bullet}%
^{n+1})\overset{\tau}{\longrightarrow}k
\]
of \S \ref{marker_BeilLocalConstruction}. In the present paper, we have
explained how to remove the presence of any relative Lie homology groups by
(a) reformulating the theory in Hochschild homology, and (b) showing that the
above map can (essentially) also be realized by an iterated use of a
\textsl{modified} boundary map $d$,%
\begin{equation}
\phi_{C}:HH_{n}(A)\longrightarrow HH_{0}(I_{tr})\longrightarrow k\text{,}%
\qquad\alpha\mapsto\tau d\circ\cdots\circ d\,\alpha\text{.}\label{lvio5}%
\end{equation}
This is based on writing the cubically decomposed algebra as an iterated
extension, $A^{n-1}\rightarrow A^{n}\rightarrow A^{n}/A^{n-1}$.

As gets developed in joint work with M. Groechenig and J. Wolfson,
\cite{MR3536437}, one can conveniently package the definition of the
ad\`{e}les of a scheme as an object of the category $\mathcal{T}:=\left.
n\text{-}\mathsf{Tate}(\mathsf{Vect}_{f})\right.  $, and then the complicated
definition of the cubically decomposed algebra structure, Definition
\ref{def_HigherAdeleOperatorIdeals}, simplifies to the plain
$\operatorname*{End}\nolimits_{\mathcal{T}}$ in this category. Now, for any
exact sequence of exact categories $\mathcal{C}^{\prime}\hookrightarrow
\mathcal{C}\twoheadrightarrow\mathcal{C}^{\prime\prime}$, one has an induced
long exact sequence in the Hochschild homology of exact categories
\cite{MR1667558}. Joint work with J. Wolfson in the companion paper
\cite{hochschildconiveau} then shows that $\phi_{C}$ agrees with the iterated
use of the boundary map of this long exact sequence. Thus, unlike the $d$ in
line \ref{lvio5}, which is a slightly modified version of a boundary map (by
the modification we refer to the Toeplitz-like twist by $\Lambda$ in Eq.
\ref{lml_40}), the localization sequence boundary map gives the right map on
the nose. Combined with this paper, we thus can follow the entire journey from
Tate's original approach using commutators in \cite{MR0227171}, to Beilinson's
use of relative Lie homology \cite{MR565095}, to Hochschild homology of
non-unital algebras in the present paper, to the Hochschild homology of
categories in \cite{hochschildconiveau}. The latter paper has a new version of
a Hochschild--Kostant--Rosenberg theorem \textit{with supports}, which also
makes a connection to the local cohomology approach of Grothendieck in
\cite{MR0222093}.

\begin{acknowledgement}
The original ideas belong to John Tate and Alexander Be\u{\i}linson. This
paper is just a lengthy interpretation and elaboration of their beautiful
works \cite{MR565095}, \cite{MR0227171}, possibly adding mistakes, but perhaps
a way of expressing appreciation for their inventiveness.\newline I thank Ivan
Fesenko for explaining the problem to me; Michael Groechenig and Jesse Wolfson
for many interesting discussions on the ind-pro approach, which have led to
numerous joint papers; Amnon Yekutieli for his many insightful remarks,
suggestions and careful reading of an earlier version. This has been most
helpful.\medskip\newline Moreover, I thank the Essen Seminar for Algebraic
Geometry and Arithmetic for the excellent working conditions and friendly atmosphere.
\end{acknowledgement}

\bibliographystyle{amsalpha}
\bibliography{ollinewbib}

\end{document}